\begin{document}
\title[Spectral flow and the unbounded Kasparov product]{Spectral flow and the
  unbounded Kasparov product}

\author{Jens Kaad}
\address{Hausdorff Center for Mathematics,
Universit\"at Bonn,
Endenicher Allee 60,
53115 Bonn,
Germany}
\email{jenskaad@hotmail.com}

\author{Matthias Lesch}
\address{Mathematisches Institut,
Universit\"at Bonn,
Endenicher Allee 60,
53115 Bonn,
Germany}

\email{ml@matthiaslesch.de, lesch@math.uni-bonn.de}
\urladdr{www.matthiaslesch.de, www.math.uni-bonn.de/people/lesch}
\thanks{Both authors were supported by the 
Hausdorff Center for Mathematics, Bonn}

\subjclass[2010]{Primary 19K35; Secondary 46H25, 58J30, 46L80}
\keywords{$KK$-theory, operator modules, Kasparov product, spectral flow}
%\date{\today}
\begin{abstract}
We present a fairly general construction of unbounded representatives for
the interior Kasparov product.  As a main
tool we develop a theory of $C^1$-connections on operator $*$ modules; we do
not require any smoothness assumptions; our $\sigma$--unitality assumptions
are minimal. Furthermore, we use work of Kucerovsky
and our recent Local Global Principle for regular operators in Hilbert
$C^*$--modules.

As an application we show that the Spectral Flow Theorem 
and more generally the index theory of Dirac-Schr\"odinger operators can be
nicely explained in terms of the interior Kasparov product.
\end{abstract}

\maketitle
\tableofcontents
%\listoffigures
%}}}

%%  MAINMATTER %% {{{
%% Sec 1 %%%%%%%%%%%%%%%%%%%%%%%%%%%%%%%%%%%%%%%%%%%%%%%%%%%%%%%%%%%%%%%%%%%%%%
\section{Introduction}\label{s:intro}     %%%%%%%%%%%%%%%%%%%%%%%%{{{
The Spectral Flow Theorem (\cite{RobSal:SFM}, or quite recently
\cite{GesEtAl:IFS}) relates the spectral flow of a family, $A(x)$, of
unbounded selfadjoint Fredholm operators to the index of the Fredholm operator
$D=\frac{d}{dx} + A(x)$. $D$ is an example of a so called Dirac-Schr\"odinger
operator on the complete manifold $\R$. Index theorems for such operators, at
least in the special case where $A(x)$ is a finite rank bundle morphism, were
established in the 80s and 90s, \eg Anghel \cite{Ang:ICO}, \cite{Ang:AIT}
or \cite[Chap.~IV]{Les:OFT} and the references therein.

The family $\{A(x)\}_{x \in \rr}$ naturally defines a class $[F_1]$ in the
first $K$-theory group of $C_0(\rr)$, while the Dirac-operator
$-i\frac{d}{dx}$ defines a class $[F_2]$ in the first $K$-homology group of
the same $C^*$-algebra. It follows from $KK$-theory that the spectral flow of
$A(x)$ can be recovered as the Kasparov product of the classes $[F_1]$ and
$[F_2]$ via the natural identification $KK(\cc,\cc) \cong \zz$, \cf
\cite[Sec.~18.10]{Bla:KTO2Ed}. On the other hand classes in $KK(\cc,\cc)$ 
are represented by Fredholm operators and therefore the Spectral Flow Theorem
can be rephrased in the following way: The Dirac-Schr\"odinger operator 
$D = \frac{d}{dx} + A(x)$, viewed as an unbounded $\cc$-$\cc$ Kasparov module
represents the interior Kasparov product of $[F_1] \in K_1(C_0(\rr))$ and
$[F_2] \in K^1(C_0(\rr))$. 

It is tempting to generalize this pattern by replacing the real line by a
complete Riemannian manifold. The family then becomes parametrized by the
manifold whereas a Dirac operator (or, slightly more generally, a first order elliptic
operator with bounded propagation speed) on the complete manifold naturally replaces
$-i\frac{d}{dx}$. For the realization of this program it turns out that the
existing theories of unbounded representatives for the $KK$-product, see \eg
\cite{BaaJul:TBK}, \cite{Kuc:KKP},  \cite{Mes:UBK}), do not suffice. It is the purpose of
this paper to establish an appropriate improvement of unbounded $KK$-theory
which naturally covers Dirac-Schr\"odinger operators on complete manifolds. 

In the paper \cite{Mes:UBK} Mesland develops a framework of smooth algebras and
differentiable $C^*$-modules equipped with smooth connections with the purpose
of establishing a general formula for the unbounded $KK$-product. We pursue a
less technical approach: Since unbounded Kasparov modules are abstractions of
\emph{first order} elliptic differential operators it is most natural to
impose $C^1$-conditions on both modules and connections. It turns out that the
theory of operator modules and complete boundedness provides a good operator
algebraic framework for treating such concepts.

More concretely, let us fix a pair of unbounded (odd) Kasparov modules
$(X,D_1)$ and $(Y,D_2)$ over $C^*$-algebras $A$-$B$ and $B$-$C$, respectively.
The $C^*$-algebra $B$ then possesses a dense operator $*$-algebra (\cf Section
\ref{s:OStarM}) $B_1$ which is the largest algebra for which the unbounded
derivation defined by $D_2$ yields bounded adjointable operators. This
operator $*$-algebra defines a $C^1$-structure on the $C^*$-algebra $B$. By
Kasparov's stabilization theorem \cite{Kas:HCM},
\cite[Sec.~13.6.2]{Bla:KTO2Ed}, \cite[Sec.~6.2]{Lan:HCM} the Hilbert
$C^*$-module $X$ is a direct summand in the standard module $B^\infty$ over
$B$. Let $P \in \B (B^\infty)$ denote the projection with $PB^\infty \cong X$.
We say that $X$ has a $C^1$-structure if the projection $P$ descends to a
\emph{completely bounded projection} (see \eg \cite{ChrSin:RCB}) on the
standard module $B_1^\infty$ over the operator $*$-algebra $B_1$. The image
$PB_1^\infty \su PB^\infty$ is an operator $*$-module over $B_1$. A
$C^1$-structure on $X$ gives rise to a Gra{\ss}mann $D_2$-connection
$\Gc_{D_2}$ (Def.  \ref{def:LCConn}, \cf \cite[p.~600]{Con:CAG}) which is an
essential ingredient for the construction of the unbounded
$KK$-product.

To formulate our main result we need to introduce one more technical device,
namely that of a correspondence between $(X,D_1)$ and $(Y,D_2)$ (Def.
\ref{def:Correspondence}). Roughly speaking a correspondence from $(X,D_1)$ to $(Y,D_2)$ is a pair
$(X_1,\Na^0)$ consisting of the operator $*$--module $PB_1^\infty$ over $B_1$
and a hermitian $D_2$--connection $\Na^0_{D_2} : PB_1^\infty \to X \hot_B
\B(Y)$ such that
\begin{enumerate}
 \item\label{I:itemA} The commutator $[1 \ot_{\Na^0} D_2,a] : \sD(1 \ot_{\Na^0} D_2) \to X
  \hot_B Y$ is well--defined and extends to a bounded operator on $X \hot_B Y$
  for all $a \in A_1$.
 \item\label{I:itemB} The unbounded operator
\[
[D_1\ot 1, 1 \ot_{\Na^0} D_2](D_1 \ot 1 - i \cd \mu)^{-1} : \sD(1 \ot_{\Na^0}
D_2) \to X \hot_B Y
\]
is well--defined and extends to a bounded operator on $X \hot_B Y$, for
all $\mu\in\Rstar$.
\end{enumerate}\mpar{I think I can prove that it suffices to check $\mu=\pm
i$, but, similarly to the deficiency indices, $i$ or $-i$ alone does not
suffice}

Note that \eqref{I:itemB} is less restrictive than the commutator conditions
imposed by Mesland in \cite[Definitions 4.9.1 and 4.9.5]{Mes:UBK}; the latter
in particular imply the boundedness of the commutator 
$[D_1 \ot 1,1 \ot_{\Na^0} D_2]$. Our weaker condition of \emph{relative
boundedness} of the commutator is already needed for the Spectral Flow Theorem
over the real line in the context of \cite{RobSal:SFM}.

The main result of this paper can then be stated as follows:
\begin{theorem}\label{I:ThmA}
Let $(X,D_1)$ and $(Y,D_2)$ be two odd unbounded Kasparov modules for $(A,B)$
and $(B,C)$ respectively. Suppose that there exists a correspondence
$(X_1,\Na^0)$ from $(X,D_1)$ to $(Y,D_2)$. Let 
$\Na_{D_2} : X_1 \to X \hot_B \B(Y)$ be any hermitian $D_2$--connection. Then the pair 
$(D_1 \times_{\Na} D_2, (X \hot_B Y)^2)$ is an even unbounded Kasparov $A$--$C$
module which represents the interior Kasparov product of $(X,D_1)$ and 
$(Y,D_2)$. 
\end{theorem}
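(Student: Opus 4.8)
The plan is to verify the hypotheses of Kucerovsky's recognition criterion \cite{Kuc:KKP}: once $(D_1 \times_\Na D_2, (X \hot_B Y)^2)$ is known to be an even unbounded Kasparov $A$--$C$ module, it represents $[(X,D_1)] \hot_B [(Y,D_2)]$ as soon as (i) for every $\xi$ in a dense submodule of $X$ (we take $\xi \in X_1 = PB_1^\infty$) the operator $\left(\begin{smallmatrix} 0 & T_\xi \\ T_\xi^* & 0 \end{smallmatrix}\right)$, with $T_\xi\colon Y \to X \hot_B Y$, $T_\xi\eta = \xi \hot \eta$, preserves $\sD(D_1 \times_\Na D_2) \oplus \sD(D_2)$ and is bounded there for the graph norm; (ii) $\sD(D_1 \times_\Na D_2) \su \sD(D_1 \ot 1)$; and (iii) there is $c > 0$ with $\langle (D_1 \ot 1) x, (D_1 \times_\Na D_2) x \rangle + \langle (D_1 \times_\Na D_2) x, (D_1 \ot 1) x \rangle \ge -c \langle x, x \rangle$ for $x$ in a core of $D_1 \times_\Na D_2$. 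Accordingly I would split the argument into a reduction to the distinguished connection $\Na^0$ of the correspondence, a proof that the pair is an unbounded Kasparov module, and the verification of (i)--(iii).

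\textbf{Reduction to $\Na = \Na^0$.}
First I would note that any two hermitian $D_2$--connections $\Na, \Na^0$ on $X_1$ differ by a completely bounded module map $X_1 \to X \hot_B \B(Y)$, so that $1 \ot_\Na D_2 = 1 \ot_{\Na^0} D_2 + S$ for an operator $S \in \B(X \hot_B Y)$ which is self-adjoint and whose commutators with $D_1 \ot 1$ and with $A_1$ are again bounded. Hence $D_1 \times_\Na D_2$ is a bounded self-adjoint perturbation of $D_1 \times_{\Na^0} D_2$ that respects the grading, and such a perturbation preserves self-adjointness, regularity, local compactness, boundedness of commutators with $A_1$, and the Kasparov product class (the straight-line path of operators being an operator homotopy of unbounded Kasparov modules). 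It therefore suffices to treat $\Na = \Na^0$, which is exactly where conditions \eqref{I:itemA} and \eqref{I:itemB} of the correspondence are available.

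\textbf{The Kasparov module property.}
Up to the standard $2 \times 2$ matrix (Clifford) construction, $D := D_1 \times_{\Na^0} D_2$ is the odd self-adjoint operator built from $D_1 \ot 1$ and $1 \ot_{\Na^0} D_2$, carried by a natural common core inside $\sD(D_1 \ot 1) \cap \sD(1 \ot_{\Na^0} D_2)$. For regularity and self-adjointness I would invoke our Local--Global Principle, which reduces the claim to essential self-adjointness of the localisation of $D$ at each state of $C$ --- a Hilbert-space statement, in which $1 \ot_{\Na^0} D_2$ localises to a self-adjoint operator (because $\Na^0$ is hermitian and $(Y,D_2)$ a Kasparov module) and $D_1 \ot 1$ is added as a symmetric perturbation; here condition \eqref{I:itemB} supplies precisely the relative-boundedness input needed to run the commutator-with-resolvent estimates and to see that the chosen core is genuinely a core. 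Local compactness, i.e.\ compactness of $a(1 + D^2)^{-1/2}$ for $a \in A$, should follow from the customary estimate showing $(1+D^2)^{-1/2}$ comparable to $\big((1 + (D_1 \ot 1)^2)(1 + (1 \ot_{\Na^0} D_2)^2)\big)^{-1/4}$ --- again via \eqref{I:itemB} --- together with the $B$-compactness of $(1+D_1^2)^{-1/2}$ and $(1+D_2^2)^{-1/2}$ and the $C^1$-structure on $X$ (the completely bounded projection $P$ on $B_1^\infty$). Finally, for $a$ in the dense operator $*$-algebra $A_1$ the commutator $[D,a]$ is bounded because $[D_1 \ot 1, a] = [D_1,a] \ot 1$ is bounded (by the Kasparov module $(X,D_1)$) and $[1 \ot_{\Na^0} D_2, a]$ is bounded by condition \eqref{I:itemA}.

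\textbf{Kucerovsky's conditions and the main obstacle.}
Condition (i) I would read off from the construction of $1 \ot_{\Na^0} D_2$ through the Gra{\ss}mann $D_2$--connection $\Gc_{D_2}$ (Definition \ref{def:LCConn}): boundedness of $\left(\begin{smallmatrix} 0 & T_\xi \\ T_\xi^* & 0 \end{smallmatrix}\right)$ on the relevant domain is precisely the $C^1$-connection property, with hermiticity controlling the $T_\xi^*$-entry. Condition (ii) is built into the previous step, $D$ having been realised as an extension of $D_1 \ot 1 + i(1 \ot_{\Na^0} D_2)$ together with a graph-norm domination of $D_1 \ot 1$ coming from \eqref{I:itemB}. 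For (iii) I would expand the anticommutator: the $D_1 \ot 1$ diagonal term equals $\|(D_1 \ot 1) x\|^2 \ge 0$, while the term carrying $1 \ot_{\Na^0} D_2$, after commuting $D_1 \ot 1$ past $1 \ot_{\Na^0} D_2$, is bounded below by a multiple of $-\big\| [D_1 \ot 1, 1 \ot_{\Na^0} D_2](D_1 \ot 1 - i \mu)^{-1} \big\|$ times a quadratic form dominated by $\|x\|^2 + \varepsilon \|(D_1 \ot 1)x\|^2$, with $|\mu|$ large enough to absorb the $\varepsilon$-term. I expect the main obstacle to lie here and in the previous step: the commutator $[D_1 \ot 1, 1 \ot_{\Na^0} D_2]$ is only \emph{relatively} bounded, unlike in \cite{Mes:UBK}, so one must show that this weaker hypothesis still forces essential self-adjointness and regularity of $D_1 \ot 1 + i(1 \ot_{\Na^0} D_2)$ on the natural core and still yields the one-sided estimate (iii), without ever inverting $1 \ot_{\Na^0} D_2$; the delicate analysis should be organised around the bounded operators $[D_1 \ot 1, 1 \ot_{\Na^0} D_2](D_1 \ot 1 - i\mu)^{-1}$ of \eqref{I:itemB}, whose norms become small for $|\mu|$ large. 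Once (i)--(iii) are established, Kucerovsky's theorem identifies the class of $(D, (X \hot_B Y)^2)$ with $[(X,D_1)] \hot_B [(Y,D_2)]$, and by the reduction step the same holds for $D_1 \times_\Na D_2$.
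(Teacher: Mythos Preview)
Your overall architecture matches the paper's: reduce to $\Na = \Na^0$ via Proposition~\ref{p:compself}, establish that $(D_1 \times_{\Na^0} D_2, (X \hot_B Y)^2)$ is an unbounded Kasparov module, and then verify the conditions of Theorem~\ref{t:kuceodd}. There is, however, a genuine gap in your verification of Kucerovsky's connection condition. The dense set you propose, $X_1$, does not work: for $\xi \in X_1$ there is no reason to have $\xi \in \sD(D_1)$, so $T_\xi(\sD(D_2))$ need not land in $\sD(D_1 \ot 1)$ and the commutator with $D$ is not even well-defined. The paper instead uses $\sF := \pi_1(\sA)\cd (D_1 - i)^{-1}(X_1)$; the factor $(D_1-i)^{-1}$ forces $\sF \subset \sD(D_1)$ and renders $(D_1 \ot 1) T_x$ bounded, while the correspondence conditions (3) and (4) of Definition~\ref{def:Correspondence} are precisely what is needed to push $a$ and $(D_1-i)^{-1}\ot 1$ through $\sD(1 \ot_{\Na^0} D_2)$ and to bound the resulting commutators (Lemmas~\ref{domaininclus} and~\ref{l:Cond1}). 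Note also that $\sF$ is dense in $A\cd X$, as Kucerovsky requires, not merely in $X$.

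Your route to compactness of the resolvent also diverges from the paper and is the more fragile part of your sketch. A comparability estimate of the type $(1+D^2)^{-1/2} \sim \bigl((1+(D_1\ot 1)^2)(1+(1\ot_{\Na^0} D_2)^2)\bigr)^{-1/4}$ is standard when the two legs anticommute but is not straightforward under mere \emph{relative} boundedness of the commutator. The paper sidesteps this entirely: Theorem~\ref{t:KaaLes1} (which you should simply cite for self-adjointness and regularity rather than rerunning the Local--Global argument) yields continuous inclusions $\sD(D) \hookrightarrow \sD(D_1 \ot 1)$ and $\sD(D) \hookrightarrow \sD(1 \ot_\Na D_2)$. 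One first proves that $(K \ot 1)(1 \ot_\Na D_2 - i)^{-1}$ is $C$-compact for every $B$-compact $K$ (Proposition~\ref{komright}), then approximates $\pi(a) \circ \iota$ by $(\te_m \ot 1)\pi(a) \circ \iota$ with $\{\te_m\}$ an approximate unit in $\cK(X)$: the second inclusion gives compactness of each approximant, while the first, combined with $a(D_1-i)^{-1} \in \cK(X)$, gives norm convergence (Theorem~\ref{t:prodmod}). Likewise, condition (iii) follows directly from the quadratic estimate \cite[Lemma~7.6]{KaaLes:LGP} rather than a fresh anticommutator expansion.
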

Here $D_1 \times_{\Na} D_2$ is essentially the operator $D_1\ot 1 \pm i\, 1\ot_\Na
D_2$, see \Eqref{eq:UnbProdOp} below.
Theorem \ref{I:ThmA} is a combination of Theorem \ref{t:prodmod} and Theorem
\ref{t:prodcoincide}.

Let us briefly outline the above mentioned application to Dirac-Schr\"odinger
operators.  Let $M$ be a complete oriented manifold of dimension $m$ and
$\{D_1(x)\}_{x \in M}$ a family of unbounded selfadjoint operators
parametrized by the manifold. We assume that the domain $W:=\dom(D_1(x))$ is
independent of $x$ and that the graph norms of the family are uniformly
equivalent.  Furthermore, the map $D_1 : M \to \B(W,H)$ is assumed to be
weakly differentiable with uniformly bounded derivative; see Subsection
\ref{ss:exdirschr} for the precise formulation. On top of these conditions we
will require that the inclusion $\io: W \to H$ is \emph{compact} and that the
spectra of $D_1(x), x\in M, $ are uniformly bounded away from zero outside a
compact set $K \su M$. These conditions are essentially those required by
Robbin and Salomon in the one-dimensional scenario, \cite[A1-A3]{RobSal:SFM}. On
the other hand, we let $D_2 : \Ga_c^\infty(M,F) \to L^2(M,F)$ be a first order
formally selfadjoint elliptic differential operator with bounded propagation
speed; Here $F$ is a hermitian vector bundle over $M$.

\begin{theorem}\label{I:ThmB}
Suppose that the conditions outlined before are satisfied. The
Dirac--Schr\"odinger operator $\la D_1(x) + i D_2 : \sD(D_1(x)) \cap \sD(D_2)
\to L^2(M,H \ot F)$ is an unbounded Fredholm operator for $\la > 0$ large
enough. Furthermore, its Fredholm index coincides with the integer given by
the interior $KK$--product $[D_1(\cdot)] \hot_{C_0(M)} [D_2] \in KK(\cc,\cc)$
under the canonical identification $KK(\cc,\cc) \cong \zz$.
\end{theorem}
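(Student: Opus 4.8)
The strategy is to exhibit the family $\{D_1(x)\}_{x\in M}$ and the elliptic operator $D_2$ as a pair of odd unbounded Kasparov modules to which Theorem \ref{I:ThmA} applies, and then to identify the resulting product operator with the Dirac--Schr\"odinger operator $\lambda D_1(x)+iD_2$. First I would check that $[D_1(\cdot)]$ defines an odd unbounded Kasparov module $(X,D_1)$ for $(\cc,C_0(M))$: here $X$ is the continuous field of Hilbert spaces $H$ over $M$ (completed to a Hilbert $C_0(M)$-module), the selfadjoint regular operator $D_1$ acts fibrewise, regularity and the resolvent being a well-defined adjointable operator follow from the uniform graph-norm equivalence, and the compactness of $\io\colon W\to H$ together with the spectral gap outside the compact set $K$ gives that $(D_1\pm i)^{-1}$ is a compact operator on $X$ — this is exactly where conditions A1--A3 of \cite{RobSal:SFM} are used. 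Likewise $[D_2]$ is the standard odd unbounded Kasparov module for $(C_0(M),\cc)$ attached to a first order formally selfadjoint elliptic operator with bounded propagation speed; essential selfadjointness, regularity and the compactness of $a(D_2\pm i)^{-1}$ for $a\in C_0(M)$ are classical (Gaffney, finite propagation speed, elliptic estimates).

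Next I would construct a correspondence $(X_1,\Na^0)$ from $(X,D_1)$ to $(Y,D_2)$ in the sense of Definition \ref{def:Correspondence}. The natural choice of operator $*$-algebra $(C_0(M))_1$ is a suitable algebra of $C^1$-functions on $M$ (those for which the derivation associated with $D_2$ — essentially Clifford multiplication by $df$ — is bounded adjointable); the $C^1$-structure on $X=H\otimes C_0(M)$ is inherited from the trivial bundle structure, and the hermitian $D_2$-connection $\Na^0$ can be taken to be the Grassmann connection coming from this trivialisation, which on the relevant core is the exterior derivative followed by the symbol of $D_2$. I then have to verify conditions \eqref{I:itemA} and \eqref{I:itemB}. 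Condition \eqref{I:itemA} — boundedness of $[1\otimes_{\Na^0}D_2,a]$ for $a$ in the dense subalgebra of $\cc$, hence trivially — is essentially vacuous on the $A$-side since $A=\cc$. Condition \eqref{I:itemB} is the substantive one: one must show that $[D_1\otimes 1,\,1\otimes_{\Na^0}D_2](D_1\otimes 1-i\mu)^{-1}$ extends boundedly. Here $D_1\otimes 1$ is the fibrewise family and $1\otimes_{\Na^0}D_2$ is (up to lower order terms) the elliptic operator $D_2$ acting in the $F$-variable; their commutator involves the $x$-derivative of the family $D_1$, which by hypothesis is weakly differentiable with \emph{uniformly bounded} derivative $\dot D_1(x)\colon W\to H$. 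Composing with $(D_1(x)-i\mu)^{-1}\colon H\to W$ (bounded uniformly in $x$ by the graph-norm estimate) yields a bounded operator — this is precisely the \emph{relative boundedness} of the commutator that the authors emphasise is weaker than Mesland's condition, and it is exactly what the Robbin--Salamon hypotheses on $D_1$ deliver.

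With the correspondence in hand, Theorem \ref{I:ThmA} produces an even unbounded Kasparov $\cc$-$\cc$ module $(D_1\times_\Na D_2,(X\hot_{C_0(M)}Y)^2)$ representing $[D_1(\cdot)]\hot_{C_0(M)}[D_2]$, and by \Eqref{eq:UnbProdOp} the underlying operator is $D_1\otimes 1\pm i(1\otimes_\Na D_2)$. I would then identify $X\hot_{C_0(M)}Y$ with $L^2(M,H\otimes F)$ and the product operator, after the rescaling $D_1\mapsto\lambda D_1$ built into Theorem \ref{I:ThmA}'s hypotheses being stable under it, with $\lambda D_1(x)+iD_2$ for $\lambda>0$ large — largeness of $\lambda$ enters to absorb the relatively bounded error terms and guarantee that the sum is selfadjoint/regular and Fredholm, just as in the one-dimensional case. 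Finally, since a class in $KK(\cc,\cc)$ is represented by a Fredholm operator and the identification $KK(\cc,\cc)\cong\zz$ is by the Fredholm index, the index of $\lambda D_1(x)+iD_2$ equals the integer $[D_1(\cdot)]\hot_{C_0(M)}[D_2]$. The main obstacle I anticipate is the verification of condition \eqref{I:itemB} together with the self-adjointness/Fredholmness of the sum for large $\lambda$: one needs the relatively bounded commutator estimate to interact correctly with the resolvent of $1\otimes_\Na D_2$ so that Kucerovsky's criteria (as packaged in Theorem \ref{I:ThmA}) genuinely apply, and the completeness of $M$ (entering through finite propagation speed and essential selfadjointness of $D_2$) must be used carefully in place of the properness available on $\R$.
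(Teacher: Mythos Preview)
Your overall architecture is right, but there is a genuine gap in the first step that propagates through the argument.

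You claim that $(X,D_1)$ with $X=C_0(M,H)$ is an odd unbounded Kasparov $\cc$--$C_0(M)$ module because ``the compactness of $\io\colon W\to H$ together with the spectral gap outside the compact set $K$ gives that $(D_1\pm i)^{-1}$ is a compact operator on $X$.'' This is not correct. The compact operators on $C_0(M,H)$ are $C_0(M,\cK(H))$; while $(D_1(x)-i)^{-1}\in\cK(H)$ for each $x$ (from the compact inclusion $W\hookrightarrow H$), the assumption \Aref{Ass3} only bounds the spectrum of $D_1(x)$ away from zero outside $K$ --- it does \emph{not} force $\|(D_1(x)-i)^{-1}\|\to 0$ at infinity. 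So in general $(D_1-i)^{-1}\in C_b(M,\cK(H))\setminus C_0(M,\cK(H))$ and the resolvent is not $C_0(M)$-compact. The paper fixes this by replacing $D_1$ with $\psi^{-1}D_1$ for a strictly positive $\psi\in C_0^1(M)$ with $\psi|_K=1$: then $(\psi^{-1}D_1-i)^{-1}=\psi(D_1-i\psi)^{-1}$, and the factor $\psi$ supplies the decay at infinity. One then checks (and this is not automatic) that the class $[D_1]$ so obtained is independent of the choice of $\psi$.

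This modification has a knock-on effect on your verification of condition \eqref{I:itemB}. With $\psi^{-1}D_1$ in place of $D_1$ one has
\[
[\psi^{-1}D_1,D_2]=-\si_{D_2}(d\psi^{-1})\,D_1+\psi^{-1}[D_1,D_2],
\]
so besides the $dD_1$-term you correctly isolate there is an extra term $\si_{D_2}(d\psi^{-1})D_1$, which is only controlled after composition with the resolvent if $d\psi^{-1}$ is globally bounded. The paper proves separately (Lemma \ref{l:AppDistFunc}, via a smooth approximation of the distance function) that such a $\psi$ exists; this is a point you would have to supply.

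Finally, Theorem \ref{I:ThmA} applied to $(\psi^{-1}D_1,D_2)$ produces the Fredholm operator $\psi^{-1}D_1(\cdot)+iD_2$, not $\lambda D_1(\cdot)+iD_2$; there is no rescaling ``built into'' the theorem. The passage to the constant-coefficient operator $\lambda D_1+iD_2$ for $\lambda$ large is a separate step: one shows (using \cite[Lemma 7.6]{KaaLes:LGP}) that $\phi\,D_1(\cdot)+iD_2$ is Fredholm for any $C^1$-function $\phi$ with $|d\phi|$ bounded and $\phi$ sufficiently large off $K$, and then invokes stability of the Fredholm index under deformation in the graph topology to connect $\psi^{-1}$ to the constant function $\lambda$. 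Your sentence ``largeness of $\lambda$ enters to absorb the relatively bounded error terms'' gestures at the right mechanism but does not supply this homotopy argument.
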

Theorem \ref{I:ThmB}  is proved in the final Section \ref{s:GE}.
The proof of Theorem \ref{I:ThmA} consists of two main steps. First of
all one needs to prove that the pair $(D_1 \ti_{\Na} D_2, (X \hot_B Y)^2)$ is
an unbounded Kasparov module. This includes the selfadjointness and
\emph{regularity} of the unbounded product operator $D_1 \ti_{\Na} D_2$. Once
this is place we can apply the work of Kucerovsky \cite{Kuc:KKP} which
establishes general criteria for recognizing an unbounded Kasparov module as a
representative of the interior Kasparov product. The selfadjointness and
regularity of the unbounded product operator is handled by the following
result which the authors proved in a predecessor of this paper.

\begin{theorem}[{\cite[Theorem 7.10]{KaaLes:LGP}}]\label{t:KaaLes1}
Let $S$ and $T$ be two selfadjoint and regular operators on a Hilbert
$C^*$--module $E$. Suppose that there exists a core $\sE$ for $T$ such that
the following two conditions are satisfied:
\begin{enumerate}
\item We have the inclusions $(S - i\cd\mu)^{-1}(\xi) \in \sD(S) \cap \sD(T)$ and
  $T(S - i\cd\mu)^{-1}(\xi) \in \sD(S)$ for all $\mu\in\Rstar$ and all $\xi \in \sE$.
\item The unbounded operator $[S,T](S - i\cd\mu)^{-1} : \sE \to E$ extends to a
  bounded operator on $E$ for all $\mu\in\Rstar$.
\end{enumerate}
Then the unbounded anti--diagonal operator
\[
D = \begin{pmatrix}
0 & S - i\, T \\
S + i\, T & 0
\end{pmatrix}
: \bigl( \sD(S) \cap \sD(T) \bigr)^2 \to E^2
\]
is selfadjoint and regular. Here the power refers to the cartesian product
(\textit{i.e.,} direct sum) of modules. In particular, the inclusions
of domains $\sD(D)\hookrightarrow \sD(S), \sD(D)\hookrightarrow \sD(T)$
are continuous.
\end{theorem}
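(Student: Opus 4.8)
The plan is to reduce the statement to a functional-analytic criterion for selfadjointness and regularity of an anti-diagonal operator, and then verify that criterion using the two hypotheses. First I would observe that $D$ is manifestly symmetric on the domain $(\sD(S)\cap\sD(T))^2$, since $S\pm iT$ are mutually adjoint on that domain; the content is therefore to show that $D\pm i$ are surjective (equivalently, that $\operatorname{ran}(D\pm i)$ is all of $E^2$ and $D$ is closed), which by the Local Global Principle of \cite{KaaLes:LGP} may even be checked fiberwise after localizing, though I would prefer to argue directly on the module. Concretely, $(D+i)(\xi,\eta) = (f,g)$ unwinds to the coupled system $(S-iT)\eta + i\xi = f$ and $(S+iT)\xi + i\eta = g$; solving formally gives $\xi$ as the solution of $\bigl((S+iT)(S-iT) + 1\bigr)\xi = $ (something bounded), i.e. one is led to invert $S^2 + T^2 + i[S,T] + 1$. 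The role of hypothesis (2) is precisely to make the perturbation $i[S,T]$ small relative to $S^2+T^2+1$ — more precisely, $[S,T](S-i\mu)^{-1}$ bounded lets one control $[S,T]$ on $\sD(S)$ — so that a Neumann-series or Kato–Rellich type argument on the Hilbert $C^*$-module closes.

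The key steps, in order, would be: (i) set $R := (S-i)^{-1}$ (bounded, adjointable, with dense range $\sD(S)$) and rewrite the target equation, using hypothesis (1) to guarantee that the relevant compositions land in $\sD(S)\cap\sD(T)$ and that $TR$ maps $\sE$ into $\sD(S)$, so all formal manipulations are legitimate on the core; (ii) show that the operator $T R$ extends to a bounded adjointable operator on $E$ — this is where $\sE$ being a core for $T$ together with hypothesis (1) and the closedness of $T$ enters — and that $[S,T]R$ likewise extends boundedly by hypothesis (2); (iii) assemble these into an explicit bounded right parametrix for $D\pm i$, i.e. produce a bounded adjointable operator $Q$ with $(D+i)Q = 1$ on a dense subspace and hence $=1$ everywhere by continuity, giving surjectivity of $D+i$ (and symmetrically $D-i$); (iv) conclude selfadjointness and regularity of $D$ from surjectivity of $D\pm i$ together with symmetry, invoking the standard characterization of selfadjoint regular operators on Hilbert $C^*$-modules (closed, symmetric, with $D\pm i$ surjective); (v) finally read off continuity of $\sD(D)\hookrightarrow\sD(S)$ and $\sD(D)\hookrightarrow\sD(T)$ from the closed graph theorem applied to the bounded inverses, i.e. from the fact that $(S\pm iT)(D\pm i)^{-1}$ and the analogous expressions for $T$ are bounded.

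The main obstacle I anticipate is step (ii)–(iii): controlling the \emph{relative} boundedness of $[S,T]$ all the way to an honest bounded adjointable parametrix, rather than just a bounded operator, and doing so uniformly in the resolvent parameter so that the scaling trick $(S-i\mu)^{-1}$ for large $|\mu|$ can be used to absorb the perturbation. On a Hilbert $C^*$-module one does not have spectral calculus for $T$ in the von Neumann sense, so one cannot simply estimate $\|[S,T](S^2+T^2+1)^{-1}\|$ by spectral projections; instead I expect one must work with the resolvent $(S-i\mu)^{-1}$, show that $T(S-i\mu)^{-1}$ is uniformly bounded as $|\mu|\to\infty$ (this uses the core hypothesis and a graph-norm estimate), and then choose $\mu$ large so that $\|[S,T](S-i\mu)^{-1}\| < 1$, at which point a geometric series inverts the perturbed operator adjointably. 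A secondary subtlety is checking that the parametrix $Q$ is \emph{adjointable} and not merely bounded — here I would either verify this by exhibiting its adjoint explicitly (using that $S,T$ are selfadjoint so the adjoint system is of the same form with the sign of $T$ flipped), or appeal to the Local Global Principle to reduce adjointability and regularity to the already-classical Hilbert-space statement fiberwise.
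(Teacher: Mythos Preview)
This theorem is not proved in the present paper; it is quoted verbatim from the predecessor \cite{KaaLes:LGP} (see the sentence immediately preceding the statement), so there is no in-paper proof to compare against. That said, your sketch contains a genuine error that would prevent it from going through as written.

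The problematic claim is in step (ii): you assert that $T(S-i\mu)^{-1}$ extends to a bounded (adjointable) operator, and later that it is even uniformly bounded in $\mu$. Neither is implied by the hypotheses. Take $S$ bounded selfadjoint and $T$ unbounded selfadjoint with $[S,T]$ bounded (for instance $S$ a bounded multiplication operator on $L^2(\mathbb R)$ and $T=-i\,d/dx$). Then $(S-i\mu)^{-1}$ is a bounded bijection of $E$, hypotheses (1) and (2) hold trivially with $\sE=\sD(T)$, yet $T(S-i\mu)^{-1}$ has domain $\sD(T)$ and is unbounded. So the ``graph-norm estimate'' you invoke does not exist: the hypotheses control only the \emph{commutator} $[S,T]$ relative to $S$, not $T$ itself relative to $S$. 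Consequently your parametrix construction in (iii), which feeds on the boundedness of $TR$, collapses; and the Neumann-series absorption argument you outline at the end (choose $\mu$ large so that $\|[S,T](S-i\mu)^{-1}\|<1$) still leaves you needing to invert $S^2+T^2+\mu^2$ on the Hilbert $C^*$-module, which is precisely the hard part --- one cannot assume $S^2+T^2$ is essentially selfadjoint and regular on $\sD(S^2)\cap\sD(T^2)$ without further work.

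The route taken in \cite{KaaLes:LGP} is different in kind: it proves a Local--Global Principle (regularity can be checked after localizing at every state of the base $C^*$-algebra, where one lands in an honest Hilbert space), and then the Hilbert-space version of the statement --- where spectral calculus and the Kato--Rellich circle of ideas are available --- is established directly. Your instinct to mention the Local--Global Principle as a fallback was correct; it is in fact the main engine of the actual proof, not a secondary alternative.
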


Finally, we briefly explain how the paper is organized:

In Section \ref{s:OStarAlg} we give a quick summary of the theory of operator
spaces and introduce the notion of an operator $*$-algebra. As a geometric
example we endow the $*$-algebra of $C^1$-functions vanishing at infinity
on a Riemannian manifold with an operator $*$-algebra structure.

Next, in Section \ref{s:OStarM} we discuss operator $*$-modules. As hinted at
earlier an operator $*$-module is a direct summand in the standard module over
an operator $*$-algebra. We think of operator $*$-modules as the analogues of 
Hilbert $C^*$-modules where the $C^*$-algebra is replaced by the more flexible
notion of an operator $*$-algebra.

Section \ref{s:COM} is devoted to the theory of connections on operator
$*$-modules. In analogy to the geometric theory of connections the space of
connections is an affine space modelled on a certain space of completely
bounded $A$-linear operators; there is a canonical connection, called the
Gra{\ss}mann connection, which arises from the operator $*$-module structure.

Section \ref{s:selfdirsch} is concerned with proving selfadjointness and
regularity of the unbounded product operator $D_1 \ti_{\Na} D_2$.
In the following Section \ref{s:IPU} we prove that the pair consisting of the
unbounded product operator and the interior tensor product of $X$ and $Y$ is
an unbounded Kasparov module. In particular we show that the resolvent of the
unbounded product operator is compact.

Kucerovsky's criterion \cite[Theorem 13]{Kuc:KKP}, stated in detail
as Theorem \ref{t:kuceodd} below, is then applied in Section \ref{s:URI} to ultimately prove
that our unbounded product construction yields an unbounded version of the
interior Kasparov product.

The final Section \ref{s:GE} then treats the geometric example of Dirac-Schr\"odinger
operators and proves Theorem \ref{I:ThmB}.

\section*{Acknowledgements} The authors would like to thank Alexander
Gorokhovsky for pushing one of us by asking the right questions
during an early stage of the project. 
Bram Mesland's work \cite{Mes:UBK} on unbounded $KK$-theory has been
constantly serving us as a source of inspiration and therefore we would
like to give him here a warm thank you as well. 

%%%%%%%%%%%%%%%%%%%%%%%%%%%%%%%%%%%%%%%%%8<%%%%%%%%%%%%%%%%%%%%%%%%%%%%%%%%}}}
%% Sec 2 %%%%%%%%%%%%%%%%%%%%%%%%%%%%%%%%%%%%%%%%%%%%%%%%%%%%%%%%%%%%%%%%%%%%%%
\section{Operator $*$--algebras}\label{s:OStarAlg} %%%%%%%%%%%%%%%{{{
The purpose of this section is to introduce the notion of an \emph{operator
$*$--algebra}. This concept will be used throughout the paper. 
%Our main
%examples come from geometry:
%\begin{enumerate}
%\item The algebra of $C^1$--functions which vanish at infinity on a
%Riemannian manifold is an operator $*$--algebra, \cf \Eqref{eq:1107232}
%below. 
%\item Let $\sA\subset\cL(E)$ be a sub--$*$--algebra of the $C^*$--algebra
%of the adjointable operators on the Hilbert $C^*$--module $E$.
%Each derivation
%\[
%\de : \sA \to \B(E), \qquad \de(x \cd y) = \de(x) \cd y + x \cd \de(y)
%\]
%with some extra compatibility condition (see Prop. \ref{p:OpStarAlg} below for
%the precise formulation) gives rise to an operator $*$--algebra $\sA_1$. 
%\end{enumerate}
%These examples will be explained in more detail in Subsection \ref{ss:GEOSA} below. 
We start by reviewing the relevant theory of operator spaces. This subject is
well-treated in the literature. See for example Ruan \cite{Rua:SCA}, 
Blecher \cite{Ble:GHM} and the references therein.

%%%%%%%%%%%%%%%%%%%%%%%%%%%%%%%%%%%%%%%%%8<%%%%%%%%%%%%%%%%%%%%%%%%%%%%%%%%
\subsection{Preliminaries on operator spaces}\label{ss:POS}
Let $X$ be a Banach space over the complex numbers. The norm on $X$ will be
denoted by $\| \cd \|_X : X \to [0,\infty)$.

We will use the notation $M(\cc)$ for the $*$--algebra of infinite matrices
over $\cc$ with only finitely many entries different from $0$. $M(\cc)$
can be identified with the direct limit $\lim_{n\to \infty} M_n(\cc)$ 
where the limit is taken with respect to the inclusions
\begin{equation}\label{eq:inclu}
i_n : M_n(\cc) \to M_{n+1}(\cc), \qquad i_n(v) = \pmat{v & 0 \\
                                                         0 & 0}.
\end{equation}
We will refer to the $\cc$--algebra $M(\cc)$ as the \emph{finite matrices} over
$\cc$. 

For each $n \in \nn$ the $*$--algebra $M_n(\cc)$ is faithfully represented on
the Hilbert space $\cc^n$. We can thus endow $M_n(\cc)$ with the operator norm
coming from this faithful representation. The inclusions in \Eqref{eq:inclu}
then become isometries and we obtain a well-defined norm on the direct limit
$M(\cc)$. The norm on $M(\cc)$ will be denoted by $\|\cd\|_{\cc} : M(\cc) \to
[0,\infty)$. Alternatively, $M(\C)$ is faithfully represented 
on the Hilbert space $\ell^2(\Z_+)$ of square-summable sequences
and $\|\cd\|_{\C}$ is the norm induced by this representation. 

For a Banach space $X$ we denote by $M(X) := M(\cc) \ot_\cc X$ the
algebraic tensor product of the finite matrices over
$\cc$ and the Banach space $X$ . This vector space
has the structure of a $M(\cc)$--$M(\cc)$ bimodule in the obvious way. We will
refer to the bimodule $M(X)$ as the \emph{finite matrices} over $X$.

\begin{dfn}[Operator Space]\label{def:OSpace} A Banach space $(X,\|\cd\|)$
is called an \emph{operator space} if there exists a norm
$\|\cd\|_X : M(X) \to [0,\infty)$ (a priori to be distinguished from the norm,
$\|\cd\|$, on $X$!) on the finite matrices over $X$ such that
\begin{enumerate}
\item For any pair of finite matrices over $\cc$, $v,w \in M(\cc)$, and any
  finite matrix over $X$, $x \in M(X)$, we have the inequality
\[
\|v \cd x \cd w\|_X \leq \|v\|_\C \cd \|x\|_X \cd \|w\|_\C.
\]
\item For any pair of projections, $p,q \in M(\cc)$, with $pq = 0$ and any
  finite matrices $x,y \in M(X)$ we have the identity
\[
\|p x p + q y q \|_X = \max\{\|pxp\|_X, \|q y q\|_X\}.
\]
\item For any projection, $p \in M(\cc)$, of rank one and any element $x \in X$
  we have the identity $\|p \ot x\|_X = \|x\|$.
\end{enumerate}
\end{dfn}
The last condition ensures that the norm $\|\cd\|_X$ is compatible with the
given norm on $X$ and hence in the sequel we will always write $\|\cd\|_X$.

A closed subspace of a $C^*$--algebra is naturally an operator space.
Conversely, every operator space is isometric to a subspace of the algebra
$\B(\cH)$ of bounded operators on some Hilbert space, see~\cite{Rua:SCA}.
We will now review some standard constructions for operator spaces.

For $m \in \N$ we can make the $(m \times m)$--matrices over the operator
space $X$ into an operator space as follows: we define the norm on the finite
matrices over $M_m(X)$ using an appropriate identification 
$M_n\big(M_m(X) \big) \cong M_{nm}(X)$ of vector spaces for each $n \in \N$. 
Furthermore, we let $\Mbar{X}$ denote the completion of $M(X)$ in the operator
norm.  This normed space can be given the structure of an operator space by
using the identification $M_n(\Mbar{X}) \cong \Mbar{M_n(X)}$.

The direct sum $X^m = \op_{i=1}^m X$ can be embedded into the
$(m \times m)$--matrices over $X$ using the injective linear map
\begin{equation}\label{eq:1108045}  %110804
\ph : X^m \to M_m(X), \qquad 
\ph\big( \{x_i\} \big) = \sum_{i=1}^m e_{i1} \ot x_i
                       =\pmat{ x_1 & 0 & \ldots & 0 \\
                               \vdots &  & \vdots & \\
                               x_m & 0 &\ldots  & 0}.
\end{equation}
Here $e_{i1} \in M_m(X)$ denotes the matrix with $1$ in position $(i,1)$ and
zeros elsewhere. This embedding gives $X^m$ the structure of an operator
space. Finally, the infinite direct sum, $X^\infty$, is defined as the
completion of the finite sequences $c_0(X) := \op_{i=1}^\infty X$ with respect
to the norm of the matrix algebra $M(X)$, \ie the closure of $c_0(X)$ inside
$\Mbar{X}$.  The operator space structure on $X^\infty$ is given by the
identification $M_m(X^\infty) \cong M_m(X)^\infty$. 

We will say that a continuous linear map $\ga : X \to Y$ between
the operator spaces $X$, $Y$ is 
\emph{completely bounded} if the supremum of operator norms,
$\sup_{n \in \nn} \| M_n(\al) \|, $ is finite. 
Here the notation $M_n(\al)$ stands for the continuous linear map
$\id\otimes \ga: M_n(X)\cong M_n(\C)\otimes X\to  M_n(\C)\otimes Y\cong M_n(Y)$ 
between the matrix spaces which is induced by $\ga$.
The vector space of completely bounded linear maps from
$X$ to $Y$ will be denoted by $CB(X,Y)$. 
This vector space becomes a Banach space when equipped with the norm 
defined by $\|\al\|_{\cb} := \sup_{n \in \nn} \|M_n(\al)\|$. 
In fact, it can be proved that the Banach space of
completely bounded maps can be turned into an operator space. The norms on
finite matrices come from the identification of vector spaces
\[
M_n\big( CB(X,Y) \big) \cong CB\big(X,M_n(Y)\big), \qquad\text{for } n\in \N;
\]
see \cite[p.~140]{EffRua:ROB}. We remark that a map 
$\al : X \to Y$ is completely bounded if and only if it
induces a bounded map $\al : \Mbar{X} \to \Mbar{Y}$; the latter is then
automatically completely bounded. 

Finally, we will say that the operator spaces $X$ and $Y$ 
are \emph{completely isomorphic} if there exists a completely bounded vector 
space isomorphism $\al : X \to Y$ with completely bounded inverse.
Note that the complete boundedness of the inverse is not automatic here. It would
follow if we knew that the induced map $\ga:\Mbar{X}\to \Mbar{Y}$ between
Banach spaces were bijective. This, however, does of course not follow
from the bijectivity and complete boundedness of $\ga:X\to Y$.

%%%%%%%%%%%%%%%%%%%%%%%%%%%%%%%%%%%%%%%%%8<%%%%%%%%%%%%%%%%%%%%%%%%%%%%%%%%
\subsection{Operator $*$--algebras}\label{ss:OStarA}

\begin{dfn}\label{def:realopalg}
Let $X$ be an operator space which is at the same time an algebra
over the complex numbers. We will call $X$ an \emph{operator algebra}
if the multiplication $m : X \times X \to X$ is completely bounded. 
This means that there exists a constant $K> 0$ such that
$\| x \cd y \|_X \leq K \cd \|x\|_X \cd \|y\|_X$ for all $x,y \in M(X)$.
\end{dfn}

We remark that a closed sub-algebra of a $C^*$--algebra is an operator
algebra. Indeed, the norm on the finite matrices is induced by the unique
$C^*$--norm on the matrices over the $C^*$--algebra. The converse is also true
by a theorem of D. P. Blecher \cite[Theorem 2.2]{Ble:CBC}, \cf also
Christensen--Sinclair \cite{ChrSin:RCB}, and Paulsen--Smith \cite{PauSmi:MMT}.

We are now ready to introduce the concept of an operator $*$--algebra, \cf
\cite[Def.~3.2.3]{Mes:UBK} and \cite[Def.~3.3]{Iva:CBC}.
\begin{dfn}\label{def:OpStarAlg}
Suppose that $X$ is an operator algebra. We will say that $X$ is an
\emph{operator $*$--algebra} if $X$ has a completely bounded involution 
$\da: X \to X$. Here, the involution on matrices is defined as usual by
transposing the matrix and replacing each entry $x$ by $x^\da$, \ie
$\{x_{ij}\}^\da = \{x_{ji}{}^\da\}$.
\end{dfn}

\begin{example} \label{ex:OpStarAlg}
An important example is provided by a closed subalgebra $A \su B$ of a
$C^*$--algebra $B$ together with a $*$--automorphism $\si : B \to B$ with
square equal to the identity and with $\si(x)^* \in A$ for all $x \in A$.
Defining a new involution on $A$ by $x^\da := \si(x)^*$ turns $A$ together
with $\da$ into an operator $*$--algebra. This
involution is actually completely isometric in the sense that 
$\|x^\da\| = \|x\|$ for all $x \in M(A)$.
Note that $A$ is not necessarily (neither with $*$ nor with $\da$) a 
sub--$*$--algebra of $B$.
\end{example}

For later reference we introduce the concept of
$\si$--unitality for operator $*$--algebras.

\begin{dfn}\label{def:SUnitality}
An operator $*$--algebra $X$ is called \emph{$\si$--unital} if there
exists a \emph{bounded} sequence $\{u_m\}$ in $X$ such that
\[
\lim_{m \to \infty} \|u_m \cd x - x\|_X = 0 = \lim_{m \to \infty} \|x \cd u_m
- x\|_X, \qquad \text{for all } x\in X.
\]
We will refer to the sequence $\{u_m\}$ as a (bounded) approximate unit for
$X$.
\end{dfn}
The boundedness of $\{u_m\}$ and the complete boundedness of the map
$\da$ ensure that with $\{u_m\}$ the sequences $\{u_m^\da\}$ and $\{u_m
u_m^\da\}$ are bounded approximate units as well.

%%%%%%%%%%%%%%%%%%%%%%%%%%%%%%%%%%%%%%%%%8<%%%%%%%%%%%%%%%%%%%%%%%%%%%%%%%%

\subsection{Geometric examples of operator $*$--algebras}\label{ss:GEOSA}
We will start by discussing another structure which, by using Example
\ref{ex:OpStarAlg}, can be used to construct an operator $*$--algebra.

\begin{prop}\label{p:OpStarAlg}
Assume that we are given
\begin{enumerate}
\item A Hilbert $C^*$--module $E$ over the $C^*$--algebra $B$.
\item A $*$--algebra, $\sA$, together with a $*$--homomorphism 
 $\pi:\sA \to \B(E)$ into the algebra of adjointable operators, $\B(E)$, on $E$.
\item A derivation $\de : \sA \to \B(E)$ which vanishes on $\ker \pi$ and
 satisfies $\de(a^*) = U \de(a)^* U$ for $a\in\sA$; here, $U$ is some unitary
 $U\in \B(E)$ which \emph{commutes} with the elements of $\sA$.  The
 derivation property means that $\de(a \cd b) = \de(a) \cd \pi(b) + \pi(a) \cd
 \de(b)$ for $a,b\in\sA$.
\end{enumerate}
Let $A_1 \su \B(E)$ denote the completion of $\pi(\sA)$ in the norm $\|\cd
\|_1 : \pi(a) \mapsto \|\pi(a)\|_\infty + \|\de(a)\|_\infty$; here $\|\cd\|_\infty$
denotes the operator norm on $\B(E)$.

Then the  sub--$*$--algebra $A_1 \su \B(E)$ can be given the structure of an
operator $*$--algebra by embedding it into $\B(E\oplus E)$ as follows:
\[
\varrho : a \mapsto 
\matr{cc}{
a & 0 \\
\de(a) & a
} \in \B(E \op E).
\]
\end{prop}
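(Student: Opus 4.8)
The plan is to verify directly that the map $\varrho : A_1 \to \B(E\oplus E)$ is an isometric $*$-algebra homomorphism for the new involution $\da$, and then invoke Example \ref{ex:OpStarAlg}. First I would observe that $\varrho$ is linear and multiplicative: writing $\varrho(a)=\begin{pmatrix} a & 0 \\ \de(a) & a\end{pmatrix}$ one computes $\varrho(a)\varrho(b) = \begin{pmatrix} ab & 0 \\ \de(a)b + a\de(b) & ab\end{pmatrix}$, which equals $\varrho(ab)$ precisely because $\de$ is a derivation. Here one must note that the formula makes sense on $A_1$, not just on $\pi(\sA)$: the assignment $\pi(a)\mapsto \de(a)$ extends continuously from $\pi(\sA)$ to $A_1$ by definition of the norm $\|\cd\|_1$ (it vanishes on $\ker\pi$, so it is well-defined on $\pi(\sA)$, and $\|\de(a)\|_\infty \le \|\pi(a)\|_1$), so $\varrho$ is a well-defined bounded homomorphism on the completion $A_1$.

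Next I would identify $\varrho(A_1)$ as a closed subalgebra of $\B(E\oplus E)$ stable under a suitable order-two $*$-automorphism $\si$. The key computation is that, up to conjugation by a unitary, the ordinary adjoint of an upper- (here lower-) triangular operator is triangular of the opposite type, so we must build an automorphism that flips the two off-diagonal corners. Concretely I expect to conjugate by the $2\times2$ unitary $V=\begin{pmatrix} 0 & U \\ U & 0\end{pmatrix}$ on $E\oplus E$ (using the unitary $U$ from hypothesis (3)) and combine with the $C^*$-adjoint: set $\si(T) = V T^* V$ for $T\in\B(E\oplus E)$. Then $\si$ is a $*$-automorphism of $\B(E\oplus E)$ with $\si^2=\id$ (since $V=V^*$, $V^2=1$), and a direct computation gives
\[
\si(\varrho(a)) = V \begin{pmatrix} a^* & \de(a)^* \\ 0 & a^*\end{pmatrix} V
= \begin{pmatrix} U a^* U & 0 \\ U\de(a)^* U & U a^* U\end{pmatrix}
= \begin{pmatrix} a^* & 0 \\ \de(a^*) & a^*\end{pmatrix} = \varrho(a^*),
\]
where the last equality uses that $U$ commutes with $\sA$ (hence with $\pi(a)$ and, by continuity, with all of $A_1$) and the relation $\de(a^*) = U\de(a)^* U$. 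Thus $\si$ maps $\varrho(A_1)$ into itself and, crucially, $\si(\varrho(a))^* = \varrho(a)^{**}\cdots$ — more precisely one checks $\si(\varrho(a))^*=\varrho(a^*)^* $ lies in $\varrho(A_1)$, which is exactly the condition $\si(x)^*\in A$ of Example \ref{ex:OpStarAlg} with $A = \varrho(A_1)$ and the ambient $C^*$-algebra $B = \B(E\oplus E)$. Defining $x^\da := \si(x)^*$ then coincides, after transporting along $\varrho$, with sending $\begin{pmatrix} a & 0 \\ \de(a) & a\end{pmatrix}$ to $\begin{pmatrix} a^* & 0 \\ \de(a^*) & a^*\end{pmatrix}$, i.e. with the original $*$-operation on $\sA$.

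Finally, to apply Example \ref{ex:OpStarAlg} verbatim I need $\varrho(A_1)$ to be a \emph{closed} subalgebra of $\B(E\oplus E)$; equivalently, $\varrho$ should be a homeomorphism onto its image, for which it suffices to show $\|a\|_1$ is equivalent to $\|\varrho(a)\|_\infty$. One direction, $\|\varrho(a)\|_\infty \le C\|a\|_1$, is clear from the triangular form. For the reverse, since $a$ appears on the diagonal and $\de(a)$ in a corner of $\varrho(a)$, compressing $\varrho(a)$ to the appropriate corners recovers $\|a\|_\infty$ and $\|\de(a)\|_\infty$ each bounded by $\|\varrho(a)\|_\infty$, giving $\|a\|_1 \le 2\|\varrho(a)\|_\infty$; hence $\varrho(A_1)$ is complete, thus closed. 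Then Example \ref{ex:OpStarAlg} endows $\varrho(A_1)\cong A_1$ with an operator $*$-algebra structure whose involution is completely isometric, and the operator space structure is the one inherited from $\B(E\oplus E)$ via $\varrho$. The main obstacle I anticipate is purely bookkeeping: making sure all the relevant maps and relations ($\de$, $\si$, commutation with $U$) extend from the dense $*$-algebra $\pi(\sA)$ to the completion $A_1$, and that the triangular $2\times2$ picture is compatible on all matrix levels $M_n$ so that the claimed structure really is an operator $*$-algebra structure and not merely a Banach algebra one — but this follows formally since $\varrho$ is built from finite matrix operations and the $*$ on $M_n(A_1)$ is induced by $\si$ on $M_n\big(\B(E\oplus E)\big)=\B((E\oplus E)^n)$.
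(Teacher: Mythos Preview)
Your approach is exactly the paper's: embed via $\varrho$, observe multiplicativity from the derivation property, check norm equivalence, and invoke Example~\ref{ex:OpStarAlg} using a unitary $V$ built from $U$. However, two bookkeeping errors in your execution break the argument as written.

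First, your choice $V=\begin{pmatrix}0 & U\\ U & 0\end{pmatrix}$ does not satisfy $V^2=I$ or $V=V^*$ unless $U$ is a self-adjoint unitary, which is \emph{not} assumed (and fails in the main application $U=iI$, cf.\ Remark~\ref{r:CompleteInclusion}.3). Your computation of $V\varrho(a)^*V$ then gives diagonal entries $Ua^*U=a^*U^2\neq a^*$ in general. The fix is to take $V=\begin{pmatrix}0 & U^*\\ U & 0\end{pmatrix}$, for which $V=V^*$ and $V^2=I$ follow from unitarity of $U$ alone; the off-diagonal computation $U\de(a)^*U=\de(a^*)$ then goes through unchanged.

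Second, your map $\sigma(T)=VT^*V$ is \emph{anti}-multiplicative, hence not a $*$-automorphism as required in Example~\ref{ex:OpStarAlg}; worse, your $x^\da:=\sigma(x)^*=V^*xV^*$ is \emph{linear} rather than conjugate-linear, so it is not an involution at all. The paper instead takes $\sigma(\xi)=V\xi V$ (conjugation only, no adjoint), which \emph{is} a $*$-automorphism with $\sigma^2=I$, and then $x^\da:=\sigma(x)^*=Vx^*V$. The identity you actually compute, $V\varrho(a)^*V=\varrho(a^*)$, is precisely $\varrho(a)^\da=\varrho(a^*)$ for this correct $\da$. With these two repairs your proof coincides with the paper's.
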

\begin{remark}\label{r:CompleteInclusion} 
 1. Since $\delta$ vanishes on $\ker \pi$ it descends to a derivation
 on $\pi(\sA)$ and, by continuity, to a derivation $A_1\to \B(E)$
 which is again denoted by $\delta$. By the very
 construction $\delta:A_1\to\B(E)$ is completely bounded. 

2. Let $A$ be the $C^*$--completion of $\pi(\sA)$,\ie the completion of
$\pi(\sA)$ with respect to the norm of $\B(E)$. Then the natural inclusion
$A_1\hookrightarrow A$ is \emph{completely} bounded.
Indeed, for $a\in M_n(A_1)$ and $\xi\in E^n$ we have
\[
\| a \cdot \xi \| \le \left\| \matr{cc}{
                                  a & 0 \\
                             \de(a) & a}
                             \matr{c}{ \xi \\ 0 }\right\|
          \le \|a\|_1\cd \|\xi\|.
\]

3. An important application of Proposition \ref{p:OpStarAlg} is the following,
\cf also the beginning of Sec.~\ref{s:COM} below. 
Consider a triple $(\sA, E, D)$, where $\sA$ and $E$ are as in (1) and (2) of 
Proposition \plref{p:OpStarAlg} and where $D$ is a selfadjoint densely defined 
unbounded operator in $E$ such that for each $a\in \sA$ the operator $\pi(a)$ 
maps the domain of $D$ into itself and the commutator $[D,\pi(a)]$ is in $\B(E)$. 

Put $\delta:\sA\to \B(E), a\mapsto [D,\pi(a)]\in\B(E)$. Then 
$\delta(a^*)=-[D,a]^*= i [D,a]^* i$, thus (3) of Proposition
\plref{p:OpStarAlg} is satisfied with $U=i\cd I$. As a consequence
we obtain a new triple $(A_1,E,D)$ satisfying (1) and (2) of
Proposition \ref{p:OpStarAlg} where now
$A_1\su\B(E)$ is an operator $*$--algebra, the inclusion
$A_1\hookrightarrow A$ into its $C^*$--completion is completely bounded
and $\delta=[D,\cd]:A_1\to \B(E)$ is completely bounded.
\end{remark}
\begin{proof}
The derivation property of $\delta$ ensures that $\varrho$ is an algebra
homomorphism; it is injective since $\sA$ is embedded into $\B(E)$.
Furthermore, the norm  induced by $\B(E\oplus E)$ on $\sA_1$ is equivalent to
$\|\cd\|_1$.

$\rho$, however, does not preserve the involution. Instead, we have
\begin{equation}\label{eq:1107231}  %110723
 \varrho(a^*)=  V \varrho(a)^* V,\quad\text{with } 
 V= \pmat{  0 & U^*\\ U & 0}.
\end{equation}
$V$ is a unitary with $V^2=I$. Thus with the inner automorphism of 
$\B(E\oplus E)$ defined by $\sigma (\xi) = V \xi V$ we have $\sigma^2=I$ and 
$\varrho(a^*)=\sigma(\varrho(a))^*=:\varrho(a)^\da$. 
Thus according to Example \ref{ex:OpStarAlg} the algebra $\varrho(A_1)$ with involution
$\varrho(a)^\da:=\sigma(\varrho(a))^*$ is an operator $*$--algebra and
$\varrho$ is a $*$--isomorphism from $(\sA_1,*)$ onto $(\varrho(\sA_1),\da)$.
\end{proof}

Remark that Proposition \ref{p:OpStarAlg} is very much related to 
the example appearing in \cite[Sec.~3.1]{Mes:UBK}.

Next we apply Proposition \ref{p:OpStarAlg} to the algebra of
$C^1$--functions which vanish at infinity
on an oriented Riemannian manifold $M^m$ of dimension $m$. 
The orientation assumption is made for convenience only 
to have the Hodge $\star$ operator\footnote{At this point
we have to deal with at least three mathematical objects whose standard notation
is $*$: the involution, $\da$, on an operator $*$--algebra $\sA_1\subset\B(E)$,
which is to be distinguished from the native involution, $\ast$, on
the $C^*$--algebra $\B(E)$ and finally the Hodge star operator, $\star$. 
To distinguish the three objects notationally, we denote them by $\da$, $\ast$, and  $\star$,
respectively.} at our disposal without having to deal with the orientation
line bundle. With more notational effort the orientation assumption can
be disposed. We will use the notation $d$ for the
exterior derivative of complex valued forms on $M$. Furthermore, we let $\da$ denote
the involution on complex valued forms given by complex conjugation.

A section $s$ in a hermitian vector bundle, $\mathcal E$, over $M$
is said to \emph{vanish at infinity} if for each $\eps>0$ there exists
a compact subset $K\subset M$ such that $\|s(x)\|_{\mathcal E_x}<\eps$ for
all $x\in M\setminus K$. In short we write $\lim_{x\to \infty} s(x)=0$.
We denote by  $C^1_0(M)$ space of continuously differentiable complex valued
functions on $M$ for which
\begin{equation}\label{eq:1107232}  %110723
 \lim_{x\to\infty} f(x)=0 \quad\text{and}\quad \lim_{x\to\infty}df(x)=0.
\end{equation}
Here, $d$ is the exterior derivative. $C^1_0(M)$ is a $*$--algebra with
involution defined by $f^\dag(x)=\ovl{f(x)}$.
Denote by
\begin{equation}\label{eq:1107233}  %110723
    \Omega^p(M):=\Gamma^\infty(M,\gL^pT^*M\otimes \C)
\end{equation}
the smooth complex valued $p$--forms, \ie the smooth sections of the
hermitian vector bundle $\gL^p T^*M\otimes \C$. Since this vector bundle
is the complexification of a real vector bundle, complex conjugation is
well-defined for forms and  for $\go\in\gO^p(M)$ we therefore put
$\go^\da(x):=\ovl{\go(x)}$. The scalar product on the bundles
$T^*M\otimes\C$ is induced by the Riemannian metric. The metric
and the Hodge star operator are tied together by the formula
\begin{equation}\label{eq:1107238}  %110723
 \inn{\go,\eta}\, \vol_x=\go\,\wedge\, \star\ovl{\eta}
\end{equation}
for $\go,\eta\in\gL^*T_x^*M\otimes\C$; here $\vol_x\in\gL^mT_x^*M$ denotes
the Riemannian volume element.

The space of bounded \emph{continuous} sections
$\Gamma_\textrm{b}(\gL^*T^*M\otimes \C)$ forms a graded commutative algebra which
acts by left multiplication on the space $L^2(\gL^*T^*M\otimes\C)$ of 
square-integrable sections of $\gL^*T^*M\otimes\C$. This representation is
faithful and hence we view $\Gamma_\textrm{b}(\gL^*T^*M\otimes \C)$
as a sub--$*$--algebra of the $C^*$--algebra $\B\bigl(L^2(\gL^*T^*M\otimes\C)\bigr)$.
Note that $C^1_0(M)$ is a sub--$*$--algebra of $\Gamma_\textrm{b}(\gL^*T^*M\otimes \C)
\subset  \B\bigl(L^2(\gL^*T^*M\otimes\C)\bigr)$.

The exterior derivative on functions now induces a natural derivation
\begin{multline}\label{eq:1107234}  %110723
   \delta:C^1_0(M)\longrightarrow \B\bigl(L^2(\gL^*T^*M\otimes\C)\bigr), \quad
   \delta f (\go):= df\wedge \go,\\ \text{for } \go \in L^2(\gL^*T^*M\otimes\C).
\end{multline}
In order to apply Proposition \plref{p:OpStarAlg} to this situation we need
to find the unitary $U$ which commutes with $C^1_0(M)$, has square identity,
and $\delta f^\da=U (\delta f)^* U$. $U$ is provided by the Hodge $\star$
operator as follows:

Let 
\begin{multline}\label{eq:1107237}  %110723
 \gl_0:= \sqrt{-1}^{m(m-1)/2}, \quad\text{and}\quad \gl_p:=
 (-1)^{p(p-1)/2}\gl_0,\\
 \text{for } p=0,\ldots,m.
\end{multline}
Then one checks the following identities:
\begin{align}
     \gl_p\cd\gl_{m-p} & = (-1)^{p(m-p)},     &&\text{for }
     p=0,\ldots,m,\label{eq:110723-a} \\
             \gl_{p+1} & = (-1)^p\,\gl_p,       &&\text{for } p=0,\ldots, m-1,\\
             \gl_{p+q}\cd\gl_{m-q} & = (-1)^{p(p-1)/2+q(m-p-q)}, &&\text{for }
             p,q, p+q\in\bigl\{1,\ldots,m\bigr\}.\label{eq:110723-c} 
\end{align}
where \Eqref{eq:110723-a} is a special case of \Eqref{eq:110723-c}.
Define the modified $\star$ operator on $p$--forms
by $\tstar_p:=\tstar_{|\gL^p}:=\gl_p\, \star_p$.
Then $\tstar$ is a unitary which commutes with the action of $C^1_0(M)$ 
and which satisfies $\tstar\,{}^2=I$.
Moreover, for any $p$--form, $\go\in\gO^p(M)$, the adjoint of the operator
$\ext(\go):=\go\wedge\cdot$ of exterior multiplication by $\go$
is given by 
\begin{equation}\label{eq:1108041}  %110804
 \ext(\go)^*=(-1)^{p(p-1)/2}\, \tstar \ext(\go) \tstar,
\end{equation}
in particular
\begin{equation}\label{eq:1107236}  %110723
    \bigl(\delta f\bigr)^* = \tstar \delta(f^\dag) \tstar.
\end{equation}
In view of Proposition \ref{p:OpStarAlg} we have proved.

\begin{prop}\label{conestar}
The map
\[
\pi : C^1_0(M) \to \B\big(L^2(\gL^*T^*M) \op L^2(\gL^*T^*M)\big), \qquad
\pi(f) = \pmat{f & 0 \\ df & f }
\]
is a $*$--isomorphism from $(C^1_0(M),\da)$ onto an operator $*$--subalgebra
of $\B\big(L^2(\gL^*T^*M) \op L^2(\gL^*T^*M)\big)$ with involution
given by $\pi(f)^\dag=V\pi(f^\dag)V$, where $V=\pmat{0&\tstar \\\tstar &0}$ and
$\star$ is defined on $\gL^p$ as $\gl_p\,\star_p$ with $\gl_p$ from
\Eqref{eq:1107237}.

This gives $C^1_0(M)$ naturally the structure of an
operator $*$--algebra.
\end{prop}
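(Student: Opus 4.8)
The plan is to apply Proposition \ref{p:OpStarAlg} with the concrete data assembled in the preceding discussion and merely verify that its hypotheses (1)--(3) hold in this geometric setting, so that the conclusion of \ref{conestar} is an immediate translation of the conclusion of \ref{p:OpStarAlg}. Concretely, I would take $B = \C$ (or the compact operators on an auxiliary space, whichever normalization the paper uses for the scalar case), the Hilbert $C^*$--module $E := L^2(\gL^*T^*M\otimes\C)$, the $*$--algebra $\sA := C^1_0(M)$ with involution $f^\da(x) = \ovl{f(x)}$, and the $*$--homomorphism $\pi : \sA \to \B(E)$ given by left multiplication (which is faithful, as noted, since $\Gamma_{\mathrm b}(\gL^*T^*M\otimes\C)$ acts faithfully on $E$). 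For the derivation I would take $\de := \de_f$ from \Eqref{eq:1107234}, namely $\de f = \ext(df)$, exterior multiplication by $df$; this is adjointable with $\de f \in \B(E)$ precisely because $f \in C^1_0$ forces $df$ to be a bounded continuous $1$--form, hence $\ext(df)$ is a bounded operator, and its adjoint is identified via \Eqref{eq:1108041}.

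The substantive checks are then the three items of Proposition \ref{p:OpStarAlg}. First, $\de$ is a derivation: $d(fg) = (df) g + f\, (dg)$ gives $\ext(d(fg)) = \ext(df)\pi(g) + \pi(f)\ext(dg)$ by graded-commutativity of the wedge (here everything is in degree-raising-by-one multiplication against functions, so no sign issues intrude). Second, $\de$ vanishes on $\ker\pi$: since $\pi$ is faithful, $\ker\pi = 0$, so this is vacuous. Third, and this is the only point requiring the computations already carried out in the excerpt, the compatibility $\de(f^\da) = U\, (\de f)^*\, U$ with a fixed unitary $U \in \B(E)$ commuting with $\pi(\sA)$ and satisfying $U^2 = I$: here $U := \tstar$, the modified Hodge star $\tstar_p = \gl_p \star_p$ with $\gl_p$ as in \Eqref{eq:1107237}. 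That $\tstar$ is unitary, commutes with multiplication by functions, and squares to the identity is exactly what the identities \Eqref{eq:110723-a}--\Eqref{eq:110723-c} were set up to give; and the required intertwining relation is precisely \Eqref{eq:1107236}, $(\de f)^* = \tstar\, \de(f^\da)\, \tstar$, which rearranges (using $\tstar^2 = I$) to $\de(f^\da) = \tstar\, (\de f)^*\, \tstar$ as demanded.

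With hypotheses (1)--(3) verified, Proposition \ref{p:OpStarAlg} produces an operator $*$--algebra structure on the $\|\cd\|_1$--completion $A_1$ of $\pi(\sA)$ inside $\B(E)$, realized by the embedding $\varrho : a \mapsto \pmat{a & 0 \\ \de(a) & a}$ into $\B(E\op E)$, with the involution $\varrho(a)^\da = V\varrho(a)^* V$ for $V = \pmat{0 & U^* \\ U & 0} = \pmat{0 & \tstar \\ \tstar & 0}$ (using $U = U^* = \tstar$). It remains only to observe that $A_1$ coincides with $C^1_0(M)$ as a $*$--algebra: an element $\pi(f)$ has finite $\|\cd\|_1$--norm $\|f\|_\infty + \|df\|_\infty$, and the closure of $C^1_c(M)$ (or of $C^\infty_c$, whichever is dense) in this norm is exactly $C^1_0(M)$ by the two vanishing-at-infinity conditions in \Eqref{eq:1107232} — so the completion does not enlarge the algebra beyond $C^1_0(M)$ and the map $\pi$ of \ref{conestar} is the map $\varrho$ of \ref{p:OpStarAlg} written out with $\de f = df\wedge\cdot$ in the lower-left slot. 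Unwinding the identification then gives the displayed isomorphism with the stated formula for the involution, completing the proof.

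The main obstacle, such as it is, is entirely bookkeeping rather than conceptual: one must be careful that the adjoint formula \Eqref{eq:1108041}/\Eqref{eq:1107236} is stated for the \emph{modified} star $\tstar$ with the correct scalars $\gl_p$, since it is precisely the choice of $\gl_p$ in \Eqref{eq:1107237} that kills the degree-dependent sign $(-1)^{p(p-1)/2}$ in $\ext(\go)^* = (-1)^{p(p-1)/2}\tstar\ext(\go)\tstar$ and simultaneously arranges $\tstar^2 = I$; these two requirements together pin down $\gl_p$ up to the overall phase $\gl_0 = \sqrt{-1}^{m(m-1)/2}$, and the verification that one such choice works is exactly the content of the identities \Eqref{eq:110723-a}--\Eqref{eq:110723-c}. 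Everything else is a direct appeal to Proposition \ref{p:OpStarAlg}.
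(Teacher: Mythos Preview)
Your approach is exactly the paper's: the proof of Proposition~\ref{conestar} in the text is literally the sentence ``In view of Proposition~\ref{p:OpStarAlg} we have proved,'' and your proposal spells out the verification of hypotheses (1)--(3) that the preceding discussion had already assembled. One small inaccuracy in your commentary (not in the proof itself): the choice of $\gl_p$ does \emph{not} eliminate the sign $(-1)^{p(p-1)/2}$ in \Eqref{eq:1108041}---that sign is still present for general $p$; the reason \Eqref{eq:1107236} has no sign is simply that $df$ is a $1$--form and $(-1)^{1\cdot 0/2}=1$. The role of the $\gl_p$ is only to arrange $\tstar^2=I$ and unitarity, as the paper's identities \Eqref{eq:110723-a}--\Eqref{eq:110723-c} record.
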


\begin{note}[Approximate unit on complete manifolds]\label{n:AppUniCom}
For future reference we note that if the oriented Riemannian manifold $M$ is
complete then we can choose a sequence of smooth compactly supported
functions $\{\chi_k\}$ such that
\begin{enumerate}
\item The image of each $\chi_k$ is contained in the interval $[0,1]$, \ie
 \hbox{$0\le \chi_k\le 1$}.
\item The exterior differentials, $d\chi_k$, converge to $0$ uniformly, more precisely
 it can be arranged that $\|d \chi_k\|_\infty \le 1/k$ for all $k\in\N$.
\item For each compact subset $K\subset M$ there is an index $k_0$ such
 that $\chi_k(x)=1$ for all $x\in K$ and all $k\ge k_0$.
 \end{enumerate}
 See, \eg \cite[Sec.~5]{Wol:ESA}, \cite[p.~117]{LawMic:SG}, or \cite[Lemma
3.2.4]{Les:OFT}. In particular, the sequence $\{\chi_k\}$ is a
bounded approximate unit for the operator $*$--algebra $C^1_0(M)$. Thus, in
the case of an oriented complete manifold the operator $*$--algebra $C^1_0(M)$
is $\si$--unital.
\end{note}

%%%%%%%%%%%%%%%%%%%%%%%%%%%%%%%%%%%%%%%%%8<%%%%%%%%%%%%%%%%%%%%%%%%%%%%%%%%}}}
%% Sec 3 %%
\section{Operator $*$--modules}\label{s:OStarM} %%%%%%%%%%%%%%%%%{{{
The purpose of this section is to introduce the notion of an \emph{operator
$*$--module}. Stated a little vaguely, an
operator $*$--module is a direct summand in a standard module over an operator
$*$--algebra. In particular, by Kasparov's stabilization theorem each
countably generated Hilbert $C^*$--module is an operator $*$--module
\cite{Kas:HCM}, \cite[Sec.~13.6.2]{Bla:KTO2Ed}, \cite[Sec.~6.2]{Lan:HCM}.
However, the concept is more general. For example, we show that the Hilbert
space-valued $C^1$--functions which vanish at infinity on an oriented
Riemannian manifold form an operator $*$--module. We expect that, under
reasonable assumptions, $C^1$--sections of Hilbert bundles which vanish at
infinity are operator $*$--modules as well, \cf Remark \ref{r:HilbertBundles}
below. 

We start by giving the main operator algebraic definitions.
\begin{dfn}\label{def:OpMod}
Let $A$ be an operator algebra in the sense of Definition
\ref{def:realopalg}. Furthermore, let $X$ be a right-module over $A$.

We will then say that $X$ is a \emph{right operator module} over $A$
if $X$ is equipped with the structure of an operator space such
that the right action $X \times A \to X$ is completely bounded. This means that
there exists a constant $K>0$ such that 
\[
\| \xi \cd a\|_X \leq K \cd \|\xi\|_X \cd \|a\|_A, \qquad \text{for all }
 \xi \in M(X) \text{ and all } a \in M(A).
\]
\end{dfn}

Operator modules are well-treated in the
literature, see the survey \cite{ChrSin:SCB} and the references therein.

\begin{dfn}\label{def:HOpMod}
Suppose that $A$ is an operator $*$--algebra in the sense of 
Definition \ref{def:OpStarAlg} with involution $\da$
and let $X$ be a right operator module over $A$
in the sense of Definition \ref{def:OpMod}. 

We will say that $X$ is a \emph{hermitian operator module} if
there exists a \emph{completely bounded} pairing 
$\inn{\cd , \cd}_X : X \times X \to A$ satisfying the conditions
\begin{equation}\label{eq:sesqlin}
\begin{split}
\inn{\xi,\eta \cd \la + \rho \cd \mu}
& = \inn{\xi,\eta} \cd \la + \inn{\xi,\rho} \cd \mu \\
\inn{\xi,\eta \cd x}
& = \inn{\xi,\eta}\cd x \\
\inn{\xi,\eta} & = \inn{\eta,\xi}^\da
\end{split}
\end{equation}
for all $\xi,\eta,\rho \in X$, $x \in A$ and $\la,\mu \in \cc$.
\end{dfn}
The condition of complete boundedness means that the induced 
pairing of matrices
\begin{equation}\label{eq:CompBoundMatrixAlg}
\inn{\cd,\cd}_X : M(X) \times M(X) \longrightarrow M(A),
\qquad \inn{\xi,\eta}_{ij} = \sum_{k = 1}^\infty \inn{\xi_{ki},\eta_{kj}}
\end{equation}
is bounded in the sense that there exists a constant $K>0$ such that
\begin{equation}\label{eq:compinner}
\| \inn{\xi,\eta}\|_A \leq K \cd \|\xi\|_X \cd \|\eta\|_X,
 \qquad \text{for all } \xi,\eta \in M(X).
\end{equation}

Our first example of a hermitian operator module is the standard module over
$A$.

\begin{dfn}\label{def:StandardModule}
By the \emph{standard module} over the operator $*$--algebra $A$ we will
understand the completion of the finite sequences $c_0(A) \su M(A)$ in the
norm of $M(A)$, thus the closure of $c_0(A)$ in $\Mbar{A}$.
\end{dfn}

The standard module $A^\infty$ is an operator module over $A$. Furthermore,
we can define the pairing
\begin{equation}\label{eq:Pairing}
\inn{\cd,\cd} : A^\infty \ti A^\infty \to A\qquad
\inn{\{a_i\},\{b_i\}} = \sum_i a_i^\da
\cd b_i.
\end{equation}
To see that the sum is convergent we note that for $\{a_i\}_i, \{b_i\}_i\in A^\infty$
by definition the matrices, \cf \Eqref{eq:1108045},
\begin{equation}
\xi:=\pmat{ a_1 & 0 & \dots \\
       a_2 & 0 & \dots \\
    \vdots & \ddots & }, 
\quad
\eta:=\pmat{ b_1 & 0 & \dots \\
       b_2 & 0 & \dots \\
    \vdots & \ddots & }
\end{equation}
are in $\Mbar{A}$ and the pairing \Eqref{eq:CompBoundMatrixAlg} yields
\begin{equation}\label{eq:3}  %110815
 \inn{\xi,\eta}_{A^\infty}= \xi^\da \cd \eta = 
       \pmat{ \sum_i a_i^\da \cd b_i & 0 & \dots \\
                    0 & \dots \\
		    \vdots & }.  
\end{equation}
The properties of the pairing \Eqref{eq:CompBoundMatrixAlg}, in particular
\Eqref{eq:compinner}, then imply not only the convergence of the rhs of
\Eqref{eq:Pairing} but also that the pairing $\inn{\cd,\cd}$
in \Eqref{eq:Pairing} is completely bounded. 
$A^\infty$ is thus a hermitian operator module.

We are now ready for the main definition of this section.
\begin{dfn} \label{def:OpStarMod}
Suppose that $X$ is a hermitian operator module over the operator $*$--algebra $A$. 
We will say that $X$ is an \emph{operator $*$--module} if it is completely isomorphic 
to a direct summand in the standard module $A^\infty$. To be more precise, there exist a
completely bounded selfadjoint idempotent $P : A^\infty \to A^\infty$ and a completely
bounded isomorphism of hermitian operator modules $\al : X \to P
A^\infty$. Here the selfadjointness of $P$ means that $\inn{P\{a_i\},\{b_i\}}
= \inn{\{a_i\},P\{b_i\}}$ for all sequences $\{a_i\}\, , \, \{b_i\} \in A^\infty$.
\end{dfn}

Suppose that $X = PA^\infty$ and $Y = Q A^\infty$ are operator
$*$--modules. A finite sequence $\xi=\{\xi^k\}_{1\le k\le N}\in c_0(X)$ may be
thought of as a matrix $\{\xi^k_l\}_{k,l=1}^\infty$ with entries in $A$ where
only finitely many rows contain nonzero entries,\ie
\begin{equation}\label{eq:1108141}  %110814
        \xi=\pmat{\xi_1^1 & \xi_2^1 &\dots   \\
                    \vdots &        & \vdots  \\
                    \xi_1^N &  \xi_2^N &\dots \\
                    0       & 0        &\dots       \\
                    \vdots  & \text{\huge\bf $0$} & \dots }.
\end{equation}
Here, each row $\{\xi^k_l\}_{l\in\N}$ lies in $A^\infty$. The reader should
be warned that the matrix $\xi$ does not necessarily lie in $\Mbar{A}$.
Rather, the transposed
matrix, $\xi^t$,\ie the infinite matrix with columns given by 
$\xi^1,\xi^2,\ldots\in X\su A^\infty$ is in $\Mbar{A}$. 
The infinite matrix $(\xi^t)^{\da}\in\Mbar{A}$ is obtained from $\xi^t \in \Mbar{A}$ 
by applying the completely bounded involution $\da : \Mbar{A} \to \Mbar{A}$.
Equivalently, $(\xi^t)^\da$ is obtained by replacing each entry $\xi^k_l$ of
the matrix $\xi$ by $(\xi^k_l)^\da$.

For each pair of finite sequences $\xi \in c_0(X) \T{ and } \eta
\in c_0(Y)$ we let $\te_{\xi,\eta} : Y \to X$ denote the completely bounded
module map defined by
\[
\te_{\xi,\eta}(\rho) := \xi^t \cd \inn{\eta^t,\rho} = \sum_{k=1}^\infty \xi^k
\cd \inn{\eta^k,\rho}, \qquad \text{for } \rho \in Y,
\]
in fact
\begin{equation}\label{eq:ThetaCBEst}  %110814
    \|\te_{\xi,\eta} \|_\cb \le C\cd \|\xi\|_X \cd \|\eta \|_Y
\end{equation}
with some constant $C>0$ independent of $\xi,\eta$.
Note that $\theta_{\xi,\eta}:Y\to X$ is given by matrix multiplication
by the infinite matrix $\xi^t\cd (\eta^t)^\da\in \Mbar{A}$.

%%%%%%%%%%%%%%%%%%%%%%%%%%%%%%%%%%%%%%%%%8<%%%%%%%%%%%%%%%%%%%%%%%%%%%%%%%%
\begin{prop}\label{p:approxcompact}
Suppose that $X$ is an operator $*$--module over the $\si$--unital
operator $*$--algebra $A$. Then there exists a sequence
$\{w^m\}_{m=1}^\infty$ of elements in $c_0(X)$ such that $\te_{w^m,w^m}(\rho)
\to \rho$ for all $\rho \in X$. Furthermore, the sequence can be chosen to be
bounded in the sense that $\sup_{m \in \nn} \|(w^m)^t\|_X < \infty$.
This means that $\sup_{m\in \N}\|\theta_{w^m,w^m}\|_\cb<\infty$.
\end{prop}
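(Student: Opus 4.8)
The plan is to build the sequence $\{w^m\}$ out of two ingredients: the completely bounded self-adjoint idempotent $P$ realizing $X$ as $PA^\infty$, and a bounded approximate unit $\{u_m\}$ for $A$. First I would reduce to the case $X = PA^\infty$ by transporting everything along the completely bounded isomorphism $\alpha : X \to PA^\infty$; since $\alpha$ and $\alpha^{-1}$ are completely bounded and intertwine the pairings, a sequence realizing approximate identity operators on $PA^\infty$ pulls back to one on $X$ with uniformly bounded cb-norms. So assume $X = PA^\infty$ from now on.

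Next, on the standard module $A^\infty$ itself I would exhibit the approximate units explicitly. For each $m$ consider the finite sequence $v^{m,N} \in c_0(A^\infty)$ whose $k$-th entry ($1 \le k \le N$) is the element $u_m e_k \in A^\infty$, i.e.\ $u_m$ in the $k$-th slot and $0$ elsewhere; here $e_k$ is the $k$-th standard basis sequence. Then $\theta_{v^{m,N}, v^{m,N}}(\rho) = \sum_{k=1}^N e_k \cdot u_m^\da \cdot \langle e_k, u_m \cdot \text{(truncation)}\rangle$, which one computes to be left multiplication by the diagonal matrix $\operatorname{diag}(u_m^\da u_m, \ldots, u_m^\da u_m, 0, \ldots)$ acting on $\rho \in A^\infty$. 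Using Definition~\ref{def:SUnitality} and the remark following it — that $\{u_m^\da u_m\}$ is again a bounded approximate unit — together with a diagonal argument choosing $N = N(m) \to \infty$ fast enough, one gets $\theta_{v^{m,N(m)}, v^{m,N(m)}}(\rho) \to \rho$ for all $\rho$ in the algebraic direct sum $c_0(A^\infty)$, which is dense in $A^\infty$. The uniform bound $\sup_m \|(v^{m,N(m)})^t\|_{A^\infty} < \infty$ follows from boundedness of $\{u_m\}$ and the operator-space identity (Definition~\ref{def:OSpace}(2)) that puts a block-diagonal matrix's norm equal to the sup of its blocks — here all blocks are $u_m$, so the transposed matrix has norm $\|u_m\|$.

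Finally, to pass from $A^\infty$ to $PA^\infty$, set $w^m := P \cdot v^{m,N(m)}$, interpreted entrywise: apply the completely bounded idempotent $P$ to each row. Since $P$ is completely bounded, $\|(w^m)^t\|_X = \|P (v^{m,N(m)})^t\|_{A^\infty} \le \|P\|_{\cb} \cdot \|(v^{m,N(m)})^t\|_{A^\infty}$, so the uniform bound survives. For the convergence: because $P$ is a self-adjoint idempotent and commutes with the $A$-action (it is an $A$-linear map $A^\infty \to A^\infty$), one checks $\theta_{w^m, w^m} = P\, \theta_{v^{m,N(m)}, v^{m,N(m)}}\, P$ on $A^\infty$; restricted to $X = PA^\infty$ this is $P\, \theta_{v^{m,N(m)}, v^{m,N(m)}}|_X$, and since $\theta_{v^{m,N(m)},v^{m,N(m)}}(\rho) \to \rho$ and $P$ is bounded with $P\rho = \rho$ for $\rho \in X$, we conclude $\theta_{w^m,w^m}(\rho) \to \rho$ for all $\rho \in X$. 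The cb-bound statement then follows from \eqref{eq:ThetaCBEst}.

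I expect the main obstacle to be the bookkeeping in the second step: verifying that $\theta_{v^{m,N},v^{m,N}}$ really is left multiplication by the indicated truncated-diagonal matrix, and then choosing the truncation rate $N(m)$ so that convergence holds \emph{simultaneously} on a dense set while keeping the sequence bounded — a standard but slightly delicate diagonalization. The interchange of $P$ with the $\theta$-construction is routine once one recalls that $\theta_{\xi,\eta}$ is matrix multiplication by $\xi^t (\eta^t)^\da$ (as noted after \eqref{eq:ThetaCBEst}) and that $P$, being $A$-linear and self-adjoint, passes through such multiplications cleanly.
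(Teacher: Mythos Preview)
Your approach is essentially the paper's: reduce to $X=PA^\infty$, build the approximate identity on $A^\infty$ from the diagonal matrices with $u_m$ on the first several diagonal slots, then compress by $P$. Two small remarks: the diagonalization you flag as the main obstacle is unnecessary --- the paper simply takes $N(m)=m$, so that for a fixed finite sequence $\alpha=\sum_{i=1}^k e_i a_i$ and $m\ge k$ one has $\theta_{v^m,v^m}(\alpha)-\alpha=\sum_{i=1}^k e_i(u_m u_m^\da a_i - a_i)\to 0$ directly; and the diagonal entry in your computation should be $u_m u_m^\da$ rather than $u_m^\da u_m$, though both are bounded approximate units so the slip is harmless.
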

\begin{proof} It suffices to prove the claim for $X = A^\infty$:
 for if $\{w^m\}_{m=1}^\infty$ is such a sequence for $A^\infty$ then
 $\{Pw^m\}_{m=1}^\infty$ does the job for $X=PA^\infty$.

Let $\{u^m\}_{m=1}^\infty$ be an approximate unit for $A$ in the sense
of Definition \ref{def:SUnitality}. For each $m \in \N$ we let 
$(v^m)^t = (e_1 u_m,\ldots,e_m u_m)$. Here $e_i
u_m \in A^\infty$ denotes the sequence with $u_m$ in position $i$ and zeros
elsewhere. Then 
\[
\|(v^m)^t\|_{A^\infty} 
= \|(e_1 u_m,\ldots,e_m u_m)\|_{A^\infty}
= \|1_m \ot u_m \|_A
= \|u_m\|.
\]
Here $1_m\in M(\C)$ denotes the $(m \ti m)$ unit matrix viewed as an
idempotent in $M(\C)$. Furthermore, we have used item (3) of 
Definition \plref{def:OSpace}. This proves that 
$\sup_{m \in \nn} \|(v^m)^t\| < \infty$ since
the approximate unit $\{u_m\}$ is bounded in $A$.
Therefore, to prove that the sequence $\{\te_{v^m,v^m}\}$ converges strongly to the
identity we only need to show that $\te_{v^m,v^m}(\al) \to \al$ for each
finite sequence $\al = \sum_{i=1}^k e_i a_i \in c_0(A)$. For each $m \geq k$
we have
\begin{multline*}
\| \te_{v^m,v^m}(\al)  - \al \|_{A^\infty}
= \|v^m \inn{v^m,\al} - \al\|_{A^\infty}
= \|\sum_{i=1}^m e_i u_m \inn{e_i u_m,\al} - \al \|_{A^\infty} \\
= \| \sum_{i=1}^k e_i \cd (u_m u_m^\da a_i - a_i)\|_{A^\infty}
\leq \sum_{i=1}^k \| u_m u_m^\da \cd a_i - a_i\|_A.
\end{multline*}
As remarked after Definition \ref{def:SUnitality} the sequence
$\{u_m u_m^\da \}$ is a bounded approximate unit for $A$ as well,
hence the right hand side converges to $0$ and the claim about strong
convergence follows. The last claim is a consequence of \Eqref{eq:ThetaCBEst}.
\end{proof}

We expect that the theory of operator $*$--modules fits nicely
into Blecher's theory of rigged modules \cite[Def.~3.1]{Ble:GHM}. 
Obvious candidates for the structure maps are induced by the sequence 
$\{w^m\}$ of elements in $c_0(X)$ using the module
structure as well as the completely bounded pairing. 

Furthermore, each countably generated Hilbert $C^*$--module is an operator
$*$--module. This can be seen as a consequence of Kasparov's stabilization
theorem \cite{Kas:HCM}, \cite[Sec.~13.6.2]{Bla:KTO2Ed}, \cite[Sec.~6.2]{Lan:HCM}.

\subsection{The standard module for $C^1_0(M)$} \label{ss:SMCM}
We end this section by computing the standard module for the algebra
$C^1_0(M)$, $M$ an oriented Riemannian manifold, \cf Subsection \ref{ss:GEOSA}.
For a separable Hilbert space $H$ we denote by $C_0^1(M,H)$ the space
of $H$-valued continuously differentiable maps $f:M\to H$ satisfying
\Eqref{eq:1107232}. 
$C^1_0(M,H)$ becomes a Banach space when equipped with the norm 
\begin{equation}\label{eq:1108042}  %110804
\|f\|_1 = \sup_{x \in M}|\inn{f(x),f(x)}|^{1/2} + \sup_{x \in M} |\inn{df(x),df(x)}|^{1/2}.
\end{equation}
Furthermore, pointwise multiplication gives $C^1_0(M,H)$ the structure of
a module over $C^1_0(M)$.

\begin{prop}\label{conemodu}
Let $M$ be an oriented Riemannian manifold and let $H$ be a separable Hilbert
space. The standard module, $\bigl(C^1_0(M)\bigr)^\infty$,
over the operator $*$--algebra $A = C^1_0(M)$ is then isomorphic to
$C^1_0(M,H)$. 
\end{prop}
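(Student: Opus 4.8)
The plan is to identify $\bigl(C^1_0(M)\bigr)^\infty$, which by Definition \ref{def:StandardModule} is the closure of the finite sequences $c_0\bigl(C^1_0(M)\bigr)$ inside $\Mbar{C^1_0(M)}$, with $C^1_0(M,H)$ via the obvious map sending a (finite) sequence $\{f_i\}$ of $C^1$-functions to the $H$-valued function $x\mapsto \sum_i f_i(x) e_i$, where $\{e_i\}$ is an orthonormal basis of $H$. To make this rigorous I would first unwind the operator space structure on $C^1_0(M)$ coming from Proposition \ref{conestar}: $C^1_0(M)$ is represented via $\pi(f)=\pmat{f&0\\ df&f}$ inside $\B\bigl(L^2(\gL^*T^*M)\op L^2(\gL^*T^*M)\bigr)$, so a matrix $(f_{ij})\in M_n\bigl(C^1_0(M)\bigr)$ has operator-space norm equal to the $\B$-norm of the corresponding block operator matrix $\bigl(\pi(f_{ij})\bigr)$. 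The key computation is therefore to evaluate this norm explicitly in terms of the pointwise data $f_{ij}(x)$ and $df_{ij}(x)$.

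The heart of the argument is the following pointwise norm formula. For a matrix $v=(f_{ij})$ one has
\[
\bigl\|(\pi(f_{ij}))\bigr\| = \sup_{x\in M}\left\| \pmat{ (f_{ij}(x)) & 0 \\ (df_{ij}(x)) & (f_{ij}(x)) } \right\|,
\]
the sup over $x$ of operator norms of $2\times 2$ block matrices acting on $\C^n\oplus (\gL^*T_x^*M)^n$ — this is because left multiplication operators on an $L^2$-section space have norm equal to the essential sup of the pointwise operator norms. Next I would observe that for fixed $x$ the operator $\pmat{a&0\\ b&a}$ on a Hilbert space sum has norm controlled, up to a factor of $2$, by $\|a\|+\|b\|$ (and bounded below by $\max\{\|a\|,\|b\|\}$), giving a \emph{uniform} equivalence, independent of $n$, between $\|(f_{ij})\|_{M_n(C^1_0(M))}$ and $\sup_x\bigl(\|(f_{ij}(x))\|_{M_n(\C)}+\|(df_{ij}(x))\|\bigr)$. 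Applying this with the first column only (the embedding \eqref{eq:1108045} of $X^n$ into $M_n(X)$), a finite sequence $\{f_i\}_{i\le n}$ has $\bigl(C^1_0(M)\bigr)^\infty$-norm uniformly equivalent to $\sup_x |\inn{f(x),f(x)}|^{1/2}+\sup_x|\inn{df(x),df(x)}|^{1/2}$, i.e.\ precisely the norm \eqref{eq:1108042} defining $C^1_0(M,H)$, upon setting $f(x)=\sum_i f_i(x)e_i$.

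With the norm equivalence in hand, completing both sides gives the desired isomorphism: the map $\{f_i\}\mapsto \sum_i f_i(\cdot)e_i$ extends to a (bi-Lipschitz, hence completely bounded with completely bounded inverse after also checking the matrix-level estimates, which the same pointwise formula delivers) isomorphism of the completion of $c_0\bigl(C^1_0(M)\bigr)$ onto the completion of the finitely-supported $H$-valued $C^1$-maps. It remains to see that this latter completion is all of $C^1_0(M,H)$; here I would argue by truncating an arbitrary $f\in C^1_0(M,H)$ via the orthogonal projections $P_n$ onto $\mathrm{span}\{e_1,\dots,e_n\}$: $P_nf\to f$ and $d(P_nf)=P_n\,df\to df$ uniformly on $M$ because $f$ and $df$ vanish at infinity (so the tails are uniformly small off a compact set) and are uniformly norm-continuous on compacta, and each $P_nf$ is a finite $C^1$-combination $\sum_{i\le n}\inn{e_i,f(\cdot)}e_i$ with scalar coefficients in $C^1_0(M)$. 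The main obstacle — and the step deserving the most care — is the uniform-in-$n$ two-sided norm estimate for the block matrix $\pmat{a&0\\ b&a}$ together with the interchange of the supremum over $x\in M$ with the matrix operator norm; once that pointwise linear-algebra lemma is nailed down, the rest is the routine completion-and-truncation argument sketched above.
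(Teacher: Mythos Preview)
Your proposal is correct and follows essentially the same route as the paper: both use the map $\{f_i\}\mapsto \sum_i f_i(\cdot)\,e_i$ and identify the column norm via the pointwise operator norm of the multiplication matrices. You are in fact slightly more careful than the paper, establishing a uniform norm \emph{equivalence} (which is all the statement requires, and which the block estimate for $\pmat{a&0\\ b&a}$ delivers) rather than an exact isometry, and spelling out the density of finitely supported functions in $C^1_0(M,H)$ via truncation by finite-rank projections, a point the paper simply asserts.
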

\begin{proof}
Let $\{e_i\}$ be an orthonormal basis for $H$. Recall that the
submodule $\op_{i=1}^\infty C_0^1(M)$ of finite sequences is dense in the
standard module $\bigl(C^1_0(M)\bigr)^\infty$. Likewise, the linear span,
$\lspan_{\cc}\big\{e_i f \, | \, i \in \nn \, , \, f \in C_0^1(M) \big\}$, is
a dense submodule of $C^1_0(M,H)$. These two submodules are isometric by the
isomorphism $\{f_i\} \mapsto \sum_{i=1}^\infty e_i f_i$. 
Indeed, the norm of multiplication by the matrices
\begin{equation}\label{eq:1108043}  %110804
\pmat{ f_1 & 0 &\ldots & 0\\
       \vdots &  &\vdots & \\
       f_k & 0 & \ldots  & 0}, \qquad
\pmat{ df_1 & 0 &\ldots & 0\\
       \vdots &  &\vdots & \\
       df_k & 0 & \ldots  & 0}
\end{equation}
on $L^2(\gL^* T^*M\ot \C)^k$ is easily seen to be
$\sup_{x\in M} \big(\sum_{i=1}^k |f_i(x)|^2\big)^{1/2}$,
resp. $\sup_{x \in M} \big(\sum_{i=1}^k \inn{df_i(x),df_i(x)} \big)^{1/2}$.
Consequently,
\begin{equation}\label{eq:1108044}  %110804
\|f\|_1  = \|\{f_i\}\|_\infty + \|\{df_i\}\|_\infty,
\end{equation}
where the notations $\|\{f_i\}\|_\infty$ and $\|\{df_i\}\|_\infty$ are
shorthand for the operator norms of the multiplication by the matrices
in \eqref{eq:1108043}. Since the natural norm on
$\bigl(C^1_0(M)\bigr)^\infty$ is given by the right hand side of 
\Eqref{eq:1108044} we reach the conclusion.
\end{proof}

\begin{remark}\label{r:HilbertBundles}
With the above result in mind it seems to be a worthwhile task to characterize
the operator $*$--modules over $C^1_0(M)$,  thus the ``completely bounded''
direct summands in the module $C^1_0(M,H)$, \eg in the case of a complete
oriented manifold. We expect that many interesting Hilbert bundles will appear
in this way. We hope to explore this in a subsequent publication.
\end{remark}

%%%%%%%%%%%%%%%%%%%%%%%%%%%%%%%%%%%%%%%%%8<%%%%%%%%%%%%%%%%%%%%%%%%%%%%%%%%}}}
%% Sec 4 %%
\section{Connections on operator $*$--modules}\label{s:COM}   %%%{{{

In order to ease reference to it we are going to introduce some
standard notation in the form of a numbered proclaim.

\begin{conv}\label{StaConA}
Let $X_1 = P\StM{A_1}$ be an operator $*$--module over an operator $*$--algebra
$A_1$. We assume that $A_1 \su A$ sits as a dense $*$--subalgebra inside a
$C^*$--algebra $A$ and that the inclusion $i : A_1 \hookrightarrow A$ is completely
bounded, \cf Proposition \ref{p:OpStarAlg} and Remark \ref{r:CompleteInclusion}. 
The operator $*$--algebra norm on $A_1$ will be denoted by
$\|\cd\|_1$ and the $C^*$--algebra norm on $A$ will be denoted by
$\|\cd\|$. 
\end{conv}

Given such an $X_1$ the completely bounded selfadjoint
idempotent $P : \StM{A_1} \to \StM{A_1}$ extends to an orthogonal projection $P :
\StM{A} \to \StM{A}$. Indeed, 
\[
\|P\{a_i\}\|^2 
= \| \inn{\{a_i\},P\{a_i\}} \|
\leq \|\{a_i\}\| \cd \|P\{a_i\}\|
\]
for all sequences $\{a_i\} \in \StM{A_1}$. We let $X = P\StM{A}$ denote the Hilbert
$C^*$-module over $A$ defined by $P : \StM{A} \to \StM{A}$. The inclusion $X_1 \to X$
is then completely bounded and compatible with both the inner products and the
module actions.

\begin{conv}\label{StaConB} Let $A_1$ be an operator $*$-algebra
as in Convention \ref{StaConA}.
Let $(A,Y,D)$ be a triple consisting of:
\begin{enumerate}
\item An $A$--$B$ Hilbert $C^*$--bimodule $Y$. That is, $Y$ is a Hilbert
$C^*$--right module over the $C^*$--algebra $B$\footnote{we will for brevity
also use the term ``Hilbert $B$--module'' instead of the somewhat lengthy
``Hilbert $C^*$--module over $B$''.} together with a $*$-representation
$\pi:A\to \B_B(Y)$. 
\item A selfadjoint densely defined unbounded operator $D : \sD(D) \to Y$ in $Y$
such that 
\begin{enumerate}
 \item\label{StaNot2a} Each $a\in A_1$ maps the domain of $D$ into itself, 
 \item\label{StaNot2b} the commutator with $D$ yields a completely bounded map 
       $[D,\cd] : A_1 \to \B(Y)$ on the operator $*$--algebra $A_1 \su A$. 
\end{enumerate}       
\end{enumerate}
\end{conv}
\cf also \cite[Def.~6]{Kuc:KKP}.  If $B=\C$ these are, up to the requirement of 
\emph{complete} continuity and a missing compactness assumption, 
the axioms for a spectral triple $(Y,A_1,D)$,
\cf \cite[Def.~3.1 and Remark 3.2]{Hig:RIT}. For unbounded operators one has
to be careful with domains; the condition \eqref{StaNot2a}, which is
very important, is often slightly obscured in the literature.
The assumption of complete boundedness in \eqref{StaNot2b} is not very 
restrictive, \cf Remark \ref{r:CompleteInclusion}, 3. 

We will mostly suppress $\pi$ from the notation and write
$a\cdot y$ for the action, $\pi(a)$,  of $a\in A$ on $y\in Y$. The
$C^*$--algebra of bounded adjointable operators on $Y$ is denoted by 
$\B_B(Y)$; if no confusion is possible we will also omit the subscript $B$.

In the sequel we will repeatedly use the interior tensor product 
``$\hot_A$'' of $C^*$--modules, see \cite[Prop.~4.5]{Lan:HCM} and
\cite[Sec.~13.5]{Bla:KTO2Ed}. Recall that 
the Hilbert $B$--module $X\hot_A Y$ is the completion of the algebraic
tensor product $X\ot_A Y$ with respect to the inner product
\begin{equation*}
     \inn{x_1\otimes y_1, x_2\otimes y_2}_B = \inn{y_1, \inn{x_1,x_2}_A y_2}_B,
     \qquad \text{for } x_1,x_2\in X, y_1, y_2\in Y.
\end{equation*}
The $C^*$--algebra $\B_B(Y)$ is, as every $C^*$--algebra, a Hilbert module
over itself. Thus $X\hot_A \B_B(Y)$ is a Hilbert $B$--module. It is also
an $A$--right module via the representation $\pi:A\to \B_B(Y)$. The action
of $\B_B(Y)$ on $Y$ gives rise to the contraction map
\begin{equation}    
    c:(X \hot_A \B(Y)) \ot Y \longrightarrow X \hot_A Y, \quad
      x\ot T\ot y\mapsto x\ot Ty \label{eq:1108052}  %110805 
\end{equation}
and the inner product on $X$ induces the pairing
\begin{equation}
    X \times X\hot_A\B_B(Y)\longrightarrow \B_B(Y),\quad ( x, y \ot T) = \inn{x,y} \cd T
                 \label{eq:1108051}   %110805
\end{equation}                 

After these preparations we are ready to introduce the main concept
of this section. 
\begin{dfn}\label{def:HermConn}
With the notation of Conventions \ref{StaConA} and \ref{StaConB}
introduced before we call a completely bounded 
linear map $\Na_{D} : X_1 \to X \hot_A
\B(Y)$ a \emph{$D$--connection} if
\[
\Na_{D}(x \cd a) = \Na_{D}(x) \cd a + x \ot [D,a],
\qquad \text{for all } x \in X_1, a \in A_1.
\]
A $D$--connection is called \emph{hermitian} if additionally
\[
[D,\inn{x_1,x_2}] = \big( x_1,\Na_{D}(x_2) \big) - \big( x_2,
\Na_{D}(x_1) \big)^*, \qquad \text{for all } x_1,x_2 \in X_1.
\]
Here $(\cdot,\cdot)$ denotes the pairing introduced in \Eqref{eq:1108051}.

Furthermore, we write $c(\Na_{D})$ for the composition of maps
\[
\begin{CD}
c(\Na_{D}) : X_1 \ot Y @>(\Na_{D} \ot 1)>> (X \hot_A \B(Y)) \ot Y @>c>> X
\hot_A Y.
\end{CD}
\]
\end{dfn}

Before passing to the examples of hermitian $D$--connections we 
record.
\begin{lemma}\label{l:innercomp} Assume Conventions \ref{StaConA}, \ref{StaConB}. 
Suppose that $\Na_{D} : X_1 \to X \hot_A \B(Y)$ is a hermitian
$D$--connection. Then we have the identity
%\begin{multline*}
\[
\binn{c(\Na_{D})(x_1 \ot y_1),x_2 \ot y_2}\\
= \binn{x_1 \ot y_1, c(\Na_{D})(x_2 \ot y_2)}
- \binn{y_1, [D,\inn{x_1,x_2}](y_2)},
\]
%\end{multline*}
between inner products on $X \hot_A Y$ and inner products on $Y$, for all
$x_1,x_2 \in X_1$ and all $y_1,y_2 \in Y$.
\end{lemma}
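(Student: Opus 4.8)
The plan is to prove the identity by a direct computation working on simple tensors $x_i \otimes y_i$ and then extending by density. First I would unwind the definitions: by \Eqref{eq:1108052} we have $c(\Na_D)(x_1 \otimes y_1) = c\big((\Na_D \otimes 1)(x_1 \otimes y_1)\big)$, and writing $\Na_D(x_1)$ as a (limit of) finite sum $\sum_k z_k \otimes T_k \in X \hot_A \B(Y)$, the contraction gives $c(\Na_D)(x_1 \otimes y_1) = \sum_k z_k \otimes T_k y_1$. Then the inner product $\binn{c(\Na_D)(x_1\otimes y_1), x_2 \otimes y_2}_B$ expands, using the formula for the inner product on $X \hot_A Y$, as $\sum_k \binn{T_k y_1, \inn{z_k, x_2}_A^\da \, y_2}_B$, which by the definition of the pairing \Eqref{eq:1108051} is exactly $\binn{y_1, (x_2, \Na_D(x_1))^* y_2}_B$ — here I need to be careful that $(\,\cdot\,,\,\cdot\,)$ is conjugate-linear in the first slot or linear in the second according to \Eqref{eq:1108051}, and that taking the adjoint of $\B(Y)$-valued quantities matches the $C^*$-module inner product convention, so I would track the placement of $\da$ versus $\ast$ carefully.

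Next I would do the symmetric computation for $\binn{x_1 \otimes y_1, c(\Na_D)(x_2 \otimes y_2)}_B$, obtaining $\binn{y_1, (x_1, \Na_D(x_2)) y_2}_B$. Subtracting the two expressions, the claimed identity reduces to showing
\[
\binn{y_1, \big[ (x_1, \Na_D(x_2)) - (x_2, \Na_D(x_1))^* \big] y_2}_B = \binn{y_1, [D, \inn{x_1,x_2}] y_2}_B,
\]
which is immediate from the hermitian condition in Definition \ref{def:HermConn}, namely $[D, \inn{x_1,x_2}] = (x_1, \Na_D(x_2)) - (x_2, \Na_D(x_1))^*$. So the whole lemma is essentially a bookkeeping exercise that transports the hermitian-connection identity (an identity in $\B(Y)$) into an identity between inner products on $X \hot_A Y$ and on $Y$, via the compatibility of $c$ and $(\,\cdot\,,\,\cdot\,)$ with the tensor-product inner product.

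The one genuine subtlety — and the step I would treat most carefully — is the passage from finite sums to the actual elements $\Na_D(x_i) \in X \hot_A \B(Y)$, which are only norm-limits of elementary tensors. I would argue that both sides of the asserted identity are continuous in $\Na_D(x_1)$ and $\Na_D(x_2)$ with respect to the $X \hot_A \B(Y)$-norm (using that $c$ restricted to $(X \hot_A \B(Y)) \times Y \to X \hot_A Y$ is bounded and that the pairing $(\,\cdot\,,\,\cdot\,): X \times (X \hot_A \B(Y)) \to \B(Y)$ is bounded), so it suffices to verify it on the dense subspace spanned by elementary tensors, where the computation above applies verbatim. After that, extending from elementary tensors $x_i \otimes y_i$ in the $X \hot_A Y$ arguments to general elements is automatic by sesquilinearity and continuity of all the maps involved. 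No other obstacle is expected; the result is really just Definition \ref{def:HermConn} rewritten.
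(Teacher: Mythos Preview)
Your proposal is correct and follows essentially the same route as the paper: both compute $\binn{c(\Na_D)(x_1\otimes y_1), x_2\otimes y_2}$ via the compatibility of the tensor-product inner product with the pairing \eqref{eq:1108051}, reduce to $\binn{y_1,(x_2,\Na_D(x_1))^* y_2}$, and then invoke the hermitian identity from Definition~\ref{def:HermConn}. The paper does this in a single chain of equalities without expanding $\Na_D(x_i)$ into elementary tensors (it simply uses the identity $\binn{\Na_D(x_1)(y_1), x_2\otimes y_2} = \binn{(x_2,\Na_D(x_1))(y_1), y_2}$ directly), so your density/continuity discussion is more explicit than what the paper writes but not a different argument; note also that your intermediate expression should read $\inn{z_k,x_2}_A$ rather than $\inn{z_k,x_2}_A^\da$, which you already flagged as a spot to double-check.
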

\begin{proof}
The result follows from the computation
\[
\begin{split}
 \binn{&c(\Na_{D})(x_1 \ot y_1),x_2 \ot y_2} 
= \binn{\Na_{D}(x_1)(y_1),x_2 \ot y_2} \\
& = \binn{\big(x_2,\Na_{D}(x_1)\big)(y_1),y_2}  
   = \binn{y_1, \big(x_2,\Na_{D}(x_1)\big)^*(y_2)} \\
& = \binn{y_1, \big(x_1,\Na_{D}(x_2) \big)(y_2)} -
    \binn{y_1,[D,\inn{x_1,x_2}](y_2)} \\
&  = \binn{x_1 \ot y_1, c(\Na_{D})(x_2 \ot y_2)}
   - \binn{y_1, [D,\inn{x_1,x_2}](y_2)}.
\end{split}
\]
Here we have in fact only used the hermitian property of $\Na_{D}$.
\end{proof}

\subsection{The Gra{\ss}mann connection}\label{levicivi}
We continue assuming Convention \ref{StaConA}.
We shall now see that our operator $*$--module $X_1 = P\StM{A_1}$ carries a
canonical hermitian $D$--connection provided that an extra mild condition is
satisfied. This connection is basically the restriction of the commutator
$[D,\cd]$ to the operator $*$--module. We will thus term this connection the
Gra{\ss}mann $D$--connection.

In order to explain our extra condition we introduce the
$C^*$--algebra of $D$--$1$--forms, \cf \cite[Chap.~IV.1]{Con:NG}.

\begin{dfn}\label{def:DForms} Let $(X_1,A_1)$ and $(A,Y,D)$ as in 
Conventions \ref{StaConA} and \ref{StaConB}. Let $\gO^1_{D}\su \B_B(Y)$ 
be the smallest $C^*$--subalgebra of $\B_B(Y)$
such that $[D,a_1] \T{ and } \pi(a) \in \Om^1_{D}$ for all $a_1 \in A_1$ 
and all $a \in A$. We endow $\Om^1_{D}$ with its natural structure of an
$A$--$\Om^1_{D}$ Hilbert--$C^*$--module. Let $\pi:A\to \B(\gO^1_D)$ be
the action of $A$ by left multiplication on $\gO^1_D$.

The action $\pi : A \to \Om^1_{D}$ is called
\emph{essential} if $\pi(A)\gO^1_D$ is dense in $\gO^1_D$;
then also $\pi(A_1)\gO^1_D$ is dense in $\gO^1_D$.
\end{dfn}

From now on we assume that the action $\pi : A \to \Om^1_{D}$ is
\emph{essential}. In particular, we have an isomorphism $A^\infty \hot_A
\Om^1_{D} \cong (\Om^1_{D})^\infty$ of Hilbert $C^*$--modules.

We are now ready to define the Gra{\ss}mann $D$--connection.

\begin{dfn}[Gra{\ss}mann $D$--connection]\label{def:LCConn}
By the \emph{Gra{\ss}mann $D$--connection} on $X_1$ we will understand the
completely bounded linear map $\Gc_{D}$ obtained as the composition
%\[
%\begin{CD}
%X_1 @>>> A_1^\infty @>[D,\cd]>>
%(\Om^1_{D})^\infty \cong A^\infty \hot_A \Om^1_{D} 
%@>(P \ot 1)>> X \hot_A \Om^1_{D} @>>> X \hot_A \B(Y)
%\end{CD}
%\]
\[
X_1 \longhookrightarrow A_1^\infty \xrightarrow{[D,\cd]}
(\Om^1_{D})^\infty \cong A^\infty \hot_A \Om^1_{D} 
\xrightarrow{P \ot 1}  X \hot_A \Om^1_{D} \longrightarrow X \hot_A \B(Y).
\]
Here the second map, $[D,\cd]$, is given by applying the completely bounded
derivation $[D,\cd]$ to each entry in a sequence.
\end{dfn}

\begin{prop}\label{p:LCHerm}
The Gra{\ss}mann $D$--connection is a hermitian $D$--connection in the
sense of Definition \ref{def:HermConn}.
\end{prop}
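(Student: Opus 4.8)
The plan is to extract both properties of Definition \ref{def:HermConn} directly from the factorisation defining $\Gc_D$, reducing every identity to finite sequences (where all sums are finite) and then extending by continuity, using that every map in the construction is completely bounded. First I would fix notation. An element of $X_1 = PA_1^\infty$ is a sequence $\xi = \{\xi_i\}$ with $\xi_i \in A_1$; let $\Psi$ denote the isomorphism $(\gO^1_D)^\infty \cong A^\infty\hot_A\gO^1_D$ of Hilbert $C^*$--modules afforded by essentiality of $\pi\colon A\to\gO^1_D$, so that $\Psi^{-1}(\{a_i\}\ot\eta) = \{\pi(a_i)\eta\}$. Two compatibility statements for $\Psi$, both checked on elementary tensors and extended by density, will carry the argument: (a) $\Psi$ intertwines entrywise right multiplication by $\pi(a)$ on $(\gO^1_D)^\infty$ with the right $A$--action $w\mapsto w\cd a$ (via $\pi$) on $A^\infty\hot_A\gO^1_D$; and (b) $\Psi$ carries the pairing $A^\infty \times (\gO^1_D)^\infty \to \gO^1_D$, $(\{a_i\},\{\omega_i\})\mapsto\sum_i\pi(a_i^\da)\omega_i$, to the $A^\infty$--analogue of the pairing \Eqref{eq:1108051} on $A^\infty\times(A^\infty\hot_A\gO^1_D)$. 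Finally I record the two algebraic ingredients: the involution $\da$ on the $*$--subalgebra $A_1\su A$ is the $C^*$--involution $\ast$ (Convention \ref{StaConA}), and $[D,\pi(a)^\ast] = -[D,\pi(a)]^\ast$ in $\B(Y)$ for $a\in A_1$, because $D = D^\ast$ and both $\pi(a)$ and $\pi(a)^\ast = \pi(a^\ast)$ preserve $\sD(D)$.

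For the connection property I first work with the ``unprojected'' Grassmann connection $\widetilde{\Gc}_D\colon A_1^\infty\to A^\infty\hot_A\B(Y)$ obtained by omitting the factor $P\ot 1$ from Definition \ref{def:LCConn}; then $\Gc_D = (P\ot 1)\circ\widetilde{\Gc}_D|_{X_1}$ once $X\hot_A\B(Y)$ is identified with the summand $(P\ot 1)\bigl(A^\infty\hot_A\B(Y)\bigr)$. For a finite sequence $z = \{z_i\}$ and $a\in A_1$, the Leibniz rule gives $[D,\pi(z_ia)] = [D,\pi(z_i)]\pi(a) + \pi(z_i)[D,\pi(a)]$ entrywise, so (a) together with $\Psi^{-1}(\{\pi(z_i)[D,\pi(a)]\}) = z\ot[D,a]$ yields $\widetilde{\Gc}_D(z\cd a) = \widetilde{\Gc}_D(z)\cd a + z\ot[D,a]$; by continuity this holds for all $z\in A_1^\infty$. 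Applying $P\ot 1$ and using its $A$--linearity together with $Px = x$ for $x\in X_1$ gives $\Gc_D(x\cd a) = \Gc_D(x)\cd a + x\ot[D,a]$, as required.

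For the hermitian property, selfadjointness of $P$ first gives $\bigl(x_1,(P\ot 1)w\bigr) = (x_1,w)$ for $x_1\in X_1$ and $w\in A^\infty\hot_A\B(Y)$, the right-hand side denoting the $A^\infty$--valued pairing; hence $\bigl(x_1,\Gc_D(x_2)\bigr) = \bigl(x_1,\widetilde{\Gc}_D(x_2)\bigr)$ and similarly with the roles of $x_1,x_2$ exchanged, so it suffices to treat $\widetilde{\Gc}_D$ and, by continuity, finite sequences $x_1 = \{x_i\}$, $x_2 = \{y_i\}$. Then (b) gives $\bigl(x_1,\widetilde{\Gc}_D(x_2)\bigr) = \sum_i\pi(x_i)^\ast[D,\pi(y_i)]$, whence $\bigl(x_2,\widetilde{\Gc}_D(x_1)\bigr)^\ast = \sum_i[D,\pi(x_i)]^\ast\pi(y_i)$; on the other hand $\inn{x_1,x_2} = \sum_i x_i^\ast y_i$, so the Leibniz rule together with $[D,\pi(x_i)^\ast] = -[D,\pi(x_i)]^\ast$ gives $[D,\inn{x_1,x_2}] = \sum_i\bigl(\pi(x_i)^\ast[D,\pi(y_i)] - [D,\pi(x_i)]^\ast\pi(y_i)\bigr)$, which is precisely $\bigl(x_1,\widetilde{\Gc}_D(x_2)\bigr) - \bigl(x_2,\widetilde{\Gc}_D(x_1)\bigr)^\ast$. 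As both sides are continuous in $(x_1,x_2)$ --- by complete boundedness of the pairing on $X_1$ and of $[D,\cd]\colon A_1\to\B(Y)$ --- this extends from finite sequences to all of $X_1$.

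The only genuine difficulty I anticipate is bookkeeping: keeping the various module structures and pairings on $(\gO^1_D)^\infty$, on $A^\infty\hot_A\gO^1_D$ and on $X\hot_A\B(Y)$ apart, and verifying carefully that the identification $\Psi$ intertwines them as in (a) and (b). Once those two compatibilities are established, both halves of the statement follow mechanically from the Leibniz rule and the single relation $\da = \ast$, $[D,\pi(a)^\ast] = -[D,\pi(a)]^\ast$. There is no analytic content beyond the (routine) reduction to finite sequences, which is legitimate precisely because every map appearing in the construction of $\Gc_D$ is completely bounded.
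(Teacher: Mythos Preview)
Your proposal is correct and follows essentially the same route as the paper: both arguments reduce to the entrywise Leibniz rule $[D,\pi(z_ia)]=[D,\pi(z_i)]\pi(a)+\pi(z_i)[D,\pi(a)]$ for the connection identity and to the expansion $[D,a_i^\ast b_i]=a_i^\ast[D,b_i]-\bigl([D,a_i]\bigr)^\ast b_i$ for the hermitian identity. The paper simply suppresses the isomorphism $\Psi$, the reduction to finite sequences, and the use of selfadjointness of $P$ that you spell out; your introduction of the unprojected connection $\widetilde{\Gc}_D$ is an organisational convenience rather than a different idea.
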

\begin{proof}
We start by proving that $\Gc_{D}$ is a $D$--connection. Thus,
let $x = \{a_i\} \in X$ and let $a \in A_1$. Then
\[
\begin{split}
\Gc_{D}(x \cd a) & = (P \ot 1)\{ [D,a_i \cd a] \}  \\
    & = (P \ot 1)\{[D,a_i]\} \cd a + (P \ot 1)( \{a_i\} \ot [D,a] ) \\
    & = \Gc_{D}(x) \cd a + x \ot [D,a].
\end{split}
\]
Here we have suppressed various maps in order to obtain a cleaner
computation. To show that $\Gc_{D}$ is hermitian we compute
for $\{a_i\}\, , \, \{b_i\} \in X$
\[
\begin{split}
\big[D, \inn{\{a_i\},\{b_i\}}\big]
& = \sum_{i=1}^\infty [D,a_i^* \cd b_i]
= \sum_{i=1}^\infty a_i^* [D,b_i] - \big( \sum_{i=1}^\infty b_i^*
[D,a_i]\big)^*
\\
& = \big(\{a_i\},\{[D,b_i]\})  - \big(\{b_i\},\{[D,a_i]\})^* \\
& = \big( \{a_i\}, \Gc_{D}\{b_i\} \big)
- \big( \{b_i\}, \Gc_{D}\{a_i\} \big)^*,
\end{split}
\]
which is the desired identity.
\end{proof}

\subsection{Comparison of connections}
We are now interested in comparing different hermitian $D$--connections. We
shall apply a general result on automatic boundedness for operator
$*$--modules. 

\begin{prop}\label{p:autobound} Let $(X_1,A_1)$ be as in Convention \ref{StaConA}
with $A_1$ being $\sigma$--unital. Then any completely bounded $A_1$--linear
map $\ga:X_1 \to Z$ into an operator module $Z$ over $A$ 
extends to a completely bounded $A$--linear
map, $\al : X \to Z$.  
% to center something on the arrow use ^-
\[ \xymatrix{X_1=P\StM{A_1} \ar[d]\ar[r]^-\ga &Z  \\ X =P\StM{A}
\ar@{-->}[ur]_\ga  &} \] 
\end{prop}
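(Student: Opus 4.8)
The plan is to exploit the explicit description of $X_1 = P\StM{A_1}$ as a complemented submodule of the standard module together with the strong-approximate-unit machinery from Proposition \ref{p:approxcompact}. First I would reduce to the case $X_1 = \StM{A_1}$: since $X_1 = P\StM{A_1}$ with $P$ a completely bounded selfadjoint idempotent that extends to an orthogonal projection $P : \StM{A} \to \StM{A}$, any completely bounded $A_1$--linear $\gamma : X_1 \to Z$ can be precomposed with $P : \StM{A_1} \to \StM{A_1}$ to give a completely bounded $A_1$--linear map $\StM{A_1} \to Z$; extending the latter to $\StM{A}$ and restricting to $X = P\StM{A}$ yields the desired extension. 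So it suffices to extend a completely bounded $A_1$--linear map $\gamma : \StM{A_1} \to Z$ to a completely bounded $A$--linear map $\StM{A} \to Z$.

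On the standard module the natural guess for the extension is forced by linearity and continuity: on the dense submodule $c_0(A) \subset \StM{A}$ one would like to set $\alpha(\{a_i\}) = \sum_i \gamma(e_i)\cdot a_i$, where $e_i \in \StM{A_1}$ is the $i$-th standard generator (which lies in $A_1$, hence in $\StM{A_1}$, only after we multiply by the approximate unit — more precisely $e_i u_m \in \StM{A_1}$). The key step is therefore to produce a \emph{single} completely bounded $A$--linear map whose restriction to $\StM{A_1}$ recovers $\gamma$, and for this I would use the bounded sequence $\{w^m\} \subset c_0(\StM{A_1})$ from Proposition \ref{p:approxcompact} with $\theta_{w^m,w^m}(\rho) \to \rho$ for all $\rho \in \StM{A}$ and $\sup_m \|\theta_{w^m,w^m}\|_{\cb} < \infty$. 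Writing $w^m = \{w^{m,k}\}_{k}$ (finitely many nonzero entries, each in $\StM{A_1}$), one sets
\[
\alpha_m(\rho) := \sum_k \gamma(w^{m,k}) \cdot \inn{w^{m,k},\rho}, \qquad \rho \in \StM{A}.
\]
Each $\alpha_m : \StM{A} \to Z$ is completely bounded and $A$--linear, with $\|\alpha_m\|_{\cb} \le \|\gamma\|_{\cb}\cdot\|\theta_{w^m,w^m}\|_{\cb}$, hence uniformly bounded in $m$; and on $X_1$, $A_1$--linearity of $\gamma$ gives $\alpha_m|_{X_1} = \gamma \circ \theta_{w^m,w^m}|_{X_1}$, which converges pointwise to $\gamma$. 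One then has to promote this to a genuine limit map: since the $\alpha_m$ are uniformly completely bounded, a standard point-norm convergence argument on the dense submodule $c_0(A)$ (where $\alpha_m(\rho) = \sum_k \gamma(w^{m,k})\cdot\inn{w^{m,k},\rho}$ converges because $\theta_{w^m,w^m}(\rho)\to\rho$ and $\gamma$ is continuous, using that $\rho$ has finitely many nonzero entries) shows that $\alpha := \lim_m \alpha_m$ exists pointwise on $c_0(A)$, extends to all of $\StM{A}$ by the uniform bound, and is completely bounded and $A$--linear with $\|\alpha\|_{\cb} \le \|\gamma\|_{\cb}\,\sup_m\|\theta_{w^m,w^m}\|_{\cb}$.

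The main obstacle I anticipate is verifying that $\alpha$ genuinely restricts to $\gamma$ on $X_1$ — i.e. that the pointwise limit of $\gamma\circ\theta_{w^m,w^m}$ on $X_1$ is $\gamma$ itself rather than something merely agreeing with $\gamma$ on a dense subset. This is where one must be careful: $\theta_{w^m,w^m}$ converges strongly to the identity on $X = P\StM{A}$ but the $X_1$-norm is strictly finer, so one needs that for $\rho \in X_1$ the elements $\theta_{w^m,w^m}(\rho)$ actually converge to $\rho$ \emph{in $X_1$}, or alternatively that $\gamma$'s complete boundedness on $X_1$ together with $A_1$-linearity lets one pass the $X_1$-convergence through $\gamma$; the cleanest route is to observe that the $w^m$ can be taken from $c_0(\StM{A_1})$ so that $\theta_{w^m,w^m}$ restricts to a completely bounded $A_1$-linear map $X_1 \to X_1$ built from the $A_1$-approximate unit, and that on the dense submodule of finite $A_1$-sequences one has literal convergence in the $X_1$-norm — then continuity of $\gamma : X_1 \to Z$ finishes it. Everything else is the routine operator-space bookkeeping already set up in Sections \ref{ss:POS} and \ref{s:OStarM}.
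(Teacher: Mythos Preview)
Your approach rests on the same key ingredient as the paper --- the factorization $\theta_{w^m,w^m}(\cdot) = (w^m)^t \cdot \langle (w^m)^t,\cdot\rangle$ from Proposition~\ref{p:approxcompact} --- but you organize it more circuitously than necessary. The paper does not build maps $\alpha_m$ on $X$ and pass to a limit; instead it shows in one line that $\alpha : X_1 \to Z$ is already completely bounded for the \emph{coarser} $X$-norm, whence the extension follows by continuity. For $x \in M_n(X_1)$ one writes $\alpha(x) = \lim_m \alpha\bigl(1_n \otimes (w^m)^t\bigr)\cdot \langle 1_n \otimes (w^m)^t, x\rangle$ using $A_1$-linearity; the first factor is bounded by $\|\alpha\|_{\cb}\,\|(w^m)^t\|_{X_1}$, while the second is bounded by $C\,\|(w^m)^t\|_{X_1}\,\|x\|_X$ because $Z$ is an operator module over $A$ (not merely over $A_1$), so it is the $A$-norm of the inner product that controls the action on $Z$. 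This yields $\|\alpha(x)\|_Z \le C'\|x\|_X$ immediately, and all your worries about pointwise convergence of $\alpha_m$ and about whether the limit restricts to $\gamma$ on $X_1$ simply do not arise.

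Two places where your version slips: the bound $\|\alpha_m\|_{\cb} \le \|\gamma\|_{\cb}\,\|\theta_{w^m,w^m}\|_{\cb}$ is not justified, since $\alpha_m$ does \emph{not} factor as $\gamma \circ \theta_{w^m,w^m}$ on $X$ (only on $X_1$); the correct estimate separates $\|(w^m)^t\|_{X_1}$ from the $A$-module constant of $Z$, though uniform boundedness survives. And your convergence argument must be run on $X_1$ (or $c_0(A_1)$), not on $c_0(A)$: for $\rho \in c_0(A)\setminus X_1$ the element $\theta_{w^m,w^m}(\rho)$ lands in $X$ but not in $X_1$, so $\gamma$ cannot be applied. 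Finally, the ``main obstacle'' you anticipate is illusory: Proposition~\ref{p:approxcompact} is stated for operator $*$-modules, so applied to $X_1$ over $A_1$ it already delivers $\theta_{w^m,w^m}(\rho)\to\rho$ in the $X_1$-norm for $\rho \in X_1$; there is nothing further to check. The preliminary reduction to $X_1 = A_1^\infty$ is likewise unnecessary.
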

\begin{proof}
Let $\{\te_{w^m,w^m}\}$ denote a sequence of completely bounded operators as
in Proposition \ref{p:approxcompact}. Then we have for any finite matrix $x \in M_n(X_1)$ over $X_1$
\[
\begin{split}
\al(x) & = \lim_{m \to \infty} \al(\te_{w^m,w^m}(x))
= \lim_{m \to \infty} \al\big((1_n\ot (w^m)^t ) \cd \binn{1_n \ot (w^m)^t 1_n,x}
\big) \\
& = \lim_{m \to \infty} \al(1_n \ot (w^m)^t) \cd \binn{1_n\ot (w^m)^t,x}.
\end{split}
\]
Here $(w^m)^t \ot 1_n$ refers to the $(n \ti n)$ diagonal matrix which has the
row $(w^m)^t$ on the diagonal. It follows that
\[
\begin{split}
\|\al(x)\|_Z 
& \leq C_1 \cd \limsup_{m \to \infty} \|\al\|_{\cb} \cd \|(w^m)^t\|_{X_1} \cd
\|\binn{(w^m)^t \ot 1_n,x}\|_A \\
& \leq C_2 \cd \limsup_{m \to \infty} \|\al\|_{\cb} \cd \|(w^m)^t\|_{X_1}^2 \cd
\|x\|_X,
\end{split}
\]
with constants $C_1,C_2 > 0$ independent of the size
$n$ of the matrix. But this proves the claim since $\sup_{m \in \nn}
\|(w^m)^t\| < \infty$ is finite.
\end{proof}

\begin{prop}\label{p:compself}
Let $(X_1,A_1)$ be as in Convention \ref{StaConA} with $A_1$
being $\sigma$--unital. Furthermore let $(A,Y,D)$ as in Convention \ref{StaConB}.
Consider two $D$--connections $\Na_{D}, \wit\Na_{D} : X_1 \to X \hot_A \B(Y)$. 
Then the difference $\Na_{D} - \wit \Na_{D}$ extends to a completely bounded
$A$--linear operator $\Na_{D} -  \wit \Na_{D} : X \to X \hot_A \B(Y)$.

If additionally the two connections are hermitian then 
the completely bounded operator $c( \Na_{D} - \wit\Na_{D}) : X \hot_A Y \to X
\hot_A Y$ is selfadjoint. Here, $c$ is the contraction map
defined in \Eqref {eq:1108052}.
\end{prop}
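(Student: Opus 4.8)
The plan is to split the proof into its two assertions. For the first, I would observe that both $\Na_D$ and $\wit\Na_D$ are $D$--connections with respect to the \emph{same} commutator term $x\ot[D,a]$; hence the difference satisfies
\[
(\Na_D-\wit\Na_D)(x\cd a)=(\Na_D-\wit\Na_D)(x)\cd a,\qquad x\in X_1,\ a\in A_1,
\]
so $\Na_D-\wit\Na_D:X_1\to X\hot_A\B(Y)$ is completely bounded and $A_1$--linear. Since $A_1$ is $\sigma$--unital and $X\hot_A\B(Y)$ is an operator module over $A$ (via $\pi$), Proposition \ref{p:autobound} applies verbatim and yields a completely bounded $A$--linear extension $\Na_D-\wit\Na_D:X\to X\hot_A\B(Y)$. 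That takes care of the first claim essentially for free.

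For the second claim, write $\De:=\Na_D-\wit\Na_D:X\to X\hot_A\B(Y)$. Since $c$ is completely bounded (it is the contraction map of \Eqref{eq:1108052}) and $\De$ is completely bounded and $A$--linear, $c(\De):X\hot_A Y\to X\hot_A Y$ is a well-defined bounded operator; the point is selfadjointness. The natural strategy is to first verify the identity on the dense subspace $X_1\ot Y\su X\hot_A Y$ and then extend by continuity. On elementary tensors $x_1\ot y_1,\ x_2\ot y_2$ with $x_1,x_2\in X_1$, Lemma \ref{l:innercomp} gives, for \emph{each} of the two hermitian connections,
\[
\binn{c(\Na_D)(x_1\ot y_1),x_2\ot y_2}=\binn{x_1\ot y_1,c(\Na_D)(x_2\ot y_2)}-\binn{y_1,[D,\inn{x_1,x_2}](y_2)},
\]
and the analogous identity for $\wit\Na_D$ with the \emph{same} correction term $\binn{y_1,[D,\inn{x_1,x_2}](y_2)}$. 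Subtracting, the correction terms cancel and one is left with
\[
\binn{c(\De)(x_1\ot y_1),x_2\ot y_2}=\binn{x_1\ot y_1,c(\De)(x_2\ot y_2)},
\]
which is exactly the symmetry relation for $c(\De)$ on $X_1\ot Y$.

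It then remains to upgrade this to genuine selfadjointness of the bounded operator $c(\De)$ on all of $X\hot_A Y$. Since $c(\De)$ is bounded and everywhere defined, it suffices to know that the above inner-product identity extends from $X_1\ot Y$ to $X\hot_A Y$; this follows because $X_1$ is dense in $X$ (with $\inn{\cd,\cd}_{A_1}$ mapping continuously into $\inn{\cd,\cd}_A$, by Convention \ref{StaConA}) and both sides are continuous in $(x_1,y_1,x_2,y_2)$, the left side via $\|c(\De)\|$ and the right side via the boundedness of $c(\De)$ again. Then for any $\zeta,\zeta'\in X\hot_A Y$ we get $\inn{c(\De)\zeta,\zeta'}=\inn{\zeta,c(\De)\zeta'}$, so $c(\De)$ is symmetric and bounded, hence adjointable with $c(\De)^*=c(\De)$. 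I expect the only mildly delicate point to be the density/continuity bookkeeping in this last extension step — in particular checking that $c(\De)$ really maps into $X\hot_A Y$ and is bounded there, which is where the complete boundedness of $\De$ (not mere boundedness) and of $c$ is genuinely used; everything else is a direct consequence of Lemma \ref{l:innercomp} and Proposition \ref{p:autobound}.
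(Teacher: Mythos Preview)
Your proposal is correct and follows essentially the same approach as the paper: for the first claim you use $A_1$--linearity of the difference together with Proposition~\ref{p:autobound}, and for the second you subtract the two instances of Lemma~\ref{l:innercomp} so that the $[D,\inn{x_1,x_2}]$ terms cancel. The paper's proof is identical in structure, merely omitting the density/continuity bookkeeping in the final extension step that you spell out.
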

\begin{proof}
The first claim is a consequence of Proposition \ref{p:autobound} since the connection
property implies that the difference $\Na_{D} - \wit \Na_{D}$ is
$A_1$--linear.

Hence we get a completely bounded operator 
$c(\Na_{D} - \wit \Na_{D}) : X \hot_A Y \to X \hot_A Y$.

To prove the last claim 
let us fix some elements $x_1,x_2 \in X_1$ and $y_1,y_2 \in Y$. By Lemma
\ref{l:innercomp} we can calculate as follows
\[
\begin{split}
& \binn{ c( \Na_{D} - \wit\Na_{D})(x_1 \ot y_1),x_2 \ot y_2} \\
& \q = \binn{x_1 \ot y_1, c(\Na_{D})(x_2 \ot y_2)}
- \binn{y_1, [D,\inn{x_1,x_2}](y_2)} \\
& \qq - \binn{x_1 \ot y_1, c(\wit\Na_{D})(x_2 \ot y_2)}
+ \binn{y_1, [D,\inn{x_1,x_2}](y_2)} \\
& \q = \binn{x_1 \ot y_1, c(\Na_{D} - \wit\Na_{D})(x_2 \ot y_2)}.
\end{split}
\]
But this computation proves the proposition.
\end{proof}

%%%%%%%%%%%%%%%%%%%%%%%%%%%%%%%%%%%%%%%8<%%%%%%%%%%%%%%%%%%%%%%%%%%%%%%%%%%}}}
%% Sec 5 %%
\section{Selfadjointness and regularity of the unbounded product operator}\label{s:selfdirsch}%{{{
Let $(X_1=P\StM{A_1},A_1)$  be as in Convention \ref{StaConA}
with $A_1$ being $\si$--unital; as usual $X = P \StM{A}$ denotes the associated Hilbert
$C^*$--module. Assume furthermore, that $D_1 : \sD(D_1) \to X$ 
is an unbounded selfadjoint and regular operator on $X$.

Secondly, we assume that we are given a triple $(A,Y,D_2)$
as in Convention \ref{StaConB} and
assume that the action $\pi : A \to \B(Y)$ is essential. 
Furthermore, we will assume that the action $A \to \B(\Om^1_{D_2})$ is essential
as well, \cf Def. \ref{def:DForms}.

We assume that these assumptions are in effect for the remainder of this
section. The aim of this section is to prove, under an additional commutator
hypothesis, the selfadjointness and regularity of the
\emph{unbounded product operator} defined by
\begin{equation}\label{eq:UnbProdOp}  %110814
\begin{split}
D_1 \ti_\Na D_2  := &\pmat{
0 & D_1 \ot 1 - i\, 1 \ot_{\Na} D_2 \\
D_1 \ot 1  + i\, 1 \ot_{\Na} D_2 & 0
} \\ 
& :\bigl( \sD(D_1 \ot 1) \cap \sD(1 \ot_{\Na} D_2) \bigr)^2  \to (X \hot_A Y)^2.
\end{split}
\end{equation}
Here $\Na : X_1 \to X \hot_A \B(Y)$ is \emph{any} hermitian
$D_2$--connection. 
%To be more precise, we shall see that the boundedness of
%the commutator operator $[D_1 \ot 1, 1 \ot_{\Na_0} D_2](D_1 \ot 1 - i)^{-1}$
%for some \emph{fixed} hermitian $D_2$--connection $\Na_0$ is sufficient for
%the unbounded product operator to be selfadjoint and regular.

%We will also present a treatment of the geometric example where $D_1$ is given
%by a family of selfadjoint operators parametrized by a complete manifold,
%whereas $D_2$ is a first order differential operator of finite propagation
%speed.

%%%%%%%%%%%%%%%%%%%%%%%%%%%%%%%%%%%%%%%%%8<%%%%%%%%%%%%%%%%%%%%%%%%%%%%%%%%
\subsection{Selfadjointness and regularity of $1 \ot_{\Na} D_2$}\label{selfregright}
We start with a discussion of the right leg $1 \ot_{\Na} D_2$ of the unbounded
product operator for which no additional assumptions are needed.

The first thing to do is to investigate the situation where the hermitian
$D_2$--connection is the Gra{\ss}mann $D_2$--connection constructed in
Subsection \ref{levicivi}. The case of a general hermitian $D_2$--connection
will then follow from the comparison result in Proposition \ref{p:compself}.

We let $\diag(D_2) : \sD(\diag(D_2)) \to Y^\infty$ denote the
selfadjoint and regular diagonal operator induced by $D_2 : \sD(D_2) \to
Y$. Now, since the action $\pi : A \to \B(Y)$ is assumed to be essential we
have an isomorphism $A^\infty \hot_A Y \cong Y^\infty$ of Hilbert
$C^*$--modules. In particular, we can make sense of the projection $P \ot 1 :
Y^\infty \to Y^{\infty}$. We define the unbounded operator $1
\ot_{\Gc} D_2$ as the composition
\[
\begin{CD}
1 \ot_{\Gc} D_2 \, :\,
\sD(1 \ot_{\Gc} D_2) @>>> \sD(\diag(D_2)) 
@>\diag(D_2)>> Y^\infty
@>(P \ot 1)>> X \hot_A Y.
\end{CD}
\]
Here the domain is given by the intersection $\sD(1 \ot_{\Gc} D_2) :=
\sD(\diag(D_2)) \cap (X \hot_A Y)$.

\begin{lemma}\label{expform}
For each $x \in X_1$ and each $y \in \sD(D_2)$ we have the explicit formula
\[
(1 \ot_{\Gc} D_2 )(x \ot y) 
= x \ot D_2(y) + c(\Gc_{D_2})(x \ot y).
\]
In particular, the unbounded operator $1 \ot_{\Gc} D_2$
is densely defined.
\end{lemma}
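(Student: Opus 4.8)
The plan is to unwind the definition of $1 \ot_{\Gc} D_2$ on the elementary tensor $x \ot y$ with $x = \{a_i\} \in X_1$ (a finite sequence, or a limit of such) and $y \in \sD(D_2)$, and to track how it sits inside $A^\infty \hot_A Y \cong Y^\infty$. First I would note that under the isomorphism $A^\infty \hot_A Y \cong Y^\infty$ the element $\{a_i\} \ot y$ corresponds to the sequence $\{a_i \cdot y\}_i \in Y^\infty$, and that each $a_i \cdot y$ lies in $\sD(D_2)$ because of Convention \ref{StaConB}\eqref{StaNot2a}. Hence $\{a_i\} \ot y \in \sD(\diag(D_2)) \cap (X \hot_A Y) = \sD(1 \ot_{\Gc} D_2)$, which already gives density since such elementary tensors span a dense submodule. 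Applying $\diag(D_2)$ yields the sequence $\{D_2(a_i \cdot y)\}_i$, and using the Leibniz-type identity $D_2(a_i \cdot y) = a_i \cdot D_2(y) + [D_2, a_i](y)$ we split this as $\{a_i \cdot D_2(y)\}_i + \{[D_2,a_i](y)\}_i$.

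Next I would identify the two pieces after applying $P \ot 1$. The first sequence $\{a_i \cdot D_2(y)\}_i$ is exactly the image of $\{a_i\} \ot D_2(y)$ under $A^\infty \hot_A Y \cong Y^\infty$, and since $\{a_i\} = x \in X = PA^\infty$ we have $(P \ot 1)(\{a_i\} \ot D_2(y)) = x \ot D_2(y)$. For the second sequence, I would recognize $\{[D_2,a_i](y)\}_i$ as the image of $\{[D_2,a_i]\}_i \ot y \in (\Om^1_{D_2})^\infty \ot Y$ under the contraction-type identification $(\Om^1_{D_2})^\infty \cong A^\infty \hot_A \Om^1_{D_2}$ followed by the action of $\Om^1_{D_2}$ on $Y$; more precisely, this is precisely the composite $c \circ ((P \ot 1) \ot 1) \circ ([D_2,\cdot] \ot 1)$ applied to $x \ot y$, which by Definition \ref{def:LCConn} is exactly $c(\Gc_{D_2})(x \ot y)$. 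Adding the two contributions gives the claimed formula $(1 \ot_{\Gc} D_2)(x \ot y) = x \ot D_2(y) + c(\Gc_{D_2})(x \ot y)$.

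The routine bookkeeping here is the repeated use of the identifications $A^\infty \hot_A Y \cong Y^\infty$ and $(\Om^1_{D_2})^\infty \cong A^\infty \hot_A \Om^1_{D_2}$ (the latter being where the essentiality hypotheses on the $A$-actions are used), together with the compatibility of the projection $P$ with these identifications. The one point requiring a little care, which I would treat as the main obstacle, is the passage from finite sequences $x = \{a_i\}_{i \le N} \in c_0(A_1)$, for which the computation above is an honest finite sum, to general $x \in X_1$: one must check that both sides of the asserted formula are continuous in $x$ in the appropriate sense on the domain in question, i.e.\ that $x \mapsto c(\Gc_{D_2})(x \ot y)$ is norm-continuous (immediate from complete boundedness of $\Gc_{D_2}$ and boundedness of the contraction $c$) and that $x \mapsto x \ot D_2(y)$ together with $x \mapsto (1 \ot_{\Gc} D_2)(x \ot y)$ behave well — but in fact for the stated lemma it suffices to verify the formula on $x \in X_1$ of the form $\{a_i\}$ with $a_i \in A_1$, and density of elementary tensors $x \ot y$, $x \in X_1$, $y \in \sD(D_2)$, in $X \hot_A Y$ then yields the final density assertion.
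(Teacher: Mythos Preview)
Your proposal is correct and follows essentially the same approach as the paper: write $x=\{a_i\}\in X_1=PA_1^\infty$, identify $x\ot y$ with $\{a_i\cdot y\}\in Y^\infty$, apply $\diag(D_2)$ and the Leibniz identity $D_2(a_i\cdot y)=a_i\cdot D_2(y)+[D_2,a_i](y)$, then project by $P\ot 1$ and recognize the two pieces. The paper carries this out in a single displayed chain (working from right to left rather than left to right) and explicitly notes that it has ``suppressed various maps in order to obtain a cleaner computation''; your extra paragraph on the passage from finite sequences to general $x\in X_1$ is reasonable bookkeeping that the paper simply elides.
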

\begin{proof}
Let us fix elements $x = \{a_i\} \in X_1 = PA_1^\infty$ and $y \in
\sD(D_2)$. We then have that
\[
\begin{split}
& x \ot D_2(y) + c(\Gc_{D_2})(x \ot y)
= (P \ot 1)\{a_i \cd D_2(y)\} + (P \ot 1)\{[D_2,a_i](y)\} \\
& \q = (P \ot 1)\{D_2(a_i \cd y)\}
= (P \ot 1)\diag(D_2)\{a_i \cd y\}
= (1 \ot_{\Gc} D_2)(x \ot y).
\end{split}
\]
But this is the desired identity.
\end{proof}

Before we continue we remark that each element $T \in \Mbar{A_1}$
determines both a completely bounded operator $T : A_1^\infty \to A_1^\infty$
on the standard module and a bounded adjointable operator $T : Y^\infty \to
Y^\infty$. We will make use of this observation in the next lemmas.
%
% Before we continue we recall a way of obtaining the operator space structure
% on the Hilbert $C^*$--module $\B(Y)^\infty$ : For each matrix $T \in
% M_m(\B(Y)^\infty)$ one can define the norm of $T$ as the operator norm of
% the adjointable operator $T : Y^m \to (Y^\infty)^m$. We will use this fact in
% the next lemma.

\begin{lemma}\label{commcompact}
 Let $T \in \Mbar{A_1}$. Then the associated bounded adjointable operator $T
: Y^\infty \to Y^\infty$ preserves the domain of $\diag(D_2)$ and the
commutator $[\diag(D_2),T] = [D_2,T] : \sD(\diag(D_2)) \to Y^\infty$
extends to a bounded adjointable operator with the estimate $\|[D_2,T]\| \leq
\|T\|_{A_1} \cd \|[D_2,\cd]\|_{\cb}$ on the operator norm.
\end{lemma}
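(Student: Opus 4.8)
The plan is to first treat the case of a \emph{finite} matrix $T$ over $A_1$ and then pass to the completion $\Mbar{A_1}$ by an approximation argument, with the closedness of $\diag(D_2)$ doing the essential work.

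First I would suppose $T = \{a_{ij}\}$ has only finitely many nonzero entries, all in $A_1$. By item \eqref{StaNot2a} of Convention \ref{StaConB} each $a_{ij}$ preserves $\sD(D_2)$, and a straightforward entrywise computation shows that then $T$ preserves $\sD(\diag(D_2))$ and that $[\diag(D_2),T]$ is the finite matrix $\{[D_2,a_{ij}]\}$ over $\B(Y)$ obtained by applying the derivation $[D_2,\cd]$ to each entry. Such a finite matrix defines a bounded adjointable operator on $Y^\infty$, and since $[D_2,\cd] : A_1 \to \B(Y)$ is completely bounded (Convention \ref{StaConB}, item \eqref{StaNot2b}) the estimate $\|[\diag(D_2),T]\| \le \|[D_2,\cd]\|_{\cb}\cd\|T\|_{A_1}$ is then immediate from the definition of the completely bounded norm.

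For general $T \in \Mbar{A_1}$ I would use that the completely bounded derivation $[D_2,\cd]$ extends to a bounded map $\Mbar{A_1} \to \Mbar{\B(Y)}$ of norm at most $\|[D_2,\cd]\|_{\cb}$, and that $\Mbar{\B(Y)}$ embeds into the adjointable operators $\B(Y^\infty)$. This produces a bounded adjointable operator $S := [D_2,\cd](T)$ on $Y^\infty$ already satisfying the asserted estimate, and it remains only to identify $S$ with $[\diag(D_2),T]$. I would pick finite matrices $T_n \to T$ in $\Mbar{A_1}$, so that $T_n \to T$ and $[\diag(D_2),T_n] \to S$ in $\B(Y^\infty)$; then for $\xi \in \sD(\diag(D_2))$ we have $T_n\xi \to T\xi$ while
\[
\diag(D_2)(T_n\xi) = T_n\,\diag(D_2)\xi + [\diag(D_2),T_n]\xi \longrightarrow T\,\diag(D_2)\xi + S\xi ,
\]
so closedness of the selfadjoint regular operator $\diag(D_2)$ forces $T\xi \in \sD(\diag(D_2))$ and $\diag(D_2)(T\xi) = T\,\diag(D_2)\xi + S\xi$. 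Hence $T$ preserves $\sD(\diag(D_2))$ and $[\diag(D_2),T] = [D_2,T]$ extends to $S$, with the claimed norm bound. The only step requiring care is this last one: one must check that the \emph{entire} domain of $\diag(D_2)$, not merely a dense subspace, is preserved, which is exactly where closedness enters; everything else is bookkeeping with the operator-space completions.
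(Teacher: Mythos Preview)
Your proof is correct and follows exactly the approach the paper indicates: the paper's own proof is the single sentence ``The statement follows by approximating $T$ with finite matrices and by using the complete boundedness of the commutator $[D_2,\cd]$,'' and you have simply written out what that sentence means, including the closedness step needed to pass the domain preservation from the approximants $T_n$ to $T$.
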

\begin{proof}
The statement follows by approximating $T$ with finite matrices and by using
the complete boundedness of the commutator $[D_2,\cd]$.
\end{proof}

The result of Lemma \ref{commcompact} allows us to analyze the relation
between the projection $P \ot 1$ and the diagonal operator $\diag(D_2)$. We
recall that our operator $*$--algebra $A_1$ is assumed to be $\si$--unital.

\begin{lemma}\label{commbounded}
The projection $P \ot 1  : Y^\infty \to Y^\infty$ preserves the domain of the
diagonal operator $\diag(D_2)$ and the commutator
\[
[P \ot 1,\diag(D_2)] : \sD(\diag(D_2)) \to Y^\infty
\]
extends to a bounded adjointable operator on the Hilbert $C^*$--module $Y^\infty$.
\end{lemma}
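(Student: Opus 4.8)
The plan is to reduce the statement about the projection $P\ot 1$ on $Y^\infty$ to the already-established Lemma \ref{commcompact} about elements $T\in\Mbar{A_1}$ by approximating $P$ suitably. The obstacle is that $P$ itself is only a completely bounded idempotent on $\StM{A_1}$, not literally an element of $\Mbar{A_1}$, so we cannot apply Lemma \ref{commcompact} directly to $P$; we must factor $P$ through the approximate-identity operators $\te_{w^m,w^m}$ coming from Proposition \ref{p:approxcompact}.

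First I would recall that, since $A_1$ is $\si$--unital, Proposition \ref{p:approxcompact} supplies a bounded sequence $\{w^m\}\su c_0(X_1)$ with $\te_{w^m,w^m}(\rho)\to\rho$ for all $\rho\in X_1$ and $\sup_m\|\te_{w^m,w^m}\|_\cb<\infty$. Writing $T_m:=(w^m)^t\cd((w^m)^t)^\da\in\Mbar{A_1}$ for the infinite matrix implementing $\te_{w^m,w^m}$ (as remarked after \Eqref{eq:1108141}), we have $P\,T_m = T_m$, hence $P\ot 1$ agrees with $T_m$ on the image of $T_m$ acting on $Y^\infty$; more importantly the operators $T_m$ converge strongly to $P\ot 1$ on $Y^\infty$ (first on the dense submodule $A^\infty\hot_A Y\cong Y^\infty$ where strong convergence $T_m\to P$ on $\StM{A}$ gives it, then everywhere by the uniform bound on $\|T_m\|$ which follows from $\sup_m\|\te_{w^m,w^m}\|_\cb<\infty$ together with Remark \ref{r:CompleteInclusion}.2).

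Next I would show $P\ot 1$ preserves $\sD(\diag(D_2))$. Fix $\eta\in\sD(\diag(D_2))$. By Lemma \ref{commcompact} each $T_m$ preserves $\sD(\diag(D_2))$ and $\diag(D_2)T_m\eta = T_m\diag(D_2)\eta + [\diag(D_2),T_m]\eta$, with $\|[\diag(D_2),T_m]\|\le\|T_m\|_{A_1}\cd\|[D_2,\cd]\|_\cb$. To pass to the limit I need uniform control of $\|T_m\|_{A_1}$, i.e. a bound on $\|T_m\|_\cb$ as operators on $\StM{A_1}$; this is exactly $\sup_m\|\te_{w^m,w^m}\|_\cb<\infty$ from Proposition \ref{p:approxcompact}. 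Then $T_m\eta\to(P\ot1)\eta$ and $\{\diag(D_2)T_m\eta\}$ is bounded; since $\diag(D_2)$ is closed (it is selfadjoint and regular) and $T_m\diag(D_2)\eta\to (P\ot1)\diag(D_2)\eta$, a weak-$*$ compactness / closedness argument in the Hilbert $C^*$--module $Y^\infty$ shows $(P\ot1)\eta\in\sD(\diag(D_2))$ and $\diag(D_2)(P\ot1)\eta=\lim_m\diag(D_2)T_m\eta$. The main subtlety here — and where I would spend care — is that closed graphs plus norm-bounded sequences do not automatically give convergence in a Hilbert $C^*$--module as they would in a Hilbert space; one uses instead that $P\ot 1$ is bounded and that $[\diag(D_2),T_m]$ is uniformly bounded, so that $\diag(D_2)T_m\eta - \diag(D_2)T_k\eta = T_m\diag(D_2)\eta - T_k\diag(D_2)\eta + ([\diag(D_2),T_m]-[\diag(D_2),T_k])\eta$, and the first difference is Cauchy while the commutator terms, evaluated on the fixed vector $\eta$, are controlled because $[\diag(D_2),T_m]\to[\diag(D_2),P\ot1]$ strongly (again reducing to the dense submodule). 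Finally, having $[P\ot1,\diag(D_2)]$ defined on the core $\sD(\diag(D_2))$ with the uniform bound $\|[\diag(D_2),T_m]\|\le\|T_m\|_{A_1}\cd\|[D_2,\cd]\|_\cb\le(\sup_m\|\te_{w^m,w^m}\|_\cb)\cd\|[D_2,\cd]\|_\cb$ inherited in the limit, I conclude $[P\ot1,\diag(D_2)]$ extends to a bounded operator; adjointability then follows by symmetry, since the same argument applied to $P\ot1$ (which is selfadjoint) on $\sD(\diag(D_2))$ shows the formal adjoint of $[P\ot1,\diag(D_2)]$ is $-[P\ot1,\diag(D_2)]$ on the same core.
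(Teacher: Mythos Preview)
Your approach is essentially the paper's: approximate $P\ot 1$ by matrices $T_m\in\Mbar{A_1}$ coming from Proposition~\ref{p:approxcompact}, invoke Lemma~\ref{commcompact} for each $T_m$ to get uniformly bounded commutators, and pass to the strong limit using closedness of $\diag(D_2)$. The paper uses $\theta_m=(Pw^m)^t((w^m)^t)^*$ with $w^m\in c_0(A_1^\infty)$; your $T_m$ with $w^m\in c_0(X_1)$ amounts to the same thing.

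Two points deserve tightening. First, the strong convergence of $[\diag(D_2),T_m]$ on a dense set is the crux, and you only gesture at it. The paper makes this explicit: on elementary tensors $\alpha\ot z$ with $\alpha\in A_1^\infty$ and $z\in Y$ one has
\[
[\diag(D_2),T_m](\alpha\ot z)=[D_2,T_m(\alpha)](z)-T_m\bigl([D_2,\alpha](z)\bigr),
\]
and both terms converge because $T_m\to P$ strongly on $A_1^\infty$ and $[D_2,\cd]$ is completely bounded. This one line, together with the uniform bound from Lemma~\ref{commcompact}, gives strong convergence everywhere; you should include it. Second, your exposition refers to the limit $[\diag(D_2),P\ot1]$ before it has been shown to exist, which reads as circular. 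The clean order is: establish that $\{[\diag(D_2),T_m]\}$ converges strongly to some bounded adjointable $L$; then for $\eta\in\sD(\diag(D_2))$ write $\diag(D_2)T_m\eta=T_m\diag(D_2)\eta+[\diag(D_2),T_m]\eta$, observe both terms converge, and invoke closedness of $\diag(D_2)$ to conclude $(P\ot1)\eta\in\sD(\diag(D_2))$ with $[P\ot1,\diag(D_2)]=-L$. No weak-$*$ compactness is needed. Adjointability is automatic since $L$ is the strong limit of uniformly bounded adjointable operators whose adjoints $[\diag(D_2),T_m^*]$ converge by the same argument.
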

\begin{proof} By Proposition \ref{p:approxcompact} there exists
as sequence $\{w^m\}\su c_0(A_1^\infty)$ such that 
$\sup\|(w^m)^t\|_{A_1^\infty} < \infty$ and
$\te_{w^m,w^m}(\rho) \to \rho$ for all $\rho \in A_1^\infty$. 
Proposition \ref{p:approxcompact}. 
Put $\te_m = (Pw^m)^t ((w^m)^t)^* \in \Mbar{A_1}$, \cf the
paragraph before Proposition \ref{p:approxcompact}. Then, according to loc.~cit.,
$\te_m(y) \to (P \ot 1)(y)$ for all $y \in Y^\infty$ and
\[
\sup_{m \in
  \nn}\|\te_m\|_{\cb} \leq \sup_{m \in \nn} \|\te_m\|_{A_1} \leq C \cd \|P\|_{\cb}
\cd \sup_{m \in \nn} \|(w^m)^t\|^2_{A_1^\infty} < \infty.
\]
The statement of the lemma therefore
follows from Lemma \ref{commcompact} if we can prove that the sequence
$\{[\diag(D_2),\te_m](y)\}$ converges for each $y$ in a dense
subspace of $Y^\infty$, therefore it suffices to consider
vectors of the form $\al \ot z$ where $\al\in A_1^\infty$ and $z \in Y$. 
However, for elements of this kind we have
\[
[D_2,\te_m](\al \ot z) = [D_2, \te_m(\al)](z) - \te_m[D_2,\al](z),
\]
where the right hand side converges to $[D_2, P(\al)](z) - (P \ot
1)[D_2,\al](z)$.
\end{proof}

We are now ready to prove the main result of this section: The selfadjointness
and regularity of the unbounded operator $1 \ot_{\Gc} D_2$.

\begin{theorem}\label{t:extright}
Let $(X_1=P\StM{A_1},A_1)$  be as in Convention \ref{StaConA}
with $A_1$ being $\si$--unital and let the triple $(A,Y,D_2)$
satisfy the assumptions of Convention \ref{StaConB}; furthermore,
assume that the actions $\pi : A \to \B(Y)$ and $A \to \B(\Om^1_{D_2})$ 
are essential.

Then for any hermitian
$D_2$--connection $\Na_{D_2} : X_1 \to X \hot_A \B(Y)$ 
the unbounded operator 
\[
1 \ot_{\Na} D_2 := 1 \ot_{\Gc} D_2 + c(\Na_{D_2} -
\Gc_{D_2})
: (X \hot_A Y) \cap \sD(\diag(D_2)) \to X \hot_A Y
\]
is selfadjoint and regular. Furthermore we have the explicit formula
\[
(1 \ot_{\Na} D_2)(x \ot y)
= x \ot D_2(y) + c(\Na_{D_2})(x \ot y)
\]
whenever $x \in X_1$ and $y \in \sD(D_2)$.
\end{theorem}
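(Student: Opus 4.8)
The plan is to reduce everything to the case of the Grassmann connection $\Gc_{D_2}$ and then to exploit Lemma \ref{commbounded}. By Proposition \ref{p:compself} the difference $\Na_{D_2}-\Gc_{D_2}$ extends to a completely bounded $A$--linear map $X\to X\hot_A\B(Y)$, and the contraction $c(\Na_{D_2}-\Gc_{D_2})$ is a \emph{bounded selfadjoint} operator on $X\hot_A Y$. Hence $1\ot_\Na D_2 = 1\ot_{\Gc} D_2 + c(\Na_{D_2}-\Gc_{D_2})$ is a bounded selfadjoint perturbation of $1\ot_{\Gc} D_2$ with unchanged domain $(X\hot_A Y)\cap\sD(\diag(D_2))$. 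Since a bounded selfadjoint (in fact merely adjointable) perturbation of a selfadjoint regular operator is again selfadjoint and regular, it suffices to prove the assertion for $\Na_{D_2}=\Gc_{D_2}$. Moreover, once the explicit formula $(1\ot_{\Gc}D_2)(x\ot y)=x\ot D_2(y)+c(\Gc_{D_2})(x\ot y)$ of Lemma \ref{expform} is in place, the general formula follows by adding $c(\Na_{D_2}-\Gc_{D_2})(x\ot y)$, the two occurrences of $c(\Gc_{D_2})(x\ot y)$ cancelling.

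For the operator $1\ot_{\Gc}D_2$ itself I would argue as follows. The diagonal operator $\diag(D_2)$ on $Y^\infty$ is selfadjoint and regular. By Lemma \ref{commbounded} the orthogonal projection $P\ot1$ preserves $\sD(\diag(D_2))$ and $[P\ot1,\diag(D_2)]$ extends to a bounded adjointable operator $B$ on $Y^\infty$; since $\diag(D_2)$ is selfadjoint, $B$ is skew--adjoint. Consider the symmetric operator $D':=(P\ot1)\diag(D_2)(P\ot1)+(1-P\ot1)\diag(D_2)(1-P\ot1)$ with domain $\sD(\diag(D_2))$. A short computation, using that on $\sD(\diag(D_2))$ one has $(P\ot1)\diag(D_2)(1-P\ot1)=B(1-P\ot1)$ and $(1-P\ot1)\diag(D_2)(P\ot1)=-(1-P\ot1)B$, shows that $D'-\diag(D_2)$ extends to a \emph{bounded selfadjoint} operator on $Y^\infty$; here skew--adjointness of $B$ is exactly what makes the difference selfadjoint. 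Hence $D'$ is selfadjoint and regular. By construction $D'$ commutes with the orthogonal projection $P\ot1$, so it restricts to a selfadjoint and regular operator on the complemented submodule $(P\ot1)Y^\infty\cong X\hot_A Y$, with domain $(P\ot1)\sD(\diag(D_2))=\sD(\diag(D_2))\cap(X\hot_A Y)$. That restriction is precisely the operator $1\ot_{\Gc}D_2$ defined before Lemma \ref{expform}, and its domain is dense because $P\ot1$ maps the dense submodule $\sD(\diag(D_2))$ onto a dense submodule of $(P\ot1)Y^\infty$.

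The genuinely substantial input is Lemma \ref{commbounded}, that $[P\ot1,\diag(D_2)]$ is bounded; this is where $\sigma$--unitality of $A_1$ (through Proposition \ref{p:approxcompact}) and the complete boundedness of the derivation $[D_2,\cd]:A_1\to\B(Y)$ (through Lemma \ref{commcompact}) really enter. The remaining ingredients are standard facts about regular operators in Hilbert $C^*$--modules: that a bounded selfadjoint perturbation of a selfadjoint regular operator stays selfadjoint and regular, and that a selfadjoint regular operator commuting with an orthogonal projection $Q$ restricts to a selfadjoint regular operator on $QE$ (equivalently, one passes to the direct sum decomposition $E=QE\op(1-Q)E$ along which $D'$ and its resolvents $(D'\pm i)^{-1}$ both split). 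I expect no further obstacle; the only points needing care are the bookkeeping with domains and the verification that the perturbation $D'-\diag(D_2)$ is genuinely selfadjoint, which is handled by the skew--adjointness of the commutator noted above.
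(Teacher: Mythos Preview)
Your proposal is correct and follows essentially the same route as the paper: reduce to the Gra{\ss}mann connection via Proposition \ref{p:compself} and Lemma \ref{expform}, then use Lemma \ref{commbounded} to show that the ``block--diagonal'' operator $Q\diag(D_2)Q+(1-Q)\diag(D_2)(1-Q)$ differs from $\diag(D_2)$ by a bounded selfadjoint operator, whence its compression $1\ot_{\Gc}D_2$ to $QY^\infty$ is selfadjoint and regular. Your explicit remarks on the skew--adjointness of $[Q,\diag(D_2)]$ and on why a selfadjoint regular operator commuting with an orthogonal projection restricts well are welcome clarifications of points the paper leaves implicit.
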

\begin{proof}
Let us first note that the class of selfadjoint regular operators is stable under
bounded adjointable perturbations. This follows easily from a 
Neumann series argument and,\eg \cite[Lemma 9.8]{Lan:HCM}, \cf also \cite{Wor:UEA}. 
To express it differently, 
in a slightly overblown fashion, it also follows from the Kato--Rellich Theorem
for Hilbert $C^*$--modules \cite[Theorem 4.4]{KaaLes:LGP}.

We may assume that $\nabla$ is the Gra{\ss}mann connecton $\Gc$.
The general case then follows from Proposition \ref{p:compself} and
Lemma \ref{expform}. 

So let $Q = P \ot 1$. The diagonal operator $\diag(D_2)$ can be
written
\[
\begin{split}
\diag(D_2) & = Q \diag(D_2) Q + (1 - Q) \diag(D_2) (1 - Q) \\
& \qq + Q \diag(D_2) (1 - Q) + (1 - Q) \diag(D_2) Q \\
& = Q \diag(D_2) Q + (1 - Q) \diag(D_2) (1 - Q) \\
& \qq + [Q,\diag(D_2)](1 - Q) - (1-Q)[Q, \diag(D_2)].
\end{split}
\]
Now, from Lemma \ref{commbounded} we infer that
\[
[Q,\diag(D_2)](1 - Q) - (1-Q)[Q, \diag(D_2)] :
Y^\infty \to Y^\infty
\]
is selfadjoint and bounded. The operator
\begin{equation}\label{eq:diagcomp}
Q\diag(D_2)Q
+ (1 - Q) \diag(D_2) (1 - Q) : \diag(D_2) \to Y^\infty
\end{equation}
thus differs from the selfadjoint regular operator $\diag(D_2)$ 
by a bounded selfadjoint operator. As noted at the beginning of this
proof this implies the selfadjointness and regularity of the operator
\eqref{eq:diagcomp}. But since $1 \ot_{\Gc} D_2$ is just the 
compression $Q \diag(D_2) Q$ we reach the conclusion.
\end{proof}

%%%%%%%%%%%%%%%%%%%%%%%%%%%%%%%%%%%%%%%%%8<%%%%%%%%%%%%%%%%%%%%%%%%%%%%%%%%

\subsection{Selfadjointness and regularity of the product operator}
We are now in a position where we can prove selfadjointness and regularity
results for unbounded product operators of the form $D_1 \ti_{\Na} D_2$ where
$\Na_{D_2}$ is a hermitian $D_2$--connection. See the beginning of Section
\ref{s:selfdirsch}. Our main tool will be the Theorem \ref{t:KaaLes1} on
selfadjointness and regularity for sums of operators:

In our case the roles of $S$ and $T$ are played by $D_1 \ot 1$ and $1
\ot_{\Na} D_2$. The Hilbert $C^*$--module $E$ in loc.~cit.~is given 
by the interior tensor product $X \hot_A Y$. The selfadjointness and regularity of
$D_1$ is easily seen to imply the selfadjointness and regularity of
the unbounded operator $D_1 \ot 1 : \sD(D_1) \hot_A Y \to X \hot_A Y$.
The unbounded operator $1 \ot_{\Na} D_2$ is selfadjoint and regular for any hermitian
$D_2$--connection $\Na_{D_2}$ by Theorem \ref{t:extright}.

\begin{theorem}\label{t:regself} 
In the situation of Theorem \ref{t:extright} let in addition $D_1$ be
a selfadjoint regular operator on $X$.

Suppose that there exists a hermitian $D_2$--connection $\Na^0_{D_2} : X_1 \to
X \hot_A \B(Y)$ such that the conditions in Theorem \ref{t:KaaLes1} are
satisfied for $S := D_1 \ot 1$ and $T := 1 \ot_{\Na^0} D_2$. Then the
unbounded product operator
\[
\begin{split}
D_1 \ti_{\Na} D_2 & := \pmat{0 & D_1 \ot 1 - i\, 1 \ot_{\Na} D_2 \\
D_1 \ot 1 + i\, 1 \ot_{\Na} D_2 & 0} \\
& \q : \bigl(\sD(D_1 \ot 1) \cap \sD(1 \ot_{\Na} D_2)\bigr)^2 \to (X \hot_A Y)^2
\end{split}
\]
is selfadjoint and regular for \emph{any} hermitian $D_2$--connection
$\Na_{D_2} : X_1 \to X \hot_A \B(Y)$.
\end{theorem}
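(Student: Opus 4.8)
The plan is to first dispose of the distinguished connection $\Na^0$ by a direct appeal to Theorem \ref{t:KaaLes1}, and then to reach an arbitrary hermitian $D_2$--connection $\Na_{D_2}$ by a bounded adjointable perturbation argument, exactly in the spirit of the proof of Theorem \ref{t:extright}.

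For the connection $\Na^0$ I would apply Theorem \ref{t:KaaLes1} with $S := D_1 \ot 1$ and $T := 1 \ot_{\Na^0} D_2$ on the Hilbert $C^*$--module $E := X \hot_A Y$. The operator $S = D_1 \ot 1$ is selfadjoint and regular because $D_1$ is (its resolvent is $(D_1 \mp i)^{-1} \ot 1$), as recalled in the paragraph preceding the statement; the operator $T = 1 \ot_{\Na^0} D_2$ is selfadjoint and regular by Theorem \ref{t:extright}. By hypothesis the two conditions of Theorem \ref{t:KaaLes1} hold for this pair $(S,T)$, in particular there is a core $\sE$ for $T$ with the required resolvent and relative boundedness properties. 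Theorem \ref{t:KaaLes1} then applies verbatim and shows that
\[
D_1 \ti_{\Na^0} D_2 = \pmat{0 & D_1 \ot 1 - i\, 1 \ot_{\Na^0} D_2 \\ D_1 \ot 1 + i\, 1 \ot_{\Na^0} D_2 & 0}
\]
is selfadjoint and regular on $(X \hot_A Y)^2$ with domain $\bigl(\sD(D_1 \ot 1) \cap \sD(1 \ot_{\Na^0} D_2)\bigr)^2$.

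Now let $\Na_{D_2}$ be an arbitrary hermitian $D_2$--connection. By the definition of $1 \ot_{\Na} D_2$ in Theorem \ref{t:extright} (namely $1 \ot_{\Na} D_2 = 1 \ot_{\Gc} D_2 + c(\Na_{D_2} - \Gc_{D_2})$, and similarly for $\Na^0$) together with the linearity of the contraction map $c$ of \Eqref{eq:1108052}, we obtain
\[
1 \ot_{\Na} D_2 = 1 \ot_{\Na^0} D_2 + c\bigl(\Na_{D_2} - \Na^0_{D_2}\bigr) \quad \text{on } (X \hot_A Y) \cap \sD(\diag(D_2)).
\]
By Proposition \ref{p:compself} the operator $B := c(\Na_{D_2} - \Na^0_{D_2})$ extends to a completely bounded, hence bounded adjointable, \emph{selfadjoint} operator on $X \hot_A Y$. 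Therefore
\[
D_1 \ti_{\Na} D_2 = D_1 \ti_{\Na^0} D_2 + \pmat{0 & -i\, B \\ i\, B & 0},
\]
where the perturbation is a bounded adjointable selfadjoint operator on $(X \hot_A Y)^2$; in particular the left- and right-hand sides have the same domain, which is $\bigl(\sD(D_1 \ot 1) \cap \sD(1 \ot_{\Na} D_2)\bigr)^2$. Since the class of selfadjoint regular operators on a Hilbert $C^*$--module is stable under bounded adjointable selfadjoint perturbations --- by a Neumann series argument, or the Kato--Rellich theorem \cite[Theorem 4.4]{KaaLes:LGP}, exactly as invoked in the proof of Theorem \ref{t:extright} --- it follows that $D_1 \ti_{\Na} D_2$ is selfadjoint and regular.

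The substantive content is entirely carried by Theorems \ref{t:KaaLes1} and \ref{t:extright} and by Proposition \ref{p:compself}; the only point demanding any care here is the bookkeeping of domains, i.e.\ confirming that changing the connection from $\Na^0$ to $\Na$ alters neither $1 \ot_{\Na} D_2$ nor $D_1 \ti_{\Na} D_2$ by more than a bounded adjointable operator. This is precisely what Proposition \ref{p:compself} delivers, so I do not anticipate any genuine obstacle at this stage.
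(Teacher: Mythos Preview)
Your proof is correct and follows exactly the approach of the paper: apply Theorem \ref{t:KaaLes1} to handle $\Na^0$, then invoke Proposition \ref{p:compself} to show that passing to an arbitrary $\Na$ is a bounded selfadjoint perturbation, which preserves selfadjointness and regularity. Your write-up is in fact more detailed than the paper's, explicitly identifying the perturbation matrix and noting the domain bookkeeping.
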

\begin{proof}
The selfadjointness and regularity of $D_1 \ti_{\Na^0} D_2$ is a consequence
of Theorem \ref{t:KaaLes1}. The statement for a general hermitian
$D_2$--connection follows since $D_1 \ti_{\Na^0} D_2 - D_1 \ti_{\Na} D_2$
extends to a selfadjoint bounded operator by Proposition \ref{p:compself}. Here
we use again the fact that the class of selfadjoint regular operators
is stable under selfadjoint bounded perturbations as already mentioned at the
beginning of the proof of Theorem \ref{t:extright}. 
\end{proof}
\mpar{clumsy, maybe make a Lemma}
\mpar{
(IT IS POSSIBLE TO IMPROVE THEOREM \ref{t:regself}. INDEED IT CAN BE PROVED THAT
$X_1 \ot_{A_1} \sD(D_2)$ IS A CORE FOR $1 \ot_{\Na} D_2$. I DON'T KNOW WHETHER
WE SHOULD INCLUDE THIS IMPROVEMENT. SEE THE OLD VERSION).}

%%%%%%%%%%%%%%%%%%%%%%%%%%%%%%%%%%%%%%%%%8<%%%%%%%%%%%%%%%%%%%%%%%%%%%%%%%%}}}
%% Sec 6 %%
\section{The interior product of unbounded Kasparov modules}\label{s:IPU}%{{{

Let $A$, $B$ and $C$ be three $C^*$--algebras. Let us start by recalling some
terminology from \cite{BaaJul:TBK}, \cf also \cite{Kuc:KKP}.

\begin{dfn}\label{def:UKM}
By an \emph{unbounded Kasparov $A$--$B$ module}  we will understand a pair
$(X,D)$ consisting of a countably generated $A$--$B$ Hilbert $C^*$--bimodule $X$ 
and an unbounded selfadjoint and regular operator $D : \sD(D) \to X$ on $X$ 
such that:
\begin{enumerate}
\item There is a dense $*$--subalgebra $\sA\su A$ such that each
 $a\in\sA$ maps $\dom(D)\to \dom(D)$ and 
 the commutator $[D,a] : \sD(D) \to X$ extends to a
  bounded operator.
\item The resolvent operator $a \cd (D - i)^{-1} \in \cK(X)$ is
  $B$--compact for all $a \in A$.
\end{enumerate}

We will say that $(X,D)$ is \emph{even} when we have a $\zz_2$--grading
operator $\gam \in \B(X)$ such that $\pi(a)\gam - \pi(a)\gam = 0$ for all
$a \in A$ and $D \gam + \gam D = 0$. 

An unbounded Kasparov $A$--$B$ module without a grading operator is referred
to as being \emph{odd}.
\end{dfn}

We remind the reader of Proposition \ref{p:OpStarAlg}  and Remark
\ref{r:CompleteInclusion}, 3. although in this section we will not make
use of the completion $A_1$ of $\sA$.

Let us fix an odd unbounded Kasparov $A$--$B$ module $(X,D_1)$ and an odd
unbounded Kasparov $B$--$C$ module $(Y,D_2)$. The aim of this section is to
find sufficient conditions for the existence of an ``unbounded product'' of
$(X,D_1)$ and $(Y,D_2)$. When it exists, the unbounded product will be an even
unbounded Kasparov $A$--$C$ module which depends on the choice of a connection
$\Na$ up to selfadjoint perturbations. The unbounded product operators will be
of the ``Dirac--Schr\"odinger--type'' $D_1 \ti_{\Na} D_2$ which we considered in
the previous section. Let us give some relevant definitions.

\begin{dfn}\label{def:EssKas}
$(Y,D_2)$ is called \emph{essential} if the action of $B$ on $Y$
is essential and the derivation $[D_2,\cd]$ is essential. 
That is, $B\cd Y$ is dense in $Y$ and $B \cd \Om^1_{D_2}$ is dense in
the $\Om^1_{D_2}$. Recall from \ref{def:DForms} that $\Om^1_{D_2} \su \B(Y)$ denotes the
smallest $C^*$--subalgebra such that $b \, , \, [D_2,b] \in \Om^1_{D_2}$ for
all $b$ in the dense $*$--subalgebra $\sB\su B$ according to Def.
\ref{def:UKM}. 
\end{dfn}

The next definition is related to the notion of a correspondence which appears
in the Ph.D.~thesis of B. Mesland \cite[Sec.~6]{Mes:UBK}. The
conditions which we require are however substantially weaker than those
advocated in loc.~cit.

\begin{dfn}\label{def:Correspondence}
Suppose that $(Y,D_2)$ is essential. By a \emph{correspondence} from $(X,D_1)$
to $(Y,D_2)$ we will understand a pair $(X_1,\Na^0)$ consisting of an operator
$*$--module $X_1$ over a $\si$--unital operator $*$--algebra $B_1$ and a
completely bounded hermitian $D_2$--connection $\Na^0 : X_1 \to X \hot_B
\B(Y)$ such that
\begin{enumerate}
\item The operator $*$--module $X_1 \su X$ is a dense subspace of $X$ and the
  operator $*$--algebra $B_1 \su B$ is a dense $*$--subalgebra of
  $B$. The inclusions are completely bounded and compatible with module
  structures and inner products.
\item Each $b\in\ B_1$ maps the domain of $D_2$ into itself and
 the derivation $[D_2,\cd] : B_1 \to \B(Y)$ is 
  completely bounded on $B_1$.
\item The commutator $[1 \ot_{\Na^0} D_2,a] : \sD(1 \ot_{\Na^0} D_2) \to X
  \hot_B Y$ is well--defined and extends to a bounded operator on $X \hot_B Y$
  for all $a \in \sA$.
 \item For any $\mu\in \R\setminus \{0\}$ the unbounded operator
\[
[D_1\ot 1, 1 \ot_{\Na^0} D_2](D_1 \ot 1 - i\cd\mu)^{-1} : \sD(1 \ot_{\Na^0}
D_2) \to X \hot_B Y
\]
is well--defined and extends to a bounded operator on $X \hot_B Y$.
\end{enumerate}
\end{dfn}

Here, (4) is an abbreviation for the properties (1) and (2) in 
Theorem \ref{t:KaaLes1} for $S=D_1\ot 1, T=1\ot_{\Na^0} D_2$.

Furthermore, we remark that the domain of $1 \ot_{\Na^0} D_2$ can be replaced by a core for
$1 \ot_{\Na^0} D_2$ in requirement (3) and (4) of Definition
\ref{def:Correspondence}.

\begin{dfn}
Suppose that $(X_1,\Na^0)$ is a correspondence from $(X,D_1)$ to
$(Y,D_2)$. Let $\Na_{D_2} : X_1 \to X \hot_B \B(Y)$ be a hermitian
$D_2$--connection. By the \emph{unbounded interior product} of $(X,D_1)$ and
$(Y,D_2)$ with respect to $\Na_{D_2}$ we will understand the pair $\big((X \hot_B
Y)^2, D_1 \ti_{\Na} D_2\big)$. Here $(X \hot_B Y)^2$ is a $\zz_2$--graded
$A$--$C$ Hilbert $C^*$--bimodule. The grading is given by the grading operator
$\gam := \diag(1,-1)$.
\end{dfn}

We shall see that the unbounded interior product is an unbounded even Kasparov
$A$--$C$ bimodule. We remark that the selfadjointness and regularity
condition was proved in Theorem \ref{t:regself}. Furthermore, the boundedness of
the commutator $[D_1 \ti_{\Na} D_2,a]$ for all $a\in \sA$ follows from the
third condition in Definition \ref{def:Correspondence}. The only real issue is therefore
compactness of the resolvent. This problem will occupy the rest of the
section. We remark that the unbounded interior product only depends on the
choice of connection up to selfadjoint perturbations. This is a consequence of
Proposition \ref{p:compself}.

We start with a small compactness result.

\begin{lemma}\label{l:diagcomp}
Suppose that $B$ is $\si$--unital and that the action $B \to Y$ is
essential. Let $K \in \cK(B^\infty,X)$ be a $B$--compact operator. Then for 
$z\in \C\setminus \R$ the bounded operator
\[
(K \ot 1) \big(\diag(D_2) - z \big)^{-1} \in \cK(B^\infty \hot_B Y, X
\hot_B Y)
\]
is $C$--compact. Here we have suppressed the isomorphism of Hilbert $C^*$--modules
$B^\infty \hot_B Y \cong Y^\infty$.
\end{lemma}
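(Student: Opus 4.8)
The plan is to reduce the statement to the scalar case and then use a density/approximation argument in the compact operators. First I would observe that a $B$--compact operator $K \in \cK(B^\infty, X)$ is, by definition of $\cK$, a norm limit of finite sums of ``rank--one'' operators $\te_{x,b} : \{b_i\} \mapsto x \cd \inn{b,\{b_i\}}$ with $x \in X$ and $b \in B^\infty$. Since $K \mapsto (K \ot 1)(\diag(D_2) - z)^{-1}$ is norm continuous and $\cK(B^\infty \hot_B Y, X \hot_B Y)$ is closed, it suffices to treat $K = \te_{x,b}$; and by a further reduction, since $\pi : B \to \B(Y)$ is essential and $B$ is $\si$--unital, one may absorb an approximate unit and reduce to $b$ of the form $e_j b_0$ with $b_0$ a single element of $B$. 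Tensoring, $\te_{x,e_j b_0} \ot 1$ becomes, under the identification $B^\infty \hot_B Y \cong Y^\infty$, essentially the operator sending $\{y_i\} \mapsto x \ot (b_0 \cd y_j)$, so everything is controlled by the single operator $y \mapsto x \ot (b_0 \cd y) = (x \ot 1) \circ \pi(b_0) : Y \to X \hot_B Y$.

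Thus the key step is to show that $(x \ot 1)\, \pi(b_0)\, (D_2 - z)^{-1} : Y \to X \hot_B Y$ is $C$--compact for $x \in X$, $b_0 \in B$. Here I would use that $(Y, D_2)$ is an unbounded Kasparov $B$--$C$ module, so by condition (2) of Definition \ref{def:UKM} the operator $\pi(b_0)(D_2 - z)^{-1}$ lies in $\cK(Y) = \cK_C(Y)$. The map $T \mapsto (x \ot 1)\, T$ sends $\cK_C(Y)$ into $\cK_C(Y, X \hot_B Y)$: indeed on rank--one operators $\te_{y_1,y_2}^Y$ it gives $y \mapsto (x \ot y_1)\inn{y_2, y}_C$, which is the rank--one operator $\te_{x \ot y_1,\, y_2}$ in $\cK_C(Y, X \hot_B Y)$, and one extends by norm density and continuity of $T \mapsto (x\ot 1)T$. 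Combining, $(x \ot 1)\pi(b_0)(D_2-z)^{-1} \in \cK_C(Y, X\hot_B Y)$, which is exactly what is needed.

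The remaining work is bookkeeping: one must check that under the isomorphism $B^\infty \hot_B Y \cong Y^\infty$ the operator $(\te_{x, e_j b_0} \ot 1)(\diag(D_2)-z)^{-1}$ really is the composition $Y^\infty \xrightarrow{\,(\diag(D_2)-z)^{-1}\,} Y^\infty \xrightarrow{\pi_j} Y \xrightarrow{(x\ot 1)\pi(b_0)} X \hot_B Y$ where $\pi_j$ is the $j$--th coordinate projection; since $\pi_j$ is adjointable and $(\diag(D_2)-z)^{-1}$ is bounded adjointable, $C$--compactness of the last arrow propagates. One small point requiring the essentiality hypothesis is that the identification $B^\infty \hot_B Y \cong Y^\infty$ and the good behaviour of the coordinate projections rely on $B \cd Y$ being dense in $Y$; this was recorded in the discussion preceding the lemma. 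I expect no genuine obstacle here — the main (minor) annoyance will be writing the reduction to rank--one $K$ and keeping the identification $B^\infty \hot_B Y \cong Y^\infty$ explicit enough that the coordinate projections, and hence the passage from $\diag(D_2)$ to $D_2$, are manifestly adjointable.
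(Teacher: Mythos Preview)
Your argument is correct. It differs from the paper's proof mainly in the choice of reduction. The paper does not pass to rank--one operators $\te_{x,b}$; instead it uses an approximate unit $\{u_m\}$ for $B$ to build approximate units $\te_m = \sum_{i=1}^m \te_{e_i u_m,\,e_i u_m}$ for $\cK(B^\infty)$, so that it suffices to show $(\te_m \ot 1)(\diag(D_2)-i)^{-1}$ is $C$--compact. This operator is then identified, via the exterior tensor product isomorphism $Y^\infty \cong H \hot Y$, with $p_m \ot \bigl(u_m u_m^*(D_2-i)^{-1}\bigr)$, a tensor product of a finite--rank projection and a $C$--compact operator. Your route via rank--one $K = \te_{x,e_j b_0}$ is a bit more elementary: it avoids the exterior tensor product identification and reduces directly to the single--coordinate statement $\pi(b_0)(D_2-z)^{-1}\in\cK_C(Y)$, then pushes compactness forward along the adjointable map $T_x : y \mapsto x \ot y$. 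One small expository point: your ``further reduction'' to $b = e_j b_0$ does not actually need $\si$--unitality or essentiality of the action --- it follows simply from density of finite sequences $c_0(B)$ in $B^\infty$ and linearity.
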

\begin{proof}
By the resolvent identity it suffices to prove the claim for $z=i$. 
Let $\{u_m\}$ denote the countable approximate unit for $B$. We then have a
countable approximate unit $\{\te_m\}$ for the compact operators on $B^\infty$
defined by $\te_m := \sum_{i=1}^m \te_{e_i \cd u_m, e_i \cd u_m} \in 
\cK(B^\infty)$. Here $e_i \cd u_m \in B^\infty$ is the vector in the standard
module with $u_m \in B$ in position $i \in \nn$ and zeros elsewhere. We
therefore only need to prove that the bounded operator $(\te_m \ot 1) \cd
\big(\diag(D_2) - i \big)^{-1} \in \B(Y^\infty)$ is $C$--compact for all $m
\in \nn$.

Let us fix some $m \in \nn$. Using the identification $Y^\infty \cong H \hot
Y$ where ``$\hot$'' denotes the exterior tensor product of Hilbert
$C^*$--modules we get the identity
\begin{equation}\label{eq:comid}
(\te_m \ot 1) \cd \big(\diag(D_2) - i \big)^{-1} 
= p_m \ot \Bigl(u_m u_m^* \cd (D_2 - i)^{-1}\Bigr) \in \B(H \hot Y).
\end{equation}
Here $p_m : H \to H$ denotes the finite rank orthogonal projection onto the
subspace $\T{span}_{\cc}\{e_i\}_{i=1}^m$ where $\{e_i\}$ is an orthonormal
basis for the separable Hilbert space $H$. Since both of the factors in the 
tensor product on the rhs of \eqref{eq:comid} are ($\C$-- resp. $C$--) compact 
we get that $(\te_m \ot 1) \cd \big(\diag(D_2) - i \big)^{-1} \in \cK(Y^\infty)$ 
is $C$--compact and the lemma is proved.
\end{proof}

%In order to pass from the resolvent of the diagonal operator to the resolvent
%of $1 \ot_{\Na} D_2$ we shall make use of the next well known lemma.

%\begin{lemma}\label{smalluse}
%Let $Z$ be a Hilbert $C^*$--module and let $D : \sD(D) \to Z$ be a regular
%selfadjoint operator on $Z$. Let $R \in \B(Z)$ be a bounded and selfadjoint
%operator. We then have the identity
%\[
%(D + R - i)^{-1} 
%= (D - i)^{-1} \big(1 -  R (D + R - i)^{-1}\big)
%\]
%of bounded adjointable operators.
%\end{lemma}

\begin{prop}\label{komright}
Suppose that condition \textup{(1)} and \textup{(2)}
in Definition \ref{def:Correspondence} are
satisfied. Let $\Na_{D_2} : X_1 \to X \hot_B \B(Y)$ be a hermitian
$D_2$--connection and let $K \in \cK(X)$ be a $B$--compact operator. Then the
bounded adjointable operator 
$(K \ot 1) (1 \ot_{\Na} D_2 - z)^{-1} \in \cK(X \hot_B Y)$ is $C$--compact
for all $z\in\C\setminus \R$.
\end{prop}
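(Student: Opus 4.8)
The idea is to reduce the general hermitian $D_2$--connection to the Gra{\ss}mann connection and then exploit the structure established in the previous subsection. First, since $(K \ot 1)(1 \ot_{\Na} D_2 - z)^{-1}$ differs from $(K \ot 1)(1 \ot_{\Gc} D_2 - z)^{-1}$ by a term of the form $(K \ot 1)(1 \ot_{\Gc} D_2 - z)^{-1} c(\Na - \Gc)(1 \ot_{\Na} D_2 - z)^{-1}$ — using the second resolvent identity together with the fact that $1 \ot_{\Na} D_2 - (1 \ot_{\Gc} D_2)$ extends to a bounded operator by Proposition \ref{p:compself} — and since the compact operators form an ideal, it suffices to treat the case $\Na = \Gc$. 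By the resolvent identity again we may also assume $z = i$.

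Next I would use the fact that $X_1 = P\StM{A_1} \su B^\infty$ is complemented, so $X = P B^\infty$ and any $B$--compact $K \in \cK(X)$ can be written as $K = P \wit K$ for a $B$--compact operator $\wit K \in \cK(B^\infty, X)$ (for instance $\wit K = K P$, composed with the inclusion $X \hookrightarrow B^\infty$). Recall that $1 \ot_{\Gc} D_2$ is the compression $Q \diag(D_2) Q$ with $Q = P \ot 1$, acting on $X \hot_B Y \cong Q Y^\infty$. The plan is to relate the resolvent $(Q \diag(D_2) Q - i)^{-1}$ on $Q Y^\infty$ to the resolvent $(\diag(D_2) - i)^{-1}$ on all of $Y^\infty$. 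The clean way to do this is via the bounded operator $[Q, \diag(D_2)]$, which is adjointable by Lemma \ref{commbounded}: writing $\diag(D_2) = Q \diag(D_2) Q + (1-Q)\diag(D_2)(1-Q) + [Q,\diag(D_2)](1-Q) - (1-Q)[Q,\diag(D_2)]$ as in the proof of Theorem \ref{t:extright}, one sees that the resolvent of $Q \diag(D_2) Q$ (extended by $1-Q$ to $Y^\infty$) differs from $(\diag(D_2) - i)^{-1}$ by a bounded operator times $[Q,\diag(D_2)]$ times resolvents — again via a second-resolvent-identity manipulation. Hence, modulo bounded operators that get killed by composing with $K \ot 1$ in the appropriate slot, $(K \ot 1)(1 \ot_{\Gc} D_2 - i)^{-1}$ is, up to a compact correction, of the form $(P \wit K \ot 1)$ applied after $(\diag(D_2) - i)^{-1}$, i.e.\ essentially $(\wit K \ot 1)(\diag(D_2) - i)^{-1}$ composed with $P \ot 1$.

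At this point Lemma \ref{l:diagcomp} applies directly: $\wit K \in \cK(B^\infty, X)$, $B$ is $\si$--unital (it carries the approximate unit since $B_1$ is $\si$--unital and densely included — one should note this explicitly), and the action $B \to \B(Y)$ is essential by the standing hypothesis that $(Y,D_2)$ is essential; therefore $(\wit K \ot 1)(\diag(D_2) - i)^{-1}$ is $C$--compact. Composing with the bounded operator $P \ot 1$ on the left keeps it in the ideal $\cK(X \hot_B Y)$, and adding back the compact corrections collected above finishes the argument.

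\textbf{Main obstacle.} The delicate point is keeping track of \emph{where} each bounded (noncompact) factor sits relative to $K \ot 1$: a bounded operator composed with a compact operator is compact, so one must arrange every manipulation so that the genuinely compact ingredient — either $K \ot 1$ itself or the $C$--compact resolvent produced by Lemma \ref{l:diagcomp} — absorbs the bounded leftovers $c(\Na - \Gc)$ and $[Q,\diag(D_2)]$. Concretely, the commutator terms must be moved to the \emph{outside} of the compact factor via the second resolvent identity rather than left sandwiched in a position where no compactness is available. This bookkeeping, together with the (routine) verification that $B$ inherits $\si$--unitality and that $\wit K = KP$ is genuinely $B$--compact as a map $B^\infty \to X$, is the only real content beyond invoking Lemma \ref{l:diagcomp}, Lemma \ref{commbounded}, Proposition \ref{p:compself}, and Theorem \ref{t:extright}.
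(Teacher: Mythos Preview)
Your proposal is correct and follows essentially the same route as the paper: reduce to the Gra{\ss}mann connection via Proposition~\ref{p:compself} and the resolvent identity, then pass from $(1 \ot_{\Gc} D_2 - i)^{-1} = (Q\diag(D_2)Q - i)^{-1}$ to $(\diag(D_2)-i)^{-1}$ using the decomposition from the proof of Theorem~\ref{t:extright} together with Lemma~\ref{commbounded} and another resolvent identity, and finally invoke Lemma~\ref{l:diagcomp} with the $B$--compact operator $KP : B^\infty \to X$. One small wording issue: $K \ot 1$ is not itself $C$--compact, so the ``genuinely compact ingredient'' is always the output of Lemma~\ref{l:diagcomp}; the resolvent-identity corrections are compact because they are of the form (already established compact)$\times$(bounded), which you do state correctly in the end.
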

\begin{proof}
We start by recalling that by Proposition \ref{p:compself}
the difference of unbounded operators $1 \ot_\Na D_2 - 1 \ot_{\Gc} D_2$ 
extends to a bounded selfadjoint operator. 
By the resolvent identity it is therefore
sufficient to prove the claim for the Gra{\ss}mann $D_2$--connection 
$\Gc : X_1 \to X \hot_B \B(Y)$ and $z=i$.

By assumption, \cf Convention \ref{StaConA} resp. 
Proposition \ref{p:autobound},
we can assume that $X_1 = PB_1^\infty$ and $X =
PB^\infty$ where $P : B_1^\infty \to B_1^\infty$ is a completely bounded
selfadjoint idempotent. 
Since $(Y,D_2)$ is assumed to be essential we have
$X\hot_B Y = PB^\infty\hot_B Y=(P\ot 1)(B^\infty\hot_B Y)$ and
$Y^\infty\cong B^\infty\hot_B Y$. Put $Q = P \ot 1 : Y^\infty\cong
B^\infty\hot_B Y \to X \hot_B Y$. Then
\begin{align}
 (K \ot 1)&(1 \ot_{\Gc} D_2 - i)^{-1} Q \nonumber\\
            = &(K \ot 1)(Q\diag(D_2)Q - iQ)^{-1} Q \nonumber\\
           = &(K P \ot 1)(Q\diag(D_2)Q + (1- Q)\diag(D_2)(1-Q)-i)^{-1}
               \label{eq:201110031}\\
          & :Y^\infty \to Q Y^\infty.\nonumber \end{align}
The difference $Q\diag(D_2) Q + (1 - Q)\diag(D_2)(1-Q) - \diag(D_2)$ is bounded 
and selfadjoint by Lemma \ref{commbounded}. Another application of the
resolvent equation then shows that the $C$--compactness of the rhs of 
\Eqref{eq:201110031} is equvialent to that of
\[
          (K P \ot 1)(\diag(D_2)  - i)^{-1} : B^\infty\hot_B Y \to X\hot_B Y.
\]
Since $KP : B^\infty \to X$ is $B$--compact the result follows from Lemma \ref{l:diagcomp}
\end{proof}

The next result will allow us to conclude compactness results for the
resolvent $(D_1 \ti_{\Na} D_2 - i)^{-1}$ by looking at operators of the form
$(K \ot 1)(1 \ot_{\Na} D_2 -i)^{-1}$.

%\begin{lemma}\label{inclucont}
%Suppose that $(X_1,\Na^0)$ is a correspondence and let $\Na_{D_2} : X_1 \to X
%\hot_B \B(Y)$ be a hermitian $D_2$--connection. Then the inclusions $\io_1
%: \sD(D_1 \ti_\Na D_2) \to \sD(1 \ot_\Na D_2)^2$ and $\io_2 : \sD(D_1 \ti_\Na
%D_2) \to \sD(D_1 \ot 1)^2$ are continuous. Here the various spaces are
%equipped with the appropriate graph--norms.
%\end{lemma}
%\begin{proof}
%Since the difference $1 \ot_{\Na} D_2 - 1 \ot_{\Na^0} D_2$ is bounded
%selfadjoint by Proposition \ref{p:compself} we can replace the arbitrary
%connection $\Na$ with the fixed connection $\Na^0$. 
%By \cite[Lemma 7.6]{KaaLes:LGP} we then get a constant $C > 0$ such that
%\[
%\begin{split}
%& \binn{(D_1 \ot 1 \pm i\, 1 \ot_{\Na^0} D_2)z, (D_1 \ot 1 \pm i\, 1 \ot_{\Na^0}
  %D_2)z} \\
%& \q \geq \frac{1}{2}\binn{(D_1 \ot 1)z,(D_1 \ot 1)z}
%+ \binn{(1 \ot_{\Na^0} D_2)z, (1 \ot_{\Na^0} D_2)z} - C \inn{z,z}
%\end{split}
%\]
%for all $z \in \sD(D_1 \ot 1) \cap \sD(1 \ot_{\Na^0} D_2)$. But this proves
%the statement of the lemma.
%\end{proof}

%Before stating and proving the main result of this section we recall
%from \cite[Prop. 2.4]{KaaLes:LGP} that for a selfadjoint regular operator
%$T$ in a Hilbert $C^*$--module $E$ the domain, $\dom(T)$, equipped with
%its graph inner product is a Hilbert $C^*$--module and 
We are now ready to prove the main result of this section.

\begin{theorem}\label{t:prodmod}
Suppose that $(X_1,\Na^0)$ is a correspondence between the unbounded odd
Kasparov modules $(X,D_1)$ and $(Y,D_2)$. Let $\Na_{D_2} : X_1 \to X \hot_B \B(Y)$ 
be any completely bounded hermitian $D_2$--connection. Then the pair
$(D_1 \ti_{\Na} D_2, X \hot_B Y)$ is an even unbounded Kasparov $A$--$C$
module which only depends on $\Na_{D_2}$ up to selfadjoint bounded
perturbations.
\end{theorem}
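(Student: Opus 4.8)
The plan is to check, one at a time, the defining properties of an even unbounded Kasparov $A$--$C$ module (Definition~\ref{def:UKM}) for the pair $\bigl(D_1 \ti_{\Na} D_2,(X\hot_B Y)^2\bigr)$ with grading $\gam=\diag(1,-1)$. Selfadjointness and regularity of $D_1\ti_{\Na}D_2$ are exactly Theorem~\ref{t:regself}, whose hypotheses are precisely what a correspondence provides: condition~(4) of Definition~\ref{def:Correspondence} is the commutator hypothesis of Theorem~\ref{t:KaaLes1} for $S=D_1\ot 1$ and $T=1\ot_{\Na^0}D_2$, and $D_1$ is assumed selfadjoint and regular. Since $D_1\ti_{\Na}D_2$ is anti-diagonal it anticommutes with $\gam$, whereas $a\in A$ acts diagonally and hence commutes with $\gam$, which gives the grading. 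For the bounded commutator condition, on the dense $*$--subalgebra $\sA\su A$ attached to $(X,D_1)$ the commutator $[D_1\ti_{\Na}D_2,a]$ is anti-diagonal with entries $\pm\bigl([D_1,a]\ot 1\mp i[1\ot_{\Na}D_2,a]\bigr)$; the first summand is bounded by the Kasparov axiom for $(X,D_1)$, while $[1\ot_{\Na}D_2,a]$ is bounded because $1\ot_{\Na}D_2-1\ot_{\Na^0}D_2$ is bounded adjointable by Proposition~\ref{p:compself} and $[1\ot_{\Na^0}D_2,a]$ is bounded by condition~(3) of Definition~\ref{def:Correspondence}. Finally, that the construction depends on $\Na$ only up to bounded selfadjoint perturbations is again Proposition~\ref{p:compself}.

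So the only real issue is the compactness of the resolvent; write $D:=D_1\ti_{\Na}D_2$ and fix $a\in A$. We already know $D$ is selfadjoint and regular, and by Theorem~\ref{t:KaaLes1} applied to $\Na^0$ — the relevant domains being unaffected by the bounded adjointable perturbation passing from $\Na^0$ to $\Na$ — the inclusions $\sD(D)\hookrightarrow\sD(D_1\ot 1)$ and $\sD(D)\hookrightarrow\sD(1\ot_{\Na}D_2)$ are continuous, so $(D_1\ot 1-i)(D-i)^{-1}$ and $(1\ot_{\Na}D_2-i)(D-i)^{-1}$ extend to bounded adjointable operators. Since $(D-i)^{-1}$ takes values in $\sD(D_1\ot 1)$ and $(D_1\ot 1-i)^{-1}=(D_1-i)^{-1}\ot 1$ is bounded, a routine verification on a core yields
\[
\pi(a)(D-i)^{-1}=\bigl(a(D_1-i)^{-1}\ot 1\bigr)\,(D_1\ot 1-i)(D-i)^{-1},
\]
where $K:=a(D_1-i)^{-1}\in\cK(X)$ is $B$-compact by the Kasparov axiom for $(X,D_1)$. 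The plan is now to approximate $K$ in norm by finite rank operators $\sum_j\te^X_{\xi_j,\eta_j}$ with all $\eta_j\in\sD(D_1)$; since $(D_1\ot 1-i)(D-i)^{-1}$ is bounded it then suffices to prove that each $\bigl(\te^X_{\xi,\eta}\ot 1\bigr)(D_1\ot 1-i)(D-i)^{-1}$ is $C$-compact. Moving $D_1$ through the rank one operator (a short computation using $\eta\in\sD(D_1)$ and selfadjointness of $D_1$) gives $\bigl(\te^X_{\xi,\eta}\ot 1\bigr)(D_1\ot 1-i)=\te^X_{\xi,(D_1+i)\eta}\ot 1$, so this operator equals $\bigl(\te^X_{\xi,(D_1+i)\eta}\ot 1\bigr)(D-i)^{-1}$. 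Using now that $(D-i)^{-1}$ takes values in $\sD(1\ot_{\Na}D_2)$ and that $(1\ot_{\Na}D_2-i)^{-1}$ is bounded,
\[
\bigl(\te^X_{\xi,\zeta}\ot 1\bigr)(D-i)^{-1}=\Bigl[\bigl(\te^X_{\xi,\zeta}\ot 1\bigr)(1\ot_{\Na}D_2-i)^{-1}\Bigr]\,(1\ot_{\Na}D_2-i)(D-i)^{-1},
\]
the bracketed factor being $C$-compact by Proposition~\ref{komright} (conditions~(1) and~(2) of Definition~\ref{def:Correspondence} are in force) and the remaining factor bounded. Hence $\pi(a)(D-i)^{-1}$ is a norm limit of $C$-compact operators, so it is $C$-compact, and all three conditions of Definition~\ref{def:UKM} are verified.

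The step I expect to be the genuine difficulty is this last one. The subtlety is that $a(D_1-i)^{-1}\ot 1$, although obtained from a $B$-compact operator on $X$, need \emph{not} be $C$-compact on $X\hot_B Y$; the compactness has to be harvested from the $D_2$--direction, that is, ultimately from the Kasparov condition for $(Y,D_2)$, which is exactly what Proposition~\ref{komright} packages. The accompanying bookkeeping with unbounded operators — factoring $(D-i)^{-1}$ through the resolvents of $D_1\ot 1$ and of $1\ot_{\Na}D_2$, and commuting $D_1$ past a rank one operator — is legitimate only because of the regularity of the product operator and the continuous domain inclusions $\sD(D)\hookrightarrow\sD(D_1\ot 1)$, $\sD(D)\hookrightarrow\sD(1\ot_{\Na}D_2)$ supplied by Theorems~\ref{t:KaaLes1} and~\ref{t:regself}.
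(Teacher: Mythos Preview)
Your proof is correct and follows essentially the same route as the paper: both hinge on the continuous domain inclusions $\sD(D)\hookrightarrow\sD(D_1\ot 1)$ and $\sD(D)\hookrightarrow\sD(1\ot_\Na D_2)$ supplied by Theorem~\ref{t:KaaLes1}, invoke Proposition~\ref{komright} for $C$-compactness in the $D_2$-direction, and use the $B$-compactness of $a(D_1-i)^{-1}$ to produce a norm-convergent sequence of $C$-compact approximants to $\pi(a)(D-i)^{-1}$. The only cosmetic difference is that the paper approximates via a countable approximate unit $\{\te_m\}$ for $\cK(X)$ (writing $\io=\io_2\io_1=\io_4\io_3$ through the two intermediate domains) rather than by finite-rank operators with second leg in $\sD(D_1)$, which spares it your commuting-$D_1$-past-a-rank-one-operator step.
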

\begin{proof}
Let us fix some countable approximate unit $\{\te_m\}$ for the compact
operators on $X$.

As observed at the beginning of this section, Theorem \ref{t:regself}
applies to $D_1\ti_\Na D_2$. That means that Theorem \ref{t:KaaLes1}
applies to $S=D_1\ot 1$ and $T=1\ot_{\Na^0} D_2$. Since the difference 
$1 \ot_{\Na} D_2 - 1 \ot_{\Na^0} D_2$ is bounded
selfadjoint by Proposition \ref{p:compself} 
it follows in particular that we have the following commutative diagram of 
continuous inclusion maps of Hilbert $C^*$--modules:
\begin{equation}\label{eq:DomInc}
 \xymatrix{     &   \sD(1\ot_\Na D_2)^2 \ar[dr]^{\io_2}& \\
 \sD(D_1\ti_\Na D_2)^2 \ar[ur]^{\io_1}\ar[rr]^{\io}\ar[dr]^{\io_3} &   &  (X\hot_B Y)^2\\
                &   \sD(D_1 \ot 1)^2 \ar[ur]^{\io_4}& 
                }
\end{equation}
We need to prove that for $a\in A$ the bounded operator $\pi(a) \ci \io : \sD(D_1 \ti_\Na
D_2) \to (X \hot_B Y)^2$ is $C$--compact. 
Here, $\pi(a) \in \B(X \hot_B Y)^2$ is given by
the action of $A$ on the first component in the interior tensor product.

Let $m \in \nn$ and consider the composition 
$(\te_m \ot 1) \ci \pi(a) \ci \io = (\te_m \ot 1) \ci \pi(a) \ci \io_2 \ci \io_1
: \sD(D_1 \ti_\Na D_2) \to (X \hot_B Y)^2$, \cf \Eqref{eq:DomInc}.

The operator $(\te_m \ot 1) \ci \pi(a) \ci \io_2 : \sD(1
\ot_\Na D_2)^2 \to (X \hot_B Y)^2$ is $C$--compact by Proposition
\ref{komright}. Hence $(\te_m \ot 1) \ci \pi(a) \ci \io \in 
\cK\big( \sD(D_1 \ti_\Na D_2), (X \hot_B Y)^2 \big)$ is $C$--compact for all $m \in
\nn$.

On the other hand we have the identity
\[
(\te_m \ot 1) \ci \pi(a) \ci \io_4 \ci \io_3 = (\te_m \ot 1) \ci \pi(a) \ci \io
: \sD(D_1 \ti_\Na D_2) \to (X \hot_B Y)^2.
\]
But the sequence of operators $\{ (\te_m \ot 1) \ci
\pi(a) \ci \io_4\}$ in $\B\big(\sD(D_1 \ot 1)^2,(X \hot_B Y)^2\big)$
converges in operator norm to the bounded operator $\pi(a) \ci \io_4 \in 
\B\big(\sD(D_1 \ot 1)^2, (X \hot_B Y)^2\big)$. Indeed, this follows by noting
that $\pi(a) \ci (D_1 \ot 1 - i )^{-1}$ is of the form $K \ot 1$ where $K \in
\cK(X)$ is compact.

We have thus proved that $\pi(a) \ci \io \in \B\big( \sD(D_1 \ti_\Na D_2),
(X \hot_B Y)^2\big)$ is the limit in operator norm of a sequence of compact
operators. It is therefore compact and the theorem is proved.
\end{proof}
%%%%%%%%%%%%%%%%%%%%%%%%%%%%%%%%%%%%%%%%%8<%%%%%%%%%%%%%%%%%%%%%%%%%%%%%%%%}}}
%% Sec 7 %% 
\section{Unbounded representatives for the interior Kasparov product}\label{s:URI} % {{{
Let $A$, $B$ and $C$ be $C^*$--algebras where $A$ is separable and $B$ is
$\si$--unital. We then have the interior Kasparov product $\hot_B : KK^1(A,B)
\ti KK^1(B,C) \to KK^0(A,C)$ which is a bilinear and associative pairing of
abelian groups \cite{Kas:OKF}, \cite[Sec.~18]{Bla:KTO2Ed}. The purpose of this section is to show that
the unbounded interior product which we constructed in the last section is an
unbounded version of the interior Kasparov product.

Let $(X,D_1)$ and $(Y,D_2)$ be odd unbounded Kasparov modules for $(A,B)$ and
$(B,C)$ respectively. By the work of Baaj and Julg \cite{BaaJul:TBK}
the bounded transform $F : (X,D) \mapsto \big(X,D(1+ D^2)^{-1/2}\big)$ 
provides classes $F(X,D_1)\in KK^1(A,B)$ and $F(Y,D_2)\in KK^1(B,C)$.

\begin{dfn} We say that an even unbounded Kasparov
$A$--$C$ bimodule $(Z,D)$ \emph{represents} the interior Kasparov product of
$(X,D_1)$ and $(Y,D_2)$ if
\[ 
F(X,D_1) \hot_B F(Y,D_2) = F(Z,D)
\]
in the even $KK$--group $KK^0(A,C)$. 
\end{dfn}

We shall see that the existence of a correspondence $(X_1,\Na^0)$ from
$(X,D_1)$ to $(Y,D_2)$ implies that the even unbounded Kasparov module 
$((X\hot_B Y)^2,D_1\ti_{\Na} D_2)$ represents the interior Kasparov product for
any hermitian $D_2$--connection $\Na$. Our main tool will be a general result
which is an adaption of a theorem of D. Kucerovsky to the case of the interior
Kasparov product between two odd $KK$--theory groups. The result can thus be
proved by an application of D. Kucerovsky's theorem together with some
understanding of formal Bott--periodicity in $KK$--theory, see for example
\cite[Cor.~17.8.9]{Bla:KTO2Ed}.

For each $x \in X$ we will use the notation $T_x : Y^2 \to (X \hot_B Y)^2$ for
the multiplication operator $T_x : (y_1,y_2) \mapsto ((x \ot y_1),(x \ot
y_2))$; $T_x$ is bounded adjointable. Furthermore, we let
$\si_1,\si_2 \in M_2(\cc)$ denote the matrices
\[
\si_1 := \matr{cc}{0 & -i \\ i & 0} \quad \text{and} \quad \si_2 := \matr{cc}{0 & 1 \\ 1 & 0}.
\]

\begin{theorem}[{\cite[Theorem 13]{Kuc:KKP}}]\label{t:kuceodd}
Let $(\pi_1,X,D_1)$ and $(\pi_2,Y,D_2)$ be two odd unbounded Kasparov modules
for $(A,B)$ and $(B,C)$, respectively. Let $(\pi_1 \ot 1, (X \hot_B Y) \op
(X \hot_B Y),D)$ be an even unbounded Kasparov $A$--$C$ module, where $(X \hot_B
Y)^2$ is $\zz_2$--graded by the grading operator $\gam =
\diag(1,-1)$. Suppose that the following conditions are satified:
\begin{enumerate}
\item The commutator
\[
\left[ \matr{cc}{D & 0 \\ 0 & D_2 \cd \si_1} , \matr{cc}{0 & T_x \\ T_x^* & 0 }
\right] : \sD(D) \op \sD(D_2)^2 \to (X \hot_B Y)^2 \op Y^2
\]
is well--defined and extends to a bounded operator on $(X \hot_B Y)^2 \op Y^2$ for
all $x$ in a dense subset of $A \cd X$.
\item The domain of $D$ is contained in the domain of $(D_1 \ot 1)\cd \si_2$.
\item There exists a constant $C > 0$ such that 
\[
\binn{Dz,(D_1 \ot 1)\cd \si_2 (z)} + \binn{(D_1 \ot 1)\cd \si_2 (z),Dz} \geq
-C\inn{z,z}
\]
for all $z \in \sD(D)$.
\end{enumerate}
Then the even unbounded Kasparov module $(D,(X \hot_B Y)^2)$
represents the interior Kasparov product of $(D_1,X)$ and $(D_2,Y)$.
\end{theorem}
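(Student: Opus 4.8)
The plan is to reduce the odd--odd situation to an even--even one by a formal Clifford suspension and then to invoke Kucerovsky's criterion in the graded case \cite{Kuc:KKP}, exactly as sketched in the introduction to this section. Let $\cc_1$ denote the complex Clifford algebra on one odd self--adjoint generator $e$ with $e^2 = 1$; it is $\zz_2$--graded, and $\cc_1 \ot \cc_1$ is Morita equivalent to $\cc$ (complex Clifford periodicity), so that one has periodicity isomorphisms $KK^1(A,B) \cong KK^0(A, B \ot \cc_1)$ and $KK^1(B,C) \cong KK^0(B \ot \cc_1, C)$, all tensor products graded. First I would realise these on the level of unbounded cycles: from the odd cycle $(\pi_1, X, D_1)$ form the even $(A, B \ot \cc_1)$--cycle with underlying module $\hat X := X \ot \cc_1$ (grading on the $\cc_1$--factor, obvious module structure) and odd operator $\hat D_1 := D_1 \ot \mu_e$, where $\mu_e$ denotes left Clifford multiplication by $e$; since $\mu_e^2 = 1$ its bounded transform equals $F(X,D_1) \ot \mu_e$ and thus represents the image of $F(X,D_1)$ under periodicity. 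Symmetrically, build the even $(B \ot \cc_1, C)$--cycle $\hat Y := \cc_1 \ot Y$ with $\hat D_2 := \mu_e \ot D_2$, representing the image of $F(Y,D_2)$.

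Next I would perform the relevant identifications. There is a canonical isomorphism $\hat X \hot_{B \ot \cc_1} \hat Y \cong (X \hot_B Y) \ot \cc_1 \cong (X \hot_B Y)^2$ of Hilbert $C^*$--modules carrying the $\zz_2$--grading induced by $\cc_1$ onto $\gam = \diag(1,-1)$. Under this isomorphism: for $x$ in a dense subset of $A \cd X$ the creation operator $T_{x \ot 1} : \hat Y \cong Y^2 \to (X \hot_B Y)^2$ is exactly the operator $T_x$ of the statement; the operator $\hat D_1 \hot 1$ becomes $(D_1 \ot 1) \cd \si_2$, which has the same domain as $D_1 \ot 1$ since $\si_2$ is a bounded involution; and, in a suitable homogeneous basis for $\cc_1$, the operator $\hat D_2$ on $\hat Y \cong Y^2$ together with the Connes sign convention for the graded tensor product make the $D_2$--leg of the product operator appear as $D_2 \cd \si_1$. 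Hence the three hypotheses \textup{(1)}, \textup{(2)}, \textup{(3)} of the present theorem become, respectively, the connection condition (boundedness of the graded commutator of $\diag(D, D_2 \cd \si_1)$ with $\pmat{0 & T_x \\ T_x^* & 0}$), the domain condition $\sD(D) \su \sD(\hat D_1 \hot 1)$, and the semiboundedness condition of Kucerovsky's graded criterion, applied to the triple $\big((X \hot_B Y)^2, D, \gam\big)$ together with $(\hat X, \hat D_1)$ and $(\hat Y, \hat D_2)$. That criterion then gives that $\big((X \hot_B Y)^2, D\big)$ represents $[\hat X, \hat D_1] \hot_{B \ot \cc_1} [\hat Y, \hat D_2]$ in $KK^0(A,C)$.

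Finally I would close the argument with the compatibility of the Kasparov product and the periodicity isomorphisms, \cf \cite[Cor.~17.8.9]{Bla:KTO2Ed}: under the identifications above the odd product $\hot_B$ corresponds to the even product $\hot_{B \ot \cc_1}$, whence $[\hat X, \hat D_1] \hot_{B \ot \cc_1} [\hat Y, \hat D_2] = F(X,D_1) \hot_B F(Y,D_2)$, which is the assertion. I expect the main obstacle to be bookkeeping rather than analysis: fixing the graded tensor product and sign conventions so that $\hat D_1$ and $\hat D_2$ genuinely define even unbounded Kasparov modules representing the correct classes, so that the grading on $\hat X \hot_{B \ot \cc_1} \hat Y$ comes out as $\gam$ rather than $-\gam$, and so that the two Clifford legs produce the anticommuting pair $\si_2$ (with $D_1$) and $\si_1$ (with $D_2$) in exactly the form appearing in the statement. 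The genuine operator--theoretic substance is entirely contained in the graded version of Kucerovsky's criterion, which I would use as a black box.
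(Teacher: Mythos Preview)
Your approach is correct and matches the paper's own treatment: the paper does not give a detailed proof of this theorem but merely remarks that it ``can thus be proved by an application of D.~Kucerovsky's theorem together with some understanding of formal Bott--periodicity in $KK$--theory,'' citing \cite[Cor.~17.8.9]{Bla:KTO2Ed}. Your Clifford suspension argument, reducing the odd--odd product to the even--even case and then invoking Kucerovsky's graded criterion as a black box, is precisely this sketch carried out in detail.
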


The second condition in the above theorem can be slightly weakened \cite[Lemma 10]{Kuc:KKP}. 
However, for our purposes the stronger requirement on the domains is sufficient.

Let us fix two odd unbounded Kasparov modules $(X,D_1)$ and $(Y,D_2)$ for
$(A,B)$ and $(B,C)$, respectively. Furthermore, we assume that we have 
a correspondence $(X_1,\Na^0)$ from $(X,D_1)$ to $(Y,D_2)$. 
In particular, $(Y,D_2)$ is essential in the sense of Definition \ref{def:EssKas} 
and the operator $*$--algebra $B_1$ is $\si$--unital.

In the next lemmas we will show that $(X,D_1)$, $(Y,D_2)$ and $((X \hot_B
Y)^2,D_1 \ti_{\Na^0} D_2)$ satisfy the conditions of Theorem \ref{t:kuceodd}. We
remark that $((X \hot_B Y)^2,D)$ is an even unbounded Kasparov $A$--$C$ module
by Theorem \ref{t:prodmod}.

We let $\sF := \pi_1(\sA) \cd (D_1 -i)^{-1}(X_1)$ and remark that $\sF$ is a
dense subset of $A \cd X$.

\begin{lemma}\label{domaininclus}
For each $x \in \sF$ we have that $T_x(\sD(D_2)) \su \sD(D_1 \ot 1) \cap \sD(1
\ot_{\Na^0} D_2)$ and $T_x^*(\sD(D_1 \ot 1) \cap \sD(1 \ot_{\Na^0} D_2)) \su
\sD(D_2)$.
\end{lemma}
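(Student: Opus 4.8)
The plan is to exploit the explicit form of $\sF$: every $x\in\sF$ is of the shape $x=\pi_1(a)(D_1-i)^{-1}(\xi)$ for some $a\in\sA$ and $\xi\in X_1$. The whole argument then reduces, via the two factorizations $T_x=(\pi_1(a)\ot 1)\ci(D_1\ot 1-i)^{-1}\ci T_\xi$ and $T_x^*=T_\xi^*\ci(D_1\ot 1+i)^{-1}\ci(\pi_1(a^*)\ot 1)$, to tracking how the three building blocks — the creation operator $T_\xi$ (resp.\ its adjoint), the resolvent of $D_1\ot 1$, and the multiplication operator $\pi_1(a)\ot 1$ — act on the two relevant domains $\sD(D_1\ot 1)$ and $\sD(1\ot_{\Na^0}D_2)$. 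Since $\sA$ is a $*$--algebra, $a^*\in\sA$, so the claims for $T_x$ and for $T_x^*$ are symmetric; unravelling the definition of $T_x$, the first inclusion amounts to $x\ot y\in\sD(D_1\ot 1)\cap\sD(1\ot_{\Na^0}D_2)$ for all $y\in\sD(D_2)$, and the second to $T_x^*w\in\sD(D_2)$ for all $w\in\sD(D_1\ot 1)\cap\sD(1\ot_{\Na^0}D_2)$ (in each coordinate of the pair).

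For the first inclusion, fix $y\in\sD(D_2)$. As $\xi\in X_1$, Theorem \ref{t:extright} gives $\xi\ot y\in\sD(1\ot_{\Na^0}D_2)$. Condition (4) of Definition \ref{def:Correspondence} — which encodes conditions (1) and (2) of Theorem \ref{t:KaaLes1} for $S=D_1\ot 1$ and $T=1\ot_{\Na^0}D_2$ — says in particular that $(D_1\ot 1-i)^{-1}$ maps $\sD(1\ot_{\Na^0}D_2)$ into $\sD(D_1\ot 1)\cap\sD(1\ot_{\Na^0}D_2)$, so $(D_1\ot 1-i)^{-1}(\xi\ot y)=\bigl((D_1-i)^{-1}\xi\bigr)\ot y$ lies in this intersection. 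Finally $\pi_1(a)\ot 1$ preserves $\sD(D_1\ot 1)$ because $[D_1,a]$ is bounded (Definition \ref{def:UKM}(1)) and preserves $\sD(1\ot_{\Na^0}D_2)$ because $[1\ot_{\Na^0}D_2,a]$ is bounded (Definition \ref{def:Correspondence}(3)); applying it yields $x\ot y\in\sD(D_1\ot 1)\cap\sD(1\ot_{\Na^0}D_2)$.

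The extra ingredient for the second inclusion is the sub-claim that for every $\xi\in X_1$ the bounded adjointable operator $T_\xi^*:X\hot_B Y\to Y$ maps $\sD(1\ot_{\Na^0}D_2)$ into $\sD(D_2)$. To prove it, fix $u\in\sD(1\ot_{\Na^0}D_2)$ and compute, for arbitrary $v\in\sD(D_2)$, $\binn{D_2v,T_\xi^*u}=\binn{\xi\ot D_2v,u}$. By the explicit formula of Theorem \ref{t:extright}, $\xi\ot D_2v=(1\ot_{\Na^0}D_2)(\xi\ot v)-c(\Na^0)(\xi\ot v)$, where the map $v\mapsto c(\Na^0)(\xi\ot v)=c\bigl(\Na^0(\xi)\ot v\bigr)$ is a bounded adjointable operator $Y\to X\hot_B Y$ since $\Na^0(\xi)\in X\hot_B\B(Y)$ and the contraction $c$ of \eqref{eq:1108052} is bounded adjointable. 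Moving $1\ot_{\Na^0}D_2$ onto $u$ by selfadjointness, one gets $\binn{D_2v,T_\xi^*u}=\binn{v,w}$ with $w:=T_\xi^*(1\ot_{\Na^0}D_2)u-\bigl(c(\Na^0)(\xi\ot\cdot)\bigr)^*u$ independent of $v$; since $D_2$ is selfadjoint this forces $T_\xi^*u\in\sD(D_2^*)=\sD(D_2)$. Granting the sub-claim, the second inclusion follows: for $w\in\sD(D_1\ot 1)\cap\sD(1\ot_{\Na^0}D_2)$ the same domain arguments as above, now applied to $a^*\in\sA$, give $(\pi_1(a^*)\ot 1)w\in\sD(D_1\ot 1)\cap\sD(1\ot_{\Na^0}D_2)$, then $(D_1\ot 1+i)^{-1}(\pi_1(a^*)\ot 1)w\in\sD(1\ot_{\Na^0}D_2)$ by Definition \ref{def:Correspondence}(4), and finally $T_x^*w=T_\xi^*\bigl((D_1\ot 1+i)^{-1}(\pi_1(a^*)\ot 1)w\bigr)\in\sD(D_2)$.

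I expect the sub-claim that $T_\xi^*$ respects the domain of $D_2$ to be the only step requiring genuine care, as it is the one place where the connection $\Na^0$, the selfadjointness of $1\ot_{\Na^0}D_2$, and the adjointability of the partial contraction $c(\Na^0)(\xi\ot\cdot)$ genuinely interact. The two factorizations of $T_x$ and the domain-preservation properties of the resolvent of $D_1\ot 1$ and of $\pi_1(a)\ot 1$ are then essentially bookkeeping, provided one keeps careful track of which of the two domains $\sD(D_1\ot 1)$ and $\sD(1\ot_{\Na^0}D_2)$ each factor does — and does not — preserve.
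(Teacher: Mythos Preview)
Your proof is correct and follows the same overall architecture as the paper's: factor $T_x$ and $T_x^*$ through $T_\xi$, the resolvent $(D_1\ot 1\mp i)^{-1}$, and $\pi_1(a)\ot 1$ (resp.\ $\pi_1(a^*)\ot 1$), and track domain preservation for each factor using Theorem~\ref{t:extright} and conditions (3), (4) of Definition~\ref{def:Correspondence}.

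The one genuine difference is in the sub-claim $T_\xi^*\bigl(\sD(1\ot_{\Na^0}D_2)\bigr)\su\sD(D_2)$. The paper argues structurally: since $\sD(1\ot_{\Na^0}D_2)\su\sD(\diag(D_2))$ by construction, and since $\xi=\{b_i\}\in X_1\su B_1^\infty$ gives $\xi^*\in\Mbar{B_1}$, Lemma~\ref{commcompact} immediately yields that $T_\xi^*$ (which is the first coordinate of multiplication by the row $\xi^*$) preserves the relevant domain. You instead give a duality argument via $\binn{D_2 v,T_\xi^*u}=\binn{\xi\ot D_2 v,u}$, the explicit formula of Theorem~\ref{t:extright}, and selfadjointness of $1\ot_{\Na^0}D_2$ and $D_2$. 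Your route has the mild advantage of never unpacking the identification $X_1=PB_1^\infty$, working purely at the level of the connection; the paper's route is shorter and reuses an already-proved lemma, avoiding the (routine but not entirely trivial) verification that $v\mapsto c\bigl(\Na^0(\xi)\ot v\bigr)$ is bounded adjointable.
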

\begin{proof}
Let $x := a \cd (D_1 - i)^{-1}(\xi) \in \sF$ with $\xi \in X_1$ and $a \in
\sA$.

Let $y \in \sD(D_2)$. We need to show that that $x \ot y \in \sD(D_1 \ot 1)
\cap \sD(1 \ot_{\Na^0} D_2)$. Since $\sF \su \sD(D_1)$ we get that $T_x(y) = x
\ot y \in \sD(D_1 \ot 1)$. Next, by definition of $x \in \sF$, we
find $T_x(y) = a \cd ((D_1 - i)^{-1} \ot 1)(\xi \ot y)=
a \cd ((D_1\ot 1 - i)^{-1} )(\xi \ot y)$.

By Theorem \ref{t:extright} we have $\xi \ot y \in \sD(1 \ot_{\Na^0} D_2)$
because $\xi \in X_1$ and $y \in \sD(D_2)$. Since $(X_1,\Na^0)$ is a correspondence 
we infer that $((D_1 - i)^{-1} \ot 1)(\sD(1 \ot_{\Na^0} D_2)) \su \sD(1 \ot_{\Na^0} D_2)$. 
Finally, from Def. \ref{def:Correspondence} (3) we conclude 
$a \cd ((D_1\ot 1 - i)^{-1} )(\xi \ot y) \in \sD(1 \ot_{\Na^0} D_2)$.

Now, let $z \in \sD(1 \ot_{\Na^0} D_2)$. We will show that then already
$T_x^*(z) \in \sD(D_2)$. Since $x = a \cd (D_1 - i)^{-1}(\xi)$ we have
that $T_x^*(z) = T_\xi^* (D_1 \ot 1 + i)^{-1} a^*(z)=:T_\xi^*\tilde z$. 
However, again, since $(X_1,\Na^0)$ is a correspondence we get that 
$\tilde z \in \sD(D_1 \ot 1) \cap \sD(1 \ot_{\Na^0} D_2)$. 
Now $\xi = \{b_i\} \in X_1 = PB_1^\infty$ and thus $T_\xi^*(\tilde z) = \xi^* 
\cd \tilde z \in Y$ where $\xi^* \in B_1 \ot \cK$ denotes the infinite row obtained as
the adjoint of the infinite column $\xi = \{b_i\}$. The result of the lemma now
follows from Lemma \ref{commcompact} since $z \in \sD(1 \ot_{\Na^0} D_2) \su
\sD(\diag(D_2))$.
\end{proof}

The next lemma implies that the first condition in Theorem \ref{t:kuceodd} on
the boundedness of the commutator is satisfied for our interior unbounded
product $(D,(X \hot_B Y)^2)$. Remark that the commutator is well--defined by
Lemma \ref{domaininclus}.

\begin{lemma}\label{l:Cond1}
The commutator
\[
\begin{split}
& \Big[  \matr{cc}{
D_1 \ot 1 \pm i\, 1 \ot_{\Na^0} D_2 & 0 \\
0 & \pm i D_2
}, \matr{cc}{
0 & T_x \\
T_x^* & 0
}\Big] \\
& \q : \big( \sD(D_1 \ot 1) \cap \sD(1 \ot_{\Na^0} D_2)\big) \op \sD(D_2)
\to (X \hot_B Y) \op Y
\end{split}
\]
extends to a bounded operator for all $x \in \sF$.
\end{lemma}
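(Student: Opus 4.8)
The plan is to expand the $2\times 2$ commutator and bound its two off--diagonal entries. Abbreviating $S:=D_1\ot 1$ and, for this proof only, $T:=1\ot_{\Na^0}D_2$ (not to be confused with the multiplication operators $T_x$), a direct computation shows that the block commutator equals
\[
\matr{cc}{0 & (S\pm iT)T_x\mp i\,T_x D_2 \\ \pm i\,D_2 T_x^{*}-T_x^{*}(S\pm iT) & 0},
\]
and by Lemma \ref{domaininclus} each entry is well defined on the indicated domain. It therefore suffices to show that $E^{\pm}:=(S\pm iT)T_x\mp i\,T_x D_2:\sD(D_2)\to X\hot_B Y$ extends to a bounded operator for both choices of sign. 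Once this is known the lower--left entry is handled by duality: since $S$, $D_2$ and $T=1\ot_{\Na^0}D_2$ are selfadjoint (the latter by Theorem \ref{t:extright}), pairing against vectors of $\sD(D_2)$ and invoking the domain inclusions of Lemma \ref{domaininclus} shows that $\pm i\,D_2 T_x^{*}-T_x^{*}(S\pm iT)$ coincides on $\sD(S)\cap\sD(T)$ with $-\bigl(\overline{E^{\mp}}\bigr)^{*}$, which is bounded by the first part.

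To bound $E^{\pm}$ I would first peel off the horizontal part. Since $x=a\cd(D_1-i)^{-1}(\xi)$ with $a\in\sA$, $\xi\in X_1$, one has $x\in\sD(D_1)$ and $D_1 x\in X$, so $S\,T_x=T_{D_1 x}$ is bounded and $E^{\pm}=T_{D_1 x}\pm i\,(T\,T_x-T_x D_2)$. The only delicate point is that $x$ itself need not lie in $X_1$, so the explicit connection formula of Theorem \ref{t:extright} cannot be applied to $x\ot y$ directly; instead one factors $T_x=a\cd(S-i)^{-1}\,T_\xi$ with $T_\xi(y)=\xi\ot y$, and now applies Theorem \ref{t:extright} to $\xi\in X_1$, $y\in\sD(D_2)$: this yields $T_\xi(\sD(D_2))\su\sD(T)$ and $T\,T_\xi=T_\xi D_2+c(\Na^0)\ci T_\xi$ on $\sD(D_2)$, the operator $c(\Na^0)\ci T_\xi$ being bounded since $\Na^0$ is completely bounded. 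Substituting, and then commuting $T$ past $a$ (using that $a$ preserves $\sD(T)$ and that $[T,a]$ is bounded) and past the resolvent $(S-i)^{-1}$ (using $[T,(S-i)^{-1}]=(S-i)^{-1}[S,T](S-i)^{-1}$, valid on $\sD(T)$ by Definition \ref{def:Correspondence}(4)), one arrives on $\sD(D_2)$ at
\[
T\,T_x-T_x D_2=[T,a](S-i)^{-1}T_\xi+a\,(S-i)^{-1}[S,T](S-i)^{-1}T_\xi+a\,(S-i)^{-1}c(\Na^0)\ci T_\xi .
\]

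Each summand on the right is bounded: the first because $[1\ot_{\Na^0}D_2,a]=[T,a]$ extends to a bounded operator for $a\in\sA$ by Definition \ref{def:Correspondence}(3); the second because $[D_1\ot 1,1\ot_{\Na^0}D_2](D_1\ot 1-i)^{-1}=[S,T](S-i)^{-1}$ extends to a bounded operator by Definition \ref{def:Correspondence}(4), which, applied to the vectors $T_\xi(y)=\xi\ot y\in\sD(T)$, also supplies the domain inclusions that justify the resolvent commutation above; and the third because $c(\Na^0)\ci T_\xi$ is bounded. This gives $E^{\pm}$ bounded, with norm at most a constant times $\|\xi\|_{X_1}$, which finishes the proof. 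I expect the main obstacle to be exactly this domain bookkeeping --- verifying that $1\ot_{\Na^0}D_2$ may legitimately be commuted through $(D_1\ot 1-i)^{-1}$ on the vectors $\xi\ot y$, which is what the relative boundedness hypothesis Definition \ref{def:Correspondence}(4) guarantees via Theorem \ref{t:KaaLes1}(1) --- together with the extra care forced by $x\notin X_1$, which makes it necessary to factor out $a$ and the resolvent before Theorem \ref{t:extright} becomes usable.
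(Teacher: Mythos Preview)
Your treatment of the upper--right entry is essentially the paper's: both factor $T_x = a(S-i)^{-1}T_\xi$, peel off the bounded piece $ST_x$, commute $T$ past $a$ using Definition~\ref{def:Correspondence}(3) and past the resolvent using Definition~\ref{def:Correspondence}(4), and finish with $TT_\xi - T_\xi D_2 = c(\Na^0)\circ T_\xi$ from Theorem~\ref{t:extright}.

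For the lower--left entry you diverge from the paper, and as written there is a gap. You assert that it ``coincides on $\sD(S)\cap\sD(T)$ with $-(\overline{E^{\mp}})^{*}$, which is bounded by the first part.'' But in Hilbert $C^*$--modules a bounded operator need not be adjointable, so $(\overline{E^{\mp}})^{*}$ is not known to exist, and boundedness of $\overline{E^{\mp}}$ alone says nothing about any adjoint. The paper flags precisely this point: adjointability of the upper--right entry is \emph{not} established, so the lower--left entry must be treated separately. Your duality idea can nonetheless be salvaged: from the pairing identity $\langle L z, y\rangle = -\langle z, \overline{E^{\mp}} y\rangle$ (valid first for $y\in\sD(D_2)$ by selfadjointness of $S$, $T$, $D_2$ and Lemma~\ref{domaininclus}, then for all $y\in Y$ by density), where $L$ denotes the lower--left entry and $z\in\sD(S)\cap\sD(T)$, one substitutes $y = Lz\in Y$ to obtain $\|Lz\|^{2} \le \|\overline{E^{\mp}}\|\,\|z\|\,\|Lz\|$, hence $\|Lz\|\le \|\overline{E^{\mp}}\|\,\|z\|$. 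This repairs your argument and, a posteriori, also yields the adjointability of $\overline{E^{\mp}}$.

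The paper instead handles the lower--left entry by a direct computation: after the same commutation steps it reduces to $T_\xi^{*}(1\ot_{\Na^0}D_2) - D_2 T_\xi^{*}$, replaces $\Na^0$ by the Gra{\ss}mann connection via Proposition~\ref{p:compself} (a bounded selfadjoint perturbation), and then invokes Lemma~\ref{commcompact} to see that $T_\xi^{*}\diag(D_2) - D_2 T_\xi^{*}$ is bounded. This route avoids any adjointability question at the cost of a second explicit calculation; your corrected duality argument is shorter.
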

\begin{proof}
Let $x = a \cd (D_1 -i)^{-1}(\xi)$ with $\xi \in X_1$ and $a \in \sA$.
Then the identity
\[
\begin{split}
(D_1 \ot 1) T_x 
& = (D_1 a (D_1 -i)^{-1} \ot 1) T_\xi \\
& = ([D_1,a] (D_1 - i)^{-1} \ot 1) T_\xi + (a D_1(D_1 -i)^{-1} \ot 1) T_\xi
\end{split}
\]
proves that the operator $(D_1 \ot 1) T_x : \sD(D_2) \to X \hot_B
Y$ extends to a bounded operator. A similar calculation shows that $T_x^* (D_1
\ot 1) : \sD(D_1 \ot 1) \cap \sD(1 \ot_{\Na^0} D_2) \to Y$ extends to a
bounded operator $X \hot_B Y \to Y$. We therefore only need to prove that the
commutator
\[
\begin{split}
& \Big[  \matr{cc}{
1 \ot_{\Na^0} D_2 & 0 \\
0 & D_2
}, \matr{cc}{
0 & T_x \\
T_x^* & 0
}\Big] \\
& \q : \big( \sD(D_1 \ot 1) \cap \sD(1 \ot_{\Na^0} D_2)\big) \op \sD(D_2)
\to (X \hot_B Y) \op Y
\end{split}
\]
extends to a bounded operator.

This is equivalent to the boundedness of the two operators
\begin{align}
    (1 \ot_{\Na^0} D_2) T_x - T_x D_2 :& \sD(D_2) \to X \hot_B Y, \label{eq:firstcomm} \\
T_x^* (1 \ot_{\Na^0} D_2) - D_2 T_x^* :& \sD(D_1 \ot 1) \cap \sD(1 \ot_{\Na^0}
D_2) \to Y.
          \label{eq:seccomm}
\end{align}

To prove the boundedness of \eqref{eq:firstcomm} we calculate
\begin{equation}\label{eq:nabcommid}
\begin{split}
(1 &\ot_{\Na^0} D_2) T_x =
(1 \ot_{\Na^0} D_2) a ((D_1 -i)^{-1} \ot 1) T_\xi \\
& =
[1 \ot_{\Na^0} D_2, a] ((D_1 -i)^{-1} \ot 1) T_\xi
+ a [1 \ot_{\Na^0} D_2,(D_1 -i)^{-1} \ot 1] T_\xi \\
& \qq + a ((D_1 -i)^{-1} \ot 1)(1 \ot_{\Na^0} D_2)T_\xi.
\end{split}
\end{equation}
Since $(X_1,\Na^0)$ is a correspondence the first two summands on the rhs are bounded and
we can thus restrict our attention to the unbounded operator 
$(1 \ot_{\Na^0} D_2)T_\xi - T_\xi D_2 : \sD(D_2) \to X \hot_B Y$. 
However, by Theorem \ref{t:extright} the latter equals
$c \ci T_{\Na^0(\xi)}$ where $c : X \hot_B \B(Y) \hot_B Y \to X \hot_B Y$ 
is the evaluation map. This proves that the commutator in \eqref{eq:firstcomm} 
extends to a bounded operator.

The boundedness of \eqref{eq:seccomm} would follow from that of
\eqref{eq:firstcomm} if we knew that \eqref{eq:firstcomm} is
\emph{adjointable}. Since this is not established yet we need to prove
the boundedness of \eqref{eq:seccomm} separately.
By a computation similar to the one carried out
in \eqref{eq:nabcommid} we get that it suffices to prove that the unbounded
operator $T_\xi^* (1 \ot_{\Na^0} D_2) - D_2 T_\xi^* : \sD(D_1 \ot 1) \cap
\sD(1 \ot_{\Na^0} D_2) \to Y$ extends to a bounded operator. Furthermore, by
Proposition \ref{p:compself} we may replace the connection $\Na^0$ by the
Gra{\ss}mann $D_2$--connection $\Gc_{D_2}$. We then have the identity
\begin{equation}\label{}
T_{\xi}^* (1 \ot_{\Gc} D_2) - D_2 T_\xi^* = T_{\xi}^* \diag(D_2)
- D_2 T_\xi^*.
\end{equation}
Notice that we think of $\xi \in X_1 \su B_1^\infty$ as an element of
$B_1^\infty$ in the last identity. Furthermore, we are suppressing the
inclusion $\sD(1 \ot_{\Gc} D_2) \su \sD(\diag(D_2))$. But the right
hand side of \eqref{eq:seccomm} is bounded by Lemma \ref{commcompact}
and the proof is complete.
\end{proof}

We are now ready to state the main theorem of this paper. 

\begin{theorem}\label{t:prodcoincide}
Let $(X,D_1)$ and $(Y,D_2)$ be two odd unbounded Kasparov modules for $(A,B)$
and $(B,C)$ respectively. Suppose that there exists a correspondence
$(X_1,\Na^0)$ from $(X,D_1)$ to $(Y,D_2)$. Let $\Na : X_1 \to X \hot_B \B(Y)$ 
be any completely bounded hermitian $D_2$--connection. Then the even
unbounded Kasparov $A$--$C$ module $((X \hot_B Y)^2,D_1 \ti_{\Na} D_2)$
represents the Kasparov product of $(X,D_1)$ and $(Y,D_2)$.
\end{theorem}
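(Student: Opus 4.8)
The plan is to apply Kucerovsky's criterion, Theorem~\ref{t:kuceodd}, to the odd Kasparov modules $(X,D_1)$, $(Y,D_2)$ together with the even unbounded Kasparov $A$--$C$ module $((X\hot_B Y)^2,D)$, where $D:=D_1\ti_{\Na^0}D_2$ carries the grading $\gam=\diag(1,-1)$; that this is an even unbounded Kasparov module is Theorem~\ref{t:prodmod}. First I would reduce to the case $\Na=\Na^0$: by \eqref{eq:UnbProdOp} and Theorem~\ref{t:extright} the operator $D_1\ti_\Na D_2$ depends affinely on $\Na$, and $D_1\ti_\Na D_2-D_1\ti_{\Na^0}D_2$ extends to a bounded selfadjoint operator (Proposition~\ref{p:compself}), so the affine path $t\mapsto D_1\ti_{(1-t)\Na^0+t\Na}D_2$ runs through selfadjoint regular operators (Theorem~\ref{t:regself}) which, together with compactness of resolvents (Theorem~\ref{t:prodmod}) and norm-continuity of resolvents under bounded perturbations, form a norm-continuous family of even unbounded Kasparov modules; by operator homotopy invariance of $KK$ their bounded transforms all define the same class, so it suffices to treat $\Na^0$. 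Throughout write $S:=D_1\ot 1$ and $T:=1\ot_{\Na^0}D_2$, so that $D=S\si_2+T\si_1$, $(D_1\ot 1)\si_2=S\si_2$, and, by Theorem~\ref{t:KaaLes1} (via Theorem~\ref{t:regself}), $\sD(D)=(\sD(S)\cap\sD(T))^2$ with the inclusions $\sD(D)\hookrightarrow\sD(S)$ and $\sD(D)\hookrightarrow\sD(T)$ continuous.

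Condition (2) of Theorem~\ref{t:kuceodd}, namely $\sD(D)\su\sD((D_1\ot 1)\si_2)=\sD(S)^2$, is then immediate. Condition (1) — boundedness of the commutator of $\diag(D,D_2\si_1)$ with $\diag(0,T_x;T_x^*,0)$ on the dense set $\sF=\pi_1(\sA)(D_1-i)^{-1}(X_1)\su A\cd X$ — unwinds, using that $T_x$ is ``diagonal'' and hence commutes with $\si_1,\si_2$, into precisely the assertions that $(D_1\ot 1)T_x$, $T_x^*(D_1\ot 1)$ and the operators in \eqref{eq:firstcomm} and \eqref{eq:seccomm} are bounded; well-definedness of the commutator is Lemma~\ref{domaininclus} and the boundedness is (the proof of) Lemma~\ref{l:Cond1}.

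The crux is condition (3): $\binn{Dz,(D_1\ot 1)\si_2(z)}+\binn{(D_1\ot 1)\si_2(z),Dz}\ge -C\inn{z,z}$ for all $z=(z_1,z_2)\in\sD(D)$. Using $D=S\si_2+T\si_1$ and expanding the $C$-valued inner products, the left-hand side $q(z)$ equals $2\inn{Sz_1,Sz_1}+2\inn{Sz_2,Sz_2}+i\bigl(\inn{Tz_2,Sz_2}-\inn{Sz_2,Tz_2}\bigr)-i\bigl(\inn{Tz_1,Sz_1}-\inn{Sz_1,Tz_1}\bigr)$. Since $q$ and $z\mapsto\inn{z,z}$ are continuous for the graph norm of $D$ (by the continuous inclusions above) and the positive cone of $C$ is closed, it suffices to verify the estimate on a core for $D$; I would take the core $\sC$ furnished by the proof of Theorem~\ref{t:KaaLes1}, whose elements $z$ satisfy $z_j\in(S-i\mu)^{-1}(\sE)$ for the core $\sE$ for $T$, so that by condition~(1) of Theorem~\ref{t:KaaLes1} one has $Tz_j\in\sD(S)$ and $Sz_j\in\sD(T)$ and the cross-terms become $\inn{Tz_j,Sz_j}-\inn{Sz_j,Tz_j}=\inn{[S,T]z_j,z_j}$, a selfadjoint element of $C$. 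One then absorbs these: writing $w_j:=(S-i\mu)z_j$ and $B_\mu:=[S,T](S-i\mu)^{-1}\in\B(X\hot_B Y)$ (bounded by Definition~\ref{def:Correspondence}(4)) one has $i\inn{[S,T]z_j,z_j}=\tfrac12\inn{(L_\mu+L_\mu^*)w_j,w_j}$ with $L_\mu:=-i(S+i\mu)^{-1}B_\mu$, hence $\pm i\inn{[S,T]z_j,z_j}\ge -\|L_\mu\|\inn{w_j,w_j}=-\|L_\mu\|\bigl(\inn{Sz_j,Sz_j}+\mu^2\inn{z_j,z_j}\bigr)$; as $\|L_\mu\|\le\mu^{-1}\|B_\mu\|$ and, by the resolvent identity, $\|B_\mu\|\le 2\|[S,T](S-i)^{-1}\|$ for $\mu\ge 1$, choosing $\mu$ large makes $\|L_\mu\|\le 2$, whence $q(z)\ge(2-\|L_\mu\|)\bigl(\inn{Sz_1,Sz_1}+\inn{Sz_2,Sz_2}\bigr)-\|L_\mu\|\mu^2\inn{z,z}\ge -C\inn{z,z}$. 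With the three hypotheses of Theorem~\ref{t:kuceodd} verified, the module $((X\hot_B Y)^2,D_1\ti_{\Na^0}D_2)$ represents the interior Kasparov product, and by the first paragraph so does $((X\hot_B Y)^2,D_1\ti_\Na D_2)$. The main obstacle is exactly condition (3): organizing the core reduction so that $[S,T]$ is genuinely defined on the vectors used, and converting the relative boundedness of $[S,T]$ from Definition~\ref{def:Correspondence}(4) into a semiboundedness estimate — the rescaling $\mu\to\infty$ being what shrinks the relative bound enough to be absorbed by the positive term $2S^2$.
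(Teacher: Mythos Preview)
Your approach is correct and mirrors the paper's: reduce to $\Na=\Na^0$ (the paper does this via the general fact that bounded odd selfadjoint perturbations leave the $KK$-class unchanged, you via an explicit affine homotopy --- both valid), then verify the three hypotheses of Theorem~\ref{t:kuceodd}, using Lemmas~\ref{domaininclus} and~\ref{l:Cond1} for condition~(1) and the domain description $\sD(D)=(\sD(S)\cap\sD(T))^2$ for condition~(2). The only substantive difference is condition~(3): the paper simply invokes \cite[Lemma~7.6]{KaaLes:LGP}, which delivers the quadratic-form estimate $\inn{(S\pm iT)z,(S\pm iT)z}\ge \tfrac12\inn{Sz,Sz}+\inn{Tz,Tz}-C\inn{z,z}$ on all of $\sD(S)\cap\sD(T)$, whereas you reprove a variant of it directly via the $\mu$-rescaling trick. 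Your argument is sound, but note that it leans on $(S-i\mu)^{-1}(\sE)^2$ being a core for $D$ --- this is not part of the \emph{statement} of Theorem~\ref{t:KaaLes1}, so you are implicitly borrowing from its proof in \cite{KaaLes:LGP}, just as the paper borrows Lemma~7.6 from the same source.
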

\begin{proof} $((X\hot_B Y)^2,D_1 \ti_{\Na^0} D_2)$ is an even unbounded Kasparov module by
Theorem \ref{t:prodmod}. For any even unbounded Kasparov module
$(D,Z)$ and any odd selfadjoint operator $R \in \B(X)$ the perturbed
unbounded Kasparov module $(D + R,Z)$ gives rise to the same class in 
$KK$--theory under the bounded transform. Therefore, by 
Proposition \ref{p:compself} it suffices to prove the Theorem for $\Na=\Na^0$.

We apply Theorem \ref{t:kuceodd}. Its first condition is satisfied by
Lemma \ref{l:Cond1}. The second condition in
Theorem \ref{t:kuceodd} is satisfied since 
$\sD(D_1 \ti_{\Na^0} D_2) = (\sD(D_1 \ot 1) \cap \sD(1 \ot_{\Na^0} D_2))^2$.
Finally, the third condition in Theorem \ref{t:kuceodd} follows from 
\cite[Lemma 7.6]{KaaLes:LGP}, which implies that there
exists a constant $C > 0$ such that 
\[
\begin{split}
& \binn{(D_1 \ot 1 \pm i\, 1 \ot_{\Na^0} D_2)(z), 
(D_1 \ot 1 \pm i\, 1 \ot_{\Na^0} D_2)(z)} \\
& \q \geq
\frac{1}{2}\inn{(D_1 \ot 1)(z),(D_1 \ot 1)(z)} 
+ \inn{(1 \ot_{\Na^0} D_2)(z),(1 \ot_{\Na^0} D_2)(z)} - C\inn{z,z}
\end{split}
\]
for all $z \in \sD(D_1 \ot 1) \cap \sD(1 \ot_{\Na^0}D_2)$.
\end{proof}

%%%%%%%%%%%%%%%%%%%%%%%%%%%%%%%%%%%%%%%%%%%%%%%%%%%%%%%%%%%%%%%%%%%%%%%%%}}}
\pagebreak[3]
\section{Application: Dirac--Schr\"odinger operators on complete manifolds}\label{s:GE}   %%%%%%%%%%%%%%%%%%%%%%{{{
%%%%%%%%%%%%%%%%%%%%%%%%%%%%%%%%%%%%%%%%%%%%%%%%%%%%%%%%%%%%%%%%%%%%%%%%%

\subsection{Standing assumptions}\label{ss:GESA} Let $M^m$ be a complete oriented Riemannian
manifold $M$ of dimension $m$ and let $H$ be a separable Hilbert space. We thus have
the operator $*$--module $C^1_0(M,H)$ over the operator $*$--algebra
$C^1_0(M)$. This is a consequence of Proposition \ref{conestar} and Proposition
\ref{conemodu}. The operator $*$--algebra $C^1_0(M)$ sits as a dense
$*$--subalgebra inside the $C^*$--algebra $C_0(M)$ of continuous functions
vanishing at infinity and the inclusion is completely bounded. Thus with the
pair $(X=C^1_0(M,H),C^1_0(M))$ we are in the situation of Convention
\ref{StaConA}. 
Remark that the completeness of the manifold
entails that the operator $*$--algebra $C^1_0(M)$ is $\si$-unital, see Note
\ref{n:AppUniCom}.

Furthermore, let $D_{2,0} : \Ga_c^\infty(M,F) \to L^2(M,F)$ be a first order
elliptic differential operator acting on the sections of the smooth hermitian
vector bundle $F \to M$ over $M$.  We assume that $D_{2,0}$ is symmetric with
respect to the scalar product of $L^2(M,F)$ and that $D_{2,0}$ has
\emph{bounded propagation speed}, that is the symbol, 
$\sigma_{D_2} : T^*M \to \End(F)$ satisfies
\begin{equation}\label{eq:BPS}  %110816
\sup_{\xi\in T^*M, \|\xi\|\le 1}
\|\sigma_{D_2}(\xi)\|=:C_{\textup{ps}}<\infty.
\end{equation}
By the classical Theorem of Chernoff \cite{Che:ESA} the completeness of
$M$ together with the bounded propagation speed assumption imply
the essential selfadjointness of $D_{2,0}$. By $D_2$ we then
denote its selfadjoint closure.

The set-up outlined in this Subsection \ref{ss:GESA} will be in effect
during the remainder of this Section \ref{s:GE}.

\subsection{Hermitian $D_2$--connections}\label{geomdelta}

We shall now see that the composition of the exterior differential and the symbol of the
first order differential operator $D_2$ is an example of a hermitian
$D_2$--connection. In fact we will interpret this composition as a
Gra{\ss}mann $D_2$--connection.

The following small lemma, which should be well-known
will be useful for proving complete boundedness of
the commutator $[D_2,\cd]$.
%\mpar{ (WE SHOULD PROBABLY FIND A REFERENCE HERE INSTEAD OF PROVING IT).\\
%I agree, but where to look? Also the operator space structure on $V$ depends
%a priori on the choice of an iso $V\cong \C^N$.}

\begin{lemma}\label{l:autocomp}
Let $V$ be a $N$--dimensional Hilbert space and let $Z$ be an operator
space. Then any linear map $\al : V \to Z$ is completely bounded with
$\|\al\|_{\cb} \leq N \cd \|\al\|$.
\end{lemma}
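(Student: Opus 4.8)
The plan is to reduce the claim to a matrix norm estimate using a fixed basis of $V$. First I would choose an orthonormal basis $e_1,\dots,e_N$ of $V$, so that every vector $v\in V$ is written as $v=\sum_{k=1}^N \lambda_k e_k$ with $\lambda_k=\inn{v,e_k}$ and $|\lambda_k|\le \|v\|$. Then $\al(v)=\sum_{k=1}^N \lambda_k\,\al(e_k)$, and more generally for a finite matrix $x=(x_{ij})\in M_n(V)$ one has $M_n(\al)(x)=\sum_{k=1}^N (\lambda^{(k)})\cdot \al(e_k)$ where $\lambda^{(k)}=(\inn{x_{ij},e_k})\in M_n(\C)$ is the scalar matrix of $k$-th coordinates and $\al(e_k)\in Z$ is viewed as a diagonal-scalar multiplier.

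The key step is then a norm bound on each scalar coordinate matrix. Since $V$ is a Hilbert space, the coordinate map $v\mapsto \inn{v,e_k}$ is $\C$-linear of norm $\le 1$; applied entrywise this gives a map $M_n(V)\to M_n(\C)$, and because the operator-space norm on $M_n(V)$ dominates the natural (Hilbertian) entrywise norm, one gets $\|\lambda^{(k)}\|_{\C}\le \|x\|_{M_n(V)}$ for each $k$. More carefully, each $\lambda^{(k)}$ can be realized as $p\cdot x$ after pairing, where $p$ is built from $e_k$; the axioms of an operator space (Definition \ref{def:OSpace}, items (1) and (3)) ensure $\|\lambda^{(k)}\|_{\C}\le \|x\|_{M_n(V)}$. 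Then
\[
\|M_n(\al)(x)\|_{M_n(Z)} \le \sum_{k=1}^N \|\lambda^{(k)}\|_{\C}\cdot \|\al(e_k)\|_Z \le \Big(\sum_{k=1}^N \|\al(e_k)\|_Z\Big)\cdot \|x\|_{M_n(V)}.
\]
Finally I would observe $\|\al(e_k)\|_Z\le \|\al\|$ for each $k$, so $\sum_{k=1}^N \|\al(e_k)\|_Z \le N\|\al\|$, giving $\|M_n(\al)\|\le N\|\al\|$ uniformly in $n$, hence $\|\al\|_{\cb}\le N\|\al\|$.

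The only mildly delicate point — the main obstacle, such as it is — is making precise that the entrywise coordinate functionals induce contractions $M_n(V)\to M_n(\C)$ with respect to the operator-space matrix norm on $M_n(V)$; this is where one must invoke the operator-space axioms rather than any ambient Hilbert-space inner product on $M_n(V)$, which a priori is not the right norm. Once that contraction property is in hand, the triangle inequality over the $N$ basis directions does the rest, and the factor $N$ is exactly the number of summands. Everything else is routine linear algebra.
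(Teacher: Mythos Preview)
Your proposal is correct and follows essentially the same route as the paper: pick an orthonormal basis, decompose a matrix $x\in M_n(V)$ into its scalar coordinate matrices $\xi_k\in M_n(\cc)$, bound each $\|\xi_k\|_\cc\le\|x\|$, and finish with the triangle inequality over the $N$ basis directions. The only difference is in how the key contraction $\|\xi_k\|_\cc\le\|x\|$ is justified: the paper simply fixes the column Hilbert space structure on $V$ (declaring $\|\xi\|=\|\inn{\xi,\xi}^{1/2}\|_\cc$), under which the inequality is immediate from $\xi_k^*\xi_k\le\sum_j\xi_j^*\xi_j$; you instead argue abstractly that the coordinate functionals are completely contractive via the operator-space axioms, which is valid (it is the standard fact that linear functionals on an operator space satisfy $\|\phi\|_{\cb}=\|\phi\|$, proved by writing $\|[\phi(x_{ij})]\|=\sup_{\alpha,\beta}|\phi(\alpha^*x\beta)|$ and invoking axiom~(1)), though your phrasing ``realized as $p\cdot x$ after pairing'' could be made more precise along these lines.
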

\begin{proof}
We remark that $V$ has the structure of an operator space using the matrix norm
$\|\xi\| = \|\inn{\xi,\xi}^{1/2}\|_{\cc}$, $\xi \in M(V)$. Let
$\{e_i\}_{i=1}^N$ be an orthonormal basis for $V$ and let $\xi \in M_n(V)$ be
an $(n \ti n)$--matrix. We can then write the matrix $\xi$ as the sum $\xi =
\sum_{i=1}^N \xi_i \cd e_i$ for some unique matrices $\xi_i \in M_n(\cc)$. In
particular, we get the inequalities
\[
\| \al(\xi) \|_Z = \| \sum_{i=1}^N \xi_i \cd \al(e_i)\|_Z 
\leq \sum_{i=1}^N \|\xi_i\|_{\cc} \|\al(e_i)\|_Z
\leq N \cd \|\xi\|_V \cd \|\al\|,
\]
which in turn prove the lemma.
\end{proof}

\begin{prop}\label{p:symbext} 
The symbol of $D_2$ determines a completely bounded operator
\[
\si_{D_2} : \Ga_0(T^* M) \to \B\big(L^2(M,F) \big).
\]
Here $\Ga_0(T^*M)$ denotes the Hilbert $C_0(M)$--module of continuous one--forms
which vanish at infinity.
\end{prop}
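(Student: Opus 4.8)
The plan is to reduce the complete boundedness of $\si_{D_2}$ to its fibrewise behaviour and then invoke Lemma \ref{l:autocomp}. First I would record the elementary consequences of linearity of the symbol: since $\si_{D_2}$ is linear on each fibre, \Eqref{eq:BPS} yields $\|\si_{D_2}(\xi)\|_{\End(F_x)} \le C_{\textup{ps}}\cd\|\xi\|$ for all $x \in M$ and all $\xi \in T_x^*M$. Hence, for a one--form $\go \in \Ga_0(T^*M)$, the section $x \mapsto \si_{D_2}(\go(x))$ belongs to $\Ga_0(\End F)\su \Ga_{\textrm b}(\End F)$, so pointwise multiplication by it defines a bounded operator $\si_{D_2}(\go) \in \B(L^2(M,F))$ with $\|\si_{D_2}(\go)\| \le C_{\textup{ps}}\cd\|\go\|$; this already shows that $\si_{D_2}$ is well-defined, bounded, and $C_0(M)$--linear.

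Next I would make the two operator space structures explicit fibrewise. Since $\Ga_0(T^*M)$ is a Hilbert $C_0(M)$--module, a matrix $\xi \in M_n(\Ga_0(T^*M))$ satisfies $\|\xi\|^2 = \|\inn{\xi,\xi}\|_{M_n(C_0(M))}$, and under $M_n(C_0(M)) \cong C_0(M,M_n(\C))$ this equals $\sup_{x\in M}\|\xi(x)\|_{M_n(T_x^*M)}^2$, where the fibre $T_x^*M$ carries exactly the finite-dimensional Hilbert space operator space structure used in Lemma \ref{l:autocomp}. Symmetrically, $M_n(\B(L^2(M,F))) = \B\big(L^2(M,F)^n\big) = \B\big(L^2(M,F^n)\big)$, and for a section $s$ of $M_n(\End F)$ the multiplication operator by $s$ has norm $\sup_{x\in M}\|s(x)\|_{M_n(\End F_x)}$. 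Finally, for each $x$ the restriction $\si_{D_2,x} := \si_{D_2}|_{T_x^*M} : T_x^*M \to \B(F_x)$ is a linear map from the $m$--dimensional Hilbert space $T_x^*M$ into a $C^*$--algebra with $\|\si_{D_2,x}\| \le C_{\textup{ps}}$ by the first paragraph, so Lemma \ref{l:autocomp} gives $\|\si_{D_2,x}\|_\cb \le m\cd C_{\textup{ps}}$. Since $M_n(\si_{D_2})(\xi)$ is precisely multiplication by the section $x\mapsto M_n(\si_{D_2,x})(\xi(x))$, combining these three observations yields
\[
\|M_n(\si_{D_2})(\xi)\| = \sup_{x\in M}\|M_n(\si_{D_2,x})(\xi(x))\|
\le m\cd C_{\textup{ps}}\cd\sup_{x\in M}\|\xi(x)\|_{M_n(T_x^*M)} = m\cd C_{\textup{ps}}\cd\|\xi\|
\]
for every $n$, so $\si_{D_2}$ is completely bounded with $\|\si_{D_2}\|_\cb \le m\cd C_{\textup{ps}}$.

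The only point requiring care is the middle step: verifying that the operator space structure the fibres $T_x^*M$ inherit from the Hilbert $C_0(M)$--module $\Ga_0(T^*M)$ is the one to which Lemma \ref{l:autocomp} applies, and that matrix amplification on either side is computed fibrewise (equivalently, that the norm of a multiplication operator on $L^2$ is the supremum of the pointwise operator norms). Granting this bookkeeping, the statement is an immediate consequence of fibre-linearity of the symbol together with the bounded propagation speed bound \Eqref{eq:BPS}.
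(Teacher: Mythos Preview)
Your proposal is correct and follows essentially the same route as the paper: reduce to the fibrewise maps $\si_{D_2,x}:T_x^*M\to\B(F_x)$, apply Lemma \ref{l:autocomp} together with the bounded propagation speed bound \Eqref{eq:BPS}, and take the supremum over $x$ to obtain $\|\si_{D_2}\|_{\cb}\le m\cd C_{\textup{ps}}$. The paper's proof is terser and leaves the fibrewise identification of the matrix norms implicit, whereas you spell out the bookkeeping, but the argument is the same.
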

\begin{proof}
We remark that $\Ga_0(T^*M)$ is an operator space when equipped with the matrix
norm $\|\om\| := \T{sup}_{x \in M}\|\inn{\om(x),\om(x)}\|^{1/2}_\cc$, $\om \in
M(\Ga_0(T^*M))$. The symbol of $D_2$ defines a linear map $\si_{D_2} : T_x^* M
\to \B(F_x)$ for each $x \in M$. The bounded propagation speed assumption
 \Eqref{eq:BPS} together with Lemma \ref{l:autocomp} then implies
 for each $\om \in M(\Ga_0(T^*M))$
\[
\| \si_{D_2}(\om) \| 
\leq \sup_{x \in M}\|\si_{D_2}(\om(x))\|_{\B(F_x)}
\leq  m\cd C_{\textup{ps}} \cd \sup_{x \in M}\|\om(x)\|_{T_x^*M} = m \cd C_{\textup{ps}} \cd \|\om\|,
\]
hence the claim.
\end{proof}

\begin{cor}\label{cor:D2CB}
The commutator with $D_2$ determines a completely bounded map $[D_2,\cd]
: C^1_0(M) \to \B(L^2(M,F))$.
\end{cor}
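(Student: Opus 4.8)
The plan is to deduce this immediately from Proposition \ref{p:symbext}. The key observation is that for a smooth function $f \in C^1_0(M)$ the commutator $[D_2, f]$ is, by the Leibniz rule for the first order differential operator $D_{2,0}$, the order-zero operator given by Clifford/symbol multiplication by $df$; concretely, on $\Ga_c^\infty(M,F)$ one has $[D_{2,0}, f] = \si_{D_2}(df)$, where $\si_{D_2} : T^*M \to \End(F)$ is the principal symbol. First I would record this pointwise identity and note that it persists after passing to the selfadjoint closure $D_2$, so that $[D_2, f] = \si_{D_2}(df)$ as a bounded operator on $L^2(M,F)$; here one uses that multiplication by $f$ preserves $\sD(D_2)$, which for $f$ of class $C^1$ with $df$ bounded follows from Chernoff's theorem together with the fact that $[D_{2,0}, f]$ is bounded.

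Next I would invoke the factorization of $[D_2,\cd]$ through the exterior derivative. The map $d : C^1_0(M) \to \Ga_0(T^*M)$, $f \mapsto df$, is by the very definition of the norm $\|\cd\|_1$ (see \Eqref{eq:1108042} and Subsection \ref{ss:GEOSA}) completely contractive: indeed, in the operator $*$--algebra structure on $C^1_0(M)$ the matrix norm of $f \in M_n(C^1_0(M))$ dominates $\sup_{x\in M}\|df(x)\|$, and $\Ga_0(T^*M)$ carries the matrix norm $\|\om\| = \sup_{x\in M}\|\inn{\om(x),\om(x)}\|^{1/2}_\cc$ used in Proposition \ref{p:symbext}. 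Composing with the completely bounded symbol map $\si_{D_2} : \Ga_0(T^*M) \to \B(L^2(M,F))$ from Proposition \ref{p:symbext} then yields that
\[
[D_2,\cd] = \si_{D_2} \circ d : C^1_0(M) \longrightarrow \B(L^2(M,F))
\]
is completely bounded, with $\|[D_2,\cd]\|_{\cb} \le m \cd C_{\textup{ps}}$.

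I do not expect any real obstacle here; the corollary is essentially bookkeeping once Proposition \ref{p:symbext} is in hand. The one point deserving a sentence of care is the identification $[D_2, f] = \si_{D_2}(df)$ at the level of the \emph{closed} operator rather than on $\Ga_c^\infty(M,F)$: one checks on the core $\Ga_c^\infty$ that $D_2(f u) - f D_2(u) = \si_{D_2}(df) u$, observes that the right-hand side is a bounded operator, and concludes by continuity that $f$ maps $\sD(D_2)$ into itself with $[D_2,f] = \si_{D_2}(df) \in \B(L^2(M,F))$. After that, complete boundedness is just the composition of two completely bounded maps.
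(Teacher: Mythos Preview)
Your proposal is correct and follows essentially the same route as the paper: factor $[D_2,\cd]$ as $\si_{D_2}\circ d$, use Proposition~\ref{p:symbext} for the complete boundedness of $\si_{D_2}$, and the operator $*$--algebra construction (Remark~\ref{r:CompleteInclusion},~1.) for the complete boundedness of $d$. You add some welcome care about domains (showing $f$ preserves $\sD(D_2)$ via the bounded commutator on the core $\Ga_c^\infty$) and an explicit bound on $\|[D_2,\cd]\|_{\cb}$, but the argument is the same.
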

\begin{proof}
Since $D_2$ is a first order operator we have
$[D_2,f] = \si_{D_2}(df)$ for each $f \in C^1_0(M)$.
The result then follows from Proposition
\ref{p:symbext} since $d : C^1_0(M) \to \Ga_0(T^*M)$ 
is completely bounded by construction, \cf Remark \ref{r:CompleteInclusion}, 1. 
\end{proof}

We emphasize that the operator $*$-algebra structure on $C^1_0(M)$ comes from
the exterior derivative $d$ and not from $D_2$. Thus the above Corollary
is not immediate from Remark \ref{r:CompleteInclusion}. Rather
the bounded propagation speed assumption enters crucially in
Prop. \ref{p:symbext}.

We remark that the left action $C_0(M) \to \B(\Om^1_{D_2})$, \cf Def.
\ref{def:DForms} and the paragraph thereafter, is
essential in this case. It thus follows from the above results that we have
the Gra{\ss}mann $D_2$--connection
\begin{equation}\label{eq:GEGraCon}  %110816
\Gc_{D_2}: C^1_0(M,H) \to C_0(M,H) \hot_{C_0(M)} \B(L^2(M,F)).
\end{equation}
It is not hard to see that $\Gc_{D_2}$ coincides with the composition
of the exterior differential and the symbol of $D_2$. In particular this
composition is a hermitian $D_2$--connection by Proposition
\ref{p:LCHerm}.

%%%%%%%%%%%%%%%%%%%%%%%%%%%%%%%%%%%%%%%%%8<%%%%%%%%%%%%%%%%%%%%%%%%%%%%%%%%
\subsection{Dirac--Schr\"odinger operators}\label{ss:exdirschr}

In addition to the Standing Assumptions \ref{ss:GESA} we assume that
$W$ is another Hilbert space which is continuously and densely
embedded in $H$ such that the inclusion map $W\hookrightarrow H$
is compact. Fix a family of selfadjoint operators
$\{D_1(x)\}_{x \in M}$ parametrized by the manifold $M$
such that the following conditions are satisfied:
\begin{enumerate}\renewcommand{\labelenumi}{\textup{(A \arabic{enumi})}}
\item\label{Ass1} 
The map $D_1 : M \to \B(W,H)$ is weakly differentiable. This means that the map
  $x \mapsto \inn{D_1(x)\xi,\eta}$ is differentiable for all $\xi \in W$ and
  $\eta \in H$. Furthermore, we suppose that the weak derivative $d(D_1)(x) :
  W \to H \ot T_x^*(M)$ is bounded for each $x \in M$ and that the supremum
  $\sup_{x \in M}\|d(D_1)(x)\| =: K < \infty$ is finite.

\item\label{Ass2} The domain, $\sD(D_1(x))=W$, is independent of $x\in M$
and equals $W$. Moreover, there exist constants
  $C_1,C_2 > 0$ such that
\begin{equation}\label{eq:Ass2}
C_1 \|\xi\|_{W} \leq \|\xi\|_{D_1(x)} \leq C_2 \|\xi\|_{W}
\end{equation}
for all $\xi \in W$ and all $x \in M$. Thus the graph norms are uniformly
equivalent to the norm $\|\cd\|_W$ of $W$.
\end{enumerate} 
\newcommand{\Aref}[1]{\textup{(A \ref{#1})}}
\newcommand{\AssA}{\Aref{Ass1}}
\newcommand{\AssB}{\Aref{Ass2}}
\begin{remark}\label{r:DS}
1. These assumptions correspond to the assumptions
(A-1), (A-2) of \cite{RobSal:SFM} in the one-dimensional case. When comparing,
note that in our \Aref{Ass2} it suffices to assume the first inequality in
\Eqref{eq:Ass2}. The second then follows from the Closed Graph Theorem
and the assumption $\sD(D_1(x))=W$.

2. \Aref{Ass1} implies that $M\ni x\mapsto D_1(x)\in \B(W,H)$
is a continuous map from $M$ into the bounded linear operators $W\to H$. 
To see this  we note that for $\xi\in W, \eta\in H$ and $x,y$ 
in a geodesic coordinate system, with $\gamma(x,y)$ denoting the unique
shortest smooth path from $x$ to $y$, 
\begin{equation}\label{eq:5}  %110816
\begin{split}
    \Bigl| \inn{(D_1(x)-D_1(y)) \xi, \eta} \Bigr|
      &=  \Bigl| \int_{\gamma(x,y)}\inn{d(D_1(s))\xi, \eta}\Bigr|   \\
      & \le \sup_{s \in M} \|d(D_1)(s)\| 
            \cd \dist(x,y) \cd \|\xi \|_W \cd \|\eta\|_H,
\end{split}    
\end{equation}
hence at least locally $\|D_1(x)-D_1(y)\|\le K \cd \dist (x,y)$.

The assumption \Aref{Ass2} clearly implies that the supremum $\T{sup}_{x \in M}\|D_1(x)\|$ is finite.

As a consequence of these observations we get that the assignment $D_1(f)(x) :=
D_1(x)(f(x))$ defines a bounded operator $D_1 : C_0(M,W) \to C_0(M,H)$,
which may also be viewed as an unbounded operator in $C_0(M,H)$ with
domain $C_0(M,W)$. It is not hard to verify that our conditions on
the family $\{D_1(x)\}_{x \in M}$ imply that $D_1$ is a well-defined
selfadjoint and regular operator, \cf \cite[Theorem 4.2, 2. and Theorem 5.8]{KaaLes:LGP}.
%We will use the notation $D_1(x) := D_1 \ot
%1$ for the associated selfadjoint operator on $L^2(M,H \ot F)$.
\end{remark}
% This means that it is continuously differentiable with respect to the weak
% operator topology and that there exists a constant $C > 0$ such that
% \[
% \|D_1(x) \xi\|_H + \|d(D_1)(x) \xi\|_{H\ot T^*_x M} \leq C \cd \|\xi\|_W
% \]
% for all $\xi \in W$ and all $x \in M$. Here $d(D_1)$ denotes the exterior
% derivative of the family of unbounded operators. Thus, by definition,
% \[
% d(D_1)(x) = \sum_{j=1}^N \frac{\pa D_1}{\pa x_j}(x) \ot (dx_j)_x 
% : W \to H \ot T^*_x M
% \]
% for any coordinate system $(x_1,\ldots,x_N)$ on an open neighborhood of $x \in
% M$.
% \item The family of unbounded operators $\{D_1(x)\}$ is uniformly
%   selfadjoint. This means that $D_1(x) : \sD(D_1(x)) \to H$ is selfadjoint for
%   each $x \in M$ and that there exists a constant $C>0$ such that
% \[
% \|\xi\|^2_W \leq C\cd( \|D_1(x)\xi\|_H^2 + \|\xi\|_H^2)
% \]
% for all $\xi \in W$ and all $x \in M$.
% \end{enumerate}

%
% The aim of this subsection is then to show that our techniques imply the
% selfadjointness of the Dirac--Schr\"odinger operator
% \begin{equation}\label{eq:dirschr}
% \matr{cc}{
% 0 & D_1 - i D_2 \\
% D_1 + i D_2 & 0
% } : \big( \sD(D_1) \cap \sD(D_2)\big)^2 \to \big( L^2(M,\sH \ot F)\big)^2.
% \end{equation_0}

We remind the reader of the Standing Assumptions \ref{ss:GESA}
and the Gra{\ss}mann $D_2$--connection $\Gc_{D_2}$, \Eqref{eq:GEGraCon}. 
We have identifications
\begin{equation}\label{eq:Ident}
%\begin{split}
C_0(M,H) \hot_{C_0(M)} L^2(M,F) \cong (L^2(M,F))^\infty \cong L^2(M, H \ot F)  %\\
       \cong H\ot  L^2(M,F)
%\end{split}
\end{equation}
of Hilbert spaces and hence, by slight abuse of notation 
$1 \ot_{\Gc} D_2 = \diag(D_2) = D_2$. 
We notice that $\Ga_c^\infty(M,H \ot F)$ is a core for $D_2 = 1
\ot_{\Gc} D_2$.

We will use the notation $D_1(\cd) := D_1 \ot 1$ for the
selfadjoint operator on $L^2(M,H \ot F)$
associated to $D_1$. This notation is very suggestive since
for a function $f\in L^2(M,W\ot F)$ 
we have the pointwise identity $(D_1(\cd)f)(x) = D_1(x) f(x)$ for a.e. $x\in M$.

We are now going to prove the selfadjointness of the product operator $D_1
\ti_{\Gc} D_2$. To this end we need to verify the conditions in
Theorem \ref{t:regself}. The core is given by the smooth compactly supported
sections, $\sE := \Ga_c^\infty(M,H \ot F)$. We start with the first
condition.

We will use the notation $d(D_1(\cd)) : L^2(W \ot F) \to L^2(H \ot T^*M \ot F)$
for the exterior derivative which is defined fiberwise by $d(D_1)(x) \ot 1 : W
\ot F_x \to H \ot T_x^*M \ot F_x$. Furthermore, by slight abuse of
notation we let $\si_{D_2} : L^2(M, H\ot
T^*M \ot F) \to L^2(M, H \ot F)$ denote the map which is defined fiberwise by
$\xi \ot \om \ot \eta \mapsto \xi \ot \si_{D_2}(\om)(\eta)$. Both $d(D_1(\cd))$
and $\si_{D_2}$ are bounded operators by \AssA\ and Proposition
\ref{p:symbext}.
%
% Remark that $D_2 : \Ga_c^{\infty}(M,\sH \ot F) \to L^2(M,\sH \ot F)$ is
% given by the local expression
% \[
% D_2(s)(x) = \sum_{j=1}^N A_j(x)\frac{\pa s}{\pa x_j}(x) + B(x)s(x)   
% \]
% for $x \in U$ in some coordinate neighborhood with coordinate functions
% $(x_1,\ldots,x_N)$.

\begin{lemma}\label{domlocal}
Let $s \in \Ga_c^\infty(M,H \ot F)$ be a smooth compactly supported section. The
vector $(D_1(\cd) - i\cd \mu )^{-1}(s)$ then lies in the
domain of $D_2$. Furthermore we have for $\mu\in\Rstar $ the explicit formula
\[
\begin{split}
D_2(D_1(\cd) - i\cd\mu)^{-1}(s)
     & = (D_1(\cd) - i\cd\mu )^{-1} D_2(s) \\ 
& \q - (D_1(\cd) - i\cd\mu )^{-1} \si_{D_2}\bigl( d(D_1(\cd)) \bigr)
             (D_1(\cd) - i\cd\mu )^{-1}(s)
\end{split}
\]
In particular we also have that $D_2(D_1(\cd) - i\cd\mu)^{-1}(s) \in \sD(D_1(\cd))$.
\end{lemma}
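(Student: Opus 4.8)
The plan is to realise the resolvent $R_\mu:=(D_1(\cd)-i\cd\mu)^{-1}$ on $L^2(M,H\ot F)$ as \emph{fibrewise} multiplication and then to read off the asserted identity as an instance of the Leibniz rule for the first order operator $D_2$.

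First I would record the structure of $R_\mu$. The operator $D_1(\cd)=D_1\ot 1$ is, under the identification \eqref{eq:Ident}, multiplication by the operator valued function $x\mapsto D_1(x)\ot 1\in\B(W,H)$ and contains no differentiation along $M$; hence $R_\mu$ is multiplication by $x\mapsto(D_1(x)-i\cd\mu)^{-1}\in\B(H,W)$. By \AssB\ this is uniformly bounded as a map into $\B(H,W)$ (bound $\|(D_1(x)-i\cd\mu)^{-1}\eta\|_W$ by $C_1^{-1}$ times the $D_1(x)$--graph norm of $(D_1(x)-i\cd\mu)^{-1}\eta$, and the latter by $(|\mu|^{-2}+4)^{1/2}\|\eta\|$), and by \AssA, the resolvent identity, and the local Lipschitz estimate for $D_1$ from Remark \ref{r:DS} it is locally Lipschitz and weakly differentiable with bounded weak derivative $dR_\mu=-R_\mu\,d(D_1(\cd))\,R_\mu$, where $d(D_1(\cd))$ is bounded by \AssA. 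Consequently $u:=R_\mu(s)$, being the fibrewise product of such a $\B(H,W)$--valued function with the \emph{smooth compactly supported} section $s$, is a compactly supported section of $W\ot F$ which is locally Lipschitz, i.e.\ weakly differentiable with bounded, compactly supported weak derivative given a.e.\ by the Leibniz rule; in particular $u\in\sD(D_1(\cd))$.

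Next I would put $u$ into the domain of $D_2$. By essential selfadjointness of $D_{2,0}$ (Chernoff, Subsection \ref{ss:GESA}) the self\-adjoint closure $D_2$ equals the maximal operator, so $\sD(D_2)=\{v\in L^2(M,H\ot F):D_2v\in L^2(M,H\ot F)\ \textup{distributionally}\}$; any compactly supported, weakly differentiable section with $L^2$ weak derivative belongs to it (in a chart $D_2=\sum_j\si_{D_2}(dx^j)\partial_j+(\textup{zeroth order})$ has smooth coefficients), and for such a section the distributional $D_2$ agrees a.e.\ with the expression read off from this local formula. Hence $u\in\sD(D_2)$ and $D_2u$ may be computed a.e. Since $R_\mu$ acts on the $H$--factor of $H\ot F$, it commutes with the principal symbol of $D_2$, which acts on $F$; the first order Leibniz rule therefore gives, a.e.\ on $M$,
\[
D_2\big(R_\mu s\big)=R_\mu\,D_2(s)+\si_{D_2}\bigl(dR_\mu\bigr)\,s=R_\mu\,D_2(s)-R_\mu\,\si_{D_2}\bigl(d(D_1(\cd))\bigr)\,R_\mu(s),
\]
after inserting $dR_\mu=-R_\mu\,d(D_1(\cd))\,R_\mu$ and using once more that the $\B(H)$--valued factors commute with $\si_{D_2}$. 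This is exactly the claimed formula with $u=(D_1(\cd)-i\cd\mu)^{-1}(s)$. For the final assertion, the right hand side is $R_\mu$ applied to $D_2(s)-\si_{D_2}(d(D_1(\cd)))R_\mu(s)\in L^2(M,H\ot F)$ --- the second summand being in $L^2$ by Proposition \ref{p:symbext} and \AssA\ --- so it lies in $\sD(D_1(\cd))$.

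The main obstacle is the rigorous Leibniz step, i.e.\ identifying the distributional action of the unbounded first order operator $D_2$ on the fibrewise product $R_\mu s$ with the classical product rule; this hinges on the weak differentiability, with controlled derivative, of the operator valued resolvent $x\mapsto(D_1(x)-i\cd\mu)^{-1}$, which I would extract from \AssA, \AssB\ and the resolvent identity along the lines already used in Remark \ref{r:DS} and in \cite{KaaLes:LGP}. Everything else is bookkeeping with the identifications \eqref{eq:Ident} and the fibrewise nature of $R_\mu$.
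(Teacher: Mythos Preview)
Your proposal is correct and follows essentially the same route as the paper. The paper carries out explicitly what you call the ``rigorous Leibniz step'': it pairs $(D_1(\cd)-i\cd\mu)^{-1}s$ with $D_2 t$ for a test section $t$ supported in a single chart, writes $D_2=\sum_j A_j\,\partial_{x_j}+B$ there, differentiates $x\mapsto\inn{(D_1(x)-i\cd\mu)^{-1}s(x),t(x)}$ using weak differentiability of $D_1$ and the resolvent identity, integrates by parts, and globalises by a partition of unity --- precisely the weak/distributional computation underlying your Leibniz rule, together with the identification of $\sD(D_2)$ with the maximal domain via essential selfadjointness.
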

\begin{proof}
Let us consider a smooth compactly supported section $t \in \Ga_c^\infty(M,H
\ot F)$ with support contained in a single coordinate patch $U \su M$ with
coordinates $(x_1,\ldots,x_m)$. 

We start by noting that the function $x \mapsto \inn{(D_1(x) - i \cd \mu)^{-1}s(x),t(x)}$ is differentiable with partial derivatives given by
\[
\begin{split}
& \frac{\pa}{\pa x_j}\inn{(D_1(x) - i \cd \mu)^{-1}s(x),t(x)} \\
& \q =
\binn{(D_1(x) - i \cd \mu)^{-1}\frac{\pa s(x)}{\pa x_j},t(x)}
+ \binn{(D_1(x) - i \cd \mu)^{-1}s(x),\frac{\pa t(x)}{\pa x_j}} \\
& \qq -
\binn{(D_1(x) - i \cd \mu)^{-1}\frac{\pa D_1(x)}{\pa x_j}
(D_1(x) - i \cd \mu)^{-1} t(x),s(x)}.
\end{split}
\]

Now, suppose that $D_2$ is given by the local formula $\sum_{j=1}^m A_j
\frac{\pa}{\pa x_j} + B$ over $U$. Using the above computation, we then get
that
\[
\begin{split}
 \int_M& \inn{(D_1(x) - i \cd \mu)^{-1}s(x),D_2 t (x)}\, d\T{vol} \\
        = & \sum_j \int_M \inn{(D_1(x) - i \cd \mu)^{-1}A_j(x)^*s(x), 
            \frac{\pa t(x)}{\pa x_j}} \,d\T{vol} \\
       & \q +\int_M \inn{(D_1(x) - i \cd \mu)^{-1}B(x)^*s(x),t (x)}\, d\T{vol}
      \end{split}
\]
\[
\begin{split}
   \q =& \int_M \inn{(D_1(x) - i \cd \mu)^{-1}D_2s(x),t(x)} \, d\T{vol} \\
       & \q 
+ \sum_j \int_M \inn{(D_1(x) - i \cd \mu)^{-1}\frac{\pa D_1(x)}{\pa x_j}
(D_1(x) - i \cd \mu)^{-1} A_j(x)^*s(x), t(x)} \, d\T{vol} \\
     = & \int_M \inn{(D_1(x) - i \cd \mu)^{-1}D_2s(x),t(x)} \, d\T{vol} \\
    & \q - \int_M \inn{(D_1(x) - i \cd \mu)^{-1}\si_{D_2}(d D_1(x))
(D_1(x) - i \cd \mu)^{-1}s(x), t(x)} \, d\T{vol}.
\end{split}
\]
The claim of the lemma now follows by a partition of unity argument.
\end{proof}

\begin{theorem}\label{t:DSSA} Under the standing assumptions
and \AssA, \AssB\ the Dirac--Schr\"odinger operator
\[
\matr{cc}{ 0 & D_1(\cd) - i D_2 \\
D_1(\cd) + i D_2 & 0 } \\
: \big( \sD(D_1(\cd)) \cap \sD(D_2) \big)^2 \to L^2(M,H \ot F)^2
\]
associated with the family of unbounded operators $\{D_1(x)\}_{x\in M}$ and the
differential operator $D_2$ agrees with the unbounded product operator $D_1
\ti_{\Gc} D_2$ and it is selfadjoint.
\end{theorem}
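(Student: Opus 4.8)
The plan is to reduce the statement to the abstract results already established, namely Theorem \ref{t:regself} (through Theorem \ref{t:KaaLes1}) for the selfadjointness, and the identifications of \Eqref{eq:Ident} together with the discussion in Subsection \ref{ss:exdirschr} for the matching of operators. First I would record the identification of operators, which is essentially bookkeeping: since $X_1 = C^1_0(M,H) = \bigl(C^1_0(M)\bigr)^\infty$ by Proposition \ref{conemodu}, the completely bounded idempotent $P$ is the identity, so $1\ot_\Gc D_2 = \diag(D_2)$, and under \Eqref{eq:Ident} this is simply $D_2$ on $L^2(M,H\ot F)$; similarly $D_1\ot 1 = D_1(\cd)$ on $X\hot_{C_0(M)} L^2(M,F)\cong L^2(M,H\ot F)$. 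The explicit formula of Theorem \ref{t:extright}, combined with $c(\Gc_{D_2})(f\ot y) = \si_{D_2}(df)(y)$ and the first--order Leibniz rule $D_2(f\cd y) = f\cd D_2(y) + \si_{D_2}(df)(y)$, confirms that $1\ot_\Gc D_2$ literally coincides with $D_2$ as an unbounded operator. Hence the anti--diagonal Dirac--Schr\"odinger operator of the statement is precisely $D_1\ti_\Gc D_2$, acting on $\bigl(\sD(D_1(\cd))\cap\sD(D_2)\bigr)^2 = \bigl(\sD(D_1\ot 1)\cap\sD(1\ot_\Gc D_2)\bigr)^2$.

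For the selfadjointness I would invoke Theorem \ref{t:regself} with the hermitian $D_2$--connection $\Na^0 := \Gc_{D_2}$. The hypotheses of Theorem \ref{t:extright} hold under the Standing Assumptions \ref{ss:GESA}: $C^1_0(M)$ is $\si$--unital because $M$ is complete (Note \ref{n:AppUniCom}); the derivation $[D_2,\cd]$ on $C^1_0(M)$ is completely bounded (Corollary \ref{cor:D2CB}), and $C^1_0(M)$ preserves $\sD(D_2)$; the relevant actions are essential (Subsection \ref{geomdelta}); and $D_1 = D_1(\cd)$ is selfadjoint and regular on $X = C_0(M,H)$ (Remark \ref{r:DS}), whence so is $D_1\ot 1$. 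It therefore remains only to verify the two conditions of Theorem \ref{t:KaaLes1} for $S := D_1(\cd)$, $T := D_2$ and the core $\sE := \Ga_c^\infty(M,H\ot F)$, which is a core for $D_2$ by Chernoff's theorem.

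Condition (1) of Theorem \ref{t:KaaLes1} is exactly Lemma \ref{domlocal}: for $s\in\sE$ and $\mu\in\Rstar$ one has $(D_1(\cd)-i\cd\mu)^{-1}(s)\in\sD(D_1(\cd))\cap\sD(D_2)$ and $D_2(D_1(\cd)-i\cd\mu)^{-1}(s)\in\sD(D_1(\cd))$. For condition (2) I would rearrange the explicit formula of Lemma \ref{domlocal}, multiplying on the left by $D_1(\cd)-i\cd\mu$ and using $D_1(\cd)(D_1(\cd)-i\cd\mu)^{-1} = 1 + i\mu(D_1(\cd)-i\cd\mu)^{-1}$, to obtain on $\sE$ the identity
\[
[D_1(\cd),D_2]\,(D_1(\cd)-i\cd\mu)^{-1} = -\,\si_{D_2}\bigl(d(D_1(\cd))\bigr)\,(D_1(\cd)-i\cd\mu)^{-1}.
\]
The right--hand side extends to a bounded operator on $L^2(M,H\ot F)$: the resolvent $(D_1(\cd)-i\cd\mu)^{-1}$ maps $L^2(M,H\ot F)$ boundedly into $L^2(M,W\ot F) = \sD(D_1(\cd))$ because the graph norms of the $D_1(x)$ are uniformly equivalent to $\|\cd\|_W$ by \AssB; $d(D_1(\cd))$ is bounded from $L^2(M,W\ot F)$ to $L^2(M,H\ot T^*M\ot F)$ by \AssA; and $\si_{D_2}$ is bounded by Proposition \ref{p:symbext}. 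Both conditions of Theorem \ref{t:KaaLes1} being met, Theorem \ref{t:regself} yields that $D_1\ti_\Gc D_2$ is selfadjoint (and regular).

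I do not expect a serious obstacle to remain: the genuinely analytic input is contained in Lemma \ref{domlocal} and Proposition \ref{p:symbext}, which are already available, and everything else is combinatorial. The one point that needs care is that condition (2) of Theorem \ref{t:KaaLes1} truly couples all of the Standing Assumptions --- completeness and bounded propagation speed (entering through $\si_{D_2}$), weak differentiability with uniformly bounded derivative \AssA, and the uniform graph--norm equivalence \AssB --- and that the rearrangement above must be performed on the core $\sE$, where Lemma \ref{domlocal} is valid, rather than merely formally on all of $\sD(D_2)$.
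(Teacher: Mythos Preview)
Your proposal is correct and follows essentially the same approach as the paper: reduce to Theorem \ref{t:regself} via Theorem \ref{t:KaaLes1}, use Lemma \ref{domlocal} for the domain condition, and derive the commutator identity $[D_1(\cd),D_2](D_1(\cd)-i\cd\mu)^{-1} = -\si_{D_2}\bigl(d(D_1(\cd))\bigr)(D_1(\cd)-i\cd\mu)^{-1}$ from Lemma \ref{domlocal} to obtain boundedness. Your write--up is in fact more explicit than the paper's on the identification of $1\ot_\Gc D_2$ with $D_2$ and on why the resolvent lands in $L^2(M,W\ot F)$ via \AssB.
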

Note that for operators in Hilbert spaces (\textit{i.e.,} Hilbert $C^*$--modules over $\C$)
regularity is not an issue. Therefore, $D_1\ti_{\Gc} D_2$ is automatically regular.
\begin{proof}
By Theorem \ref{t:regself} and Lemma \ref{domlocal} we only need to prove that
the operator $[D_1(\cd),D_2](D_1(\cd) - i\cd\mu)^{-1} : \Ga_c^\infty(H \ot F) \to L^2(H \ot F)$ extends to a bounded operator. Now, by an application of Lemma \ref{domlocal} we have that
\[
\begin{split}
[D_1(\cd), D_2](D_1(\cd) - i\cd\mu)^{-1} 
& = (D_1(\cd) - i \cd \mu) D_2 (D_1(\cd)-i\cd \mu)^{-1}- D_2 \\
& = -\si_{D_2}d(D_1(\cd))(D_1(\cd) - i\cd\mu)^{-1}
\end{split}
\]
and the desired boundedness result follows since $d(D_1(\cd)) : L^2(W \ot F)
\to L^2(H \ot T^*M \ot F)$ and $\si_{D_2} : L^2(H \ot T^*M \ot F) \to L^2(H \ot
F)$ are bounded operators.
\end{proof}

%%%%%%%%%%%%%%%%%%%%%%%%%%%%%%%%%%%%%%%%%8<%%%%%%%%%%%%%%%%%%%%%%%%%%%%%%%%

\subsection{The index of Dirac--Schr\"odinger operators on complete manifolds}
We continue in the setting of the Standing Assumptions \ref{ss:GESA} and 
\AssA, \AssB. On top of these conditions we require
\begin{enumerate}\renewcommand{\labelenumi}{\textup{(A \arabic{enumi})}}
\setcounter{enumi}{2}
\item\label{Ass3} 
that there exist a compact set $K \su M$ and a constant $c>0$ such that
the spectrum $\spec(D_1(x)) \su (-\infty,-c] \cup [c,\infty)$ is uniformly
bounded away from zero for all $x \in M\setminus K$. 
\end{enumerate}
This condition corresponds to (A-3) in \cite{RobSal:SFM}, however we do not
assume that $D_1(x)$ has limits as $x$ approaches infinity.

The ellipticity of $D_2$ implies that the composition
\[
\begin{CD}
\sD(D_2) @>>> L^2(M,F) @>f>> L^2(M,F)
\end{CD}
\]
of the inclusion and multiplication with any $f \in C_0(M)$ is
\emph{compact}. This is immediate for compactly supported smooth $f$
and then follows since $C^\infty_c(M)$ is dense in $C_0(M)$.

It is then not hard to see that the conditions on our
differential operator $D_2$ imply that the pair $(D_2, L^2(M,F))$ 
is an odd unbounded Kasparov
$C_0(M)$--$\cc$ module. We let $[D_2]:= F(D_2, L^2(M,F)) \in
KK^1(C_0(M),\cc) \cong K^1(C_0(M))$ denote the odd $K$--homology class
obtained from the differential operator $D_2$ under the bounded transform,
\cf the beginning of Sec.~\ref{s:URI}.

We shall now see that the family $\{D_1(x)\}$ gives rise to an odd unbounded Kasparov
$\cc$--$C_0(M)$ module after a small modification.

\begin{prop}
Let $\psi \in C_0^1(M)$ be a $C^1$-function which vanishes at infinity
such that $\psi(x) > 0$ for all $x \in M$ and $\psi(x) = 1$ for all $x \in
K$. Then the family $\{\psi^{-1}(x) \cd D_1(x)\}_{x \in M}$ defines an odd
unbounded Kasparov $\cc$--$C_0(M)$ module $(\psi^{-1}\cd D_1, C_0(M,H))$.
\end{prop}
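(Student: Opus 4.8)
The plan is to verify the four requirements of Definition \ref{def:UKM} for the pair $(\psi^{-1}\cd D_1,C_0(M,H))$, with the left $\cc$--action given by scalar multiplication. Two of them are immediate: $C_0(M,H)$ is a countably generated Hilbert $C^*$--module over $C_0(M)$, and every $a\in\cc$ acts as a scalar, hence preserves all domains and satisfies $[\psi^{-1}\cd D_1,a]=0$, so $\sA=\cc$ serves as the dense $*$--subalgebra required in condition (1). The genuine content is therefore the selfadjointness and regularity of $\psi^{-1}\cd D_1$, together with condition (2), the $C_0(M)$--compactness of its resolvent.

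For selfadjointness and regularity the crucial point is that $\psi(x)$ is a \emph{positive scalar}, so it commutes fibrewise with $D_1(x)$, and $\psi^{-1}(x)D_1(x)$ is selfadjoint on the fixed domain $W$ for every $x\in M$. I would globalise this by writing down the resolvents explicitly: let $R_{\pm}\in\B(C_0(M,H))$ be pointwise multiplication by
\[
x\longmapsto\bigl(D_1(x)\pm i\,\psi(x)\bigr)^{-1}\psi(x)\in\B(H).
\]
Using \AssA, \AssB\ and Remark \ref{r:DS}, 2., the family $x\mapsto D_1(x)\pm i\psi(x)\in\B(W,H)$ is norm--continuous, each member is invertible, and the inverses are locally uniformly bounded in $\B(H,W)$; hence $x\mapsto(D_1(x)\pm i\psi(x))^{-1}\psi(x)$ is norm--continuous and bounded as a $\B(H)$--valued function, so $R_{\pm}$ is a well--defined adjointable operator with $R_{+}^{*}=R_{-}$. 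Since $D_1(x)$ is selfadjoint and $\psi(x)$ a real scalar, the cross terms cancel, giving $\|(D_1(x)\pm i\psi(x))\xi\|^{2}=\|D_1(x)\xi\|^{2}+\psi(x)^{2}\|\xi\|^{2}$ and hence $\|R_{\pm}\|\le 1$. A direct fibrewise computation then shows that $R_{\pm}$ maps $C_0(M,H)$ into the natural domain of $\psi^{-1}\cd D_1$, that $(\psi^{-1}\cd D_1\pm i)R_{\pm}=\id$ on $C_0(M,H)$, and that $R_{\pm}(\psi^{-1}\cd D_1\pm i)=\id$ on that domain. Thus $\psi^{-1}\cd D_1$ is densely defined and symmetric with $\psi^{-1}\cd D_1\pm i$ surjective; by the standard criterion (\cf \cite{KaaLes:LGP}) it is selfadjoint and regular, and $R_{-}=(\psi^{-1}\cd D_1-i)^{-1}$.

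For condition (2) recall $\cK(C_0(M,H))=C_0(M,\cK(H))$, so it suffices to show $R_{-}\in C_0(M,\cK(H))$; then $a\cd(\psi^{-1}\cd D_1-i)^{-1}=a\,R_{-}$ is $C_0(M)$--compact for every $a\in\cc$. Each fibre $(D_1(x)-i\psi(x))^{-1}\psi(x)$ is compact on $H$, since it factors through the compact inclusion $W\hookrightarrow H$; it is norm--continuous in $x$ by the discussion above; and it vanishes at infinity, because for $x\notin K$ the spectral calculus of the selfadjoint operator $D_1(x)$ gives
\[
\bigl\|(D_1(x)-i\psi(x))^{-1}\psi(x)\bigr\|_{\B(H)}
=\Bigl(1+\dist\bigl(0,\spec(D_1(x))\bigr)^{2}\psi(x)^{-2}\Bigr)^{-1/2}
\le\bigl(1+c^{2}\psi(x)^{-2}\bigr)^{-1/2},
\]
which tends to $0$ as $x\to\infty$ since $\psi$ vanishes at infinity and $\dist(0,\spec(D_1(x)))\ge c$ by \Aref{Ass3}. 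Hence $R_{-}\in C_0(M,\cK(H))=\cK(C_0(M,H))$, and condition (2) follows.

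The main obstacle is the selfadjointness and regularity. Although $\psi^{-1}(x)D_1(x)$ is selfadjoint on each fibre, $\psi^{-1}$ is \emph{unbounded} (it blows up at infinity), so $\psi^{-1}\cd D_1$ is a genuinely unbounded perturbation of the regular operator $D_1$ and stability of regularity under bounded perturbations does not apply; moreover the graph norms of $\{\psi^{-1}(x)D_1(x)\}_{x\in M}$ are \emph{not} uniformly equivalent to $\|\cd\|_W$, so the argument used for $D_1$ itself (based on \AssA, \AssB) is unavailable. This is exactly why one passes to the manifestly bounded operators $R_{\pm}$; here the fact that $\psi(x)$ is a real \emph{scalar} commuting with $D_1(x)$ makes the cross terms vanish and produces the clean identity $\|(D_1(x)\pm i\psi(x))\xi\|^{2}=\|D_1(x)\xi\|^{2}+\psi(x)^{2}\|\xi\|^{2}$, which underlies both the bound $\|R_{\pm}\|\le1$ and the vanishing--at--infinity estimate.
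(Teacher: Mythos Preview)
Your argument is correct, and for the compactness of the resolvent it coincides with the paper's proof almost verbatim: both write $(\psi^{-1}D_1-i)^{-1}=\psi(D_1-i\psi)^{-1}$, observe that each fibre is compact via the compact inclusion $W\hookrightarrow H$, that it depends norm--continuously on $x$, and that it vanishes at infinity thanks to \Aref{Ass3}. Your explicit spectral bound is a slightly sharper version of the paper's remark that $\sup_{x\in M}\|(D_1(x)-i\psi(x))^{-1}\|<\infty$.

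The genuine difference lies in how selfadjointness and regularity are obtained. The paper first writes down the domain $\{f\in C_0(M,W):D_1(f)\in\psi\cdot C_0(M,H)\}$, checks directly that the operator is symmetric and closed, and then invokes the Local--Global Principle \cite[Theorem~4.2,~2.\ and Theorem~5.8]{KaaLes:LGP}: since each localisation $\psi^{-1}(x)D_1(x):W\to H$ is selfadjoint, the global operator is selfadjoint and regular. You instead bypass this machinery by writing down the candidate resolvents $R_\pm$ explicitly, verifying that they are bounded adjointable with $R_+^*=R_-$ and $\|R_\pm\|\le 1$, and checking the algebraic resolvent identities fibrewise. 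This is more elementary and self--contained; the Local--Global Principle is a general tool precisely designed to avoid such case--by--case resolvent constructions, but here the resolvent is so explicit that your direct route costs little. One small point you leave implicit: you should say a word about why the operator is densely defined (e.g.\ compactly supported elements of $C_0(M,W)$ lie in the domain) and closed, since in Hilbert $C^*$--modules the passage from ``$T\pm i$ surjective with bounded adjointable inverse'' to ``selfadjoint and regular'' still presupposes a densely defined closed symmetric operator to start from.
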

\begin{proof}
We define the unbounded operator
\[
\psi^{-1} \cd D_1 : \sD(\psi^{-1} \cd D_1) \to C_0(M,H),
 \quad (\psi^{-1} \cd D_1)(f)(x) = \psi^{-1}(x) \cd D_1(x)(f(x)).
\]
The domain is given by $\sD(\psi^{-1} \cd D_1) = \bigsetdef{f \in C_0(M,W)}{%
D_1(f) \in \psi \cd C_0(M,H)}$.

We start by proving that $\psi^{-1} \cd D_1$ is selfadjoint and regular. 
$\psi^{-1} \cd D_1$ is certainly symmetric. To see that it is closed we
let $\{f_n\}$ be a sequence in the domain such that $f_n\to f$
and $(\psi^{-1} \cd D_1)(f_n) \to g$ is convergent in $C_0(M,H)$. 
It follows that $\{D_1(f_n)\}$
is convergent. But $D_1$ is closed so $f \in \sD(D_1)$ with $D_1(f) = \psi \cd
g$. This proves that $\psi^{-1} D_1$ is closed. The selfadjointness and
regularity now follows by \cite[Theorem 4.2, 2. and Theorem 5.8]{KaaLes:LGP}.
Indeed, the localized unbounded operator at $x \in M$ is simply given by
$\psi^{-1}(x)\cd D_1(x) : W \to H$ which is selfadjoint by assumption.

%Now, let $\mu \in \rr\setminus \{0\}$. We define the operator
%\begin{equation}\label{eq:resolvdefi}
%\begin{split}
%& (\psi^{-1} \cd D_1 - i \cd \mu)^{-1} : C_0(M,H) \to C_0(M,H) \\
%& (\psi^{-1} \cd D_1 - i \cd \mu)^{-1}(f)(x)
%:= \psi(x) \cd (D_1(x) - i \psi(x) \mu)^{-1}(f(x)).
%\end{split}
%\end{equation}
%It is not hard to see that the conditions on the family $\{D_1(x)\}_{x \in
%  M}$ imply that $(D_1(x) - i \psi(x) \mu)^{-1} \in \B(H,W)$ depends
%continuously on the parameter $x$ and is bounded in operator norm,
%\cf Remark \ref{r:DS}. It follows
%that the resolvent in \eqref{eq:resolvdefi} takes values in the domain of
%$\psi^{-1} \cd D_1$. This proves that $\psi^{-1} \cd D_1$ is selfadjoint and
%regular, see \cite[Lemma 9.8]{Lan:HCM}.

Finally we show that the resolvent $(\psi^{-1} \cd D_1 - i)^{-1} = \psi \cd
(D_1 -i\psi)^{-1}$ is compact. To this end we recall that the compact operators
on the Hilbert $C^*$--module $C_0(M,H)$ are given by $\cK(C_0(M,H)) =
C_0(M,\cK(H))$. Now, the operator $(D_1(x) - i \psi(x))^{-1} \in \cK(H)$ is
compact for all $x \in M$ since the inclusion $W \to H$ is compact and it
depends continuously on the parameter $x \in M$ by Remark \ref{r:DS}, 2. and
the resolvent identity. Furthermore, in view of \Aref{Ass3} and the spectral
theorem for unbounded selfadjoint operators we get that $\T{sup}_{x\in
M}\|(D_1(x) - i \psi(x))^{-1}\| < \infty$. Altogether this implies that
$(\psi^{-1} \cd D_1 - i)^{-1}= \psi (D_1-i \psi)^{-1}$ lies in $C_0(M,\cK(H))$,
proving the claim.
\end{proof}

We will use the notation $[D_1] := F(C_0(M,H),\psi^{-1} \cd D_1) \in
KK^1(\cc,C_0(M)) \cong K_1(C_0(M))$ for the odd $K$-theory class obtained from
the parametrized family $\{\psi^{-1}(x) D_1(x)\}_{x \in M}$ under the bounded
transform. As the notation suggests, the class $[D_1]$ is independent of
the choice of function $\psi \in C_0^1(M)$ as long as $\psi(x) > 0$ for
all $x \in M$ and $\psi|_K = 1$.

\begin{lemma}
Let $\psi \T{ and } \phi \in C_0^1(M)$ be two strictly positive $C^1$-functions
which vanish at infinity and with $\psi|_K = 1 = \phi|_K$. The odd unbounded
Kasparov modules $(\psi^{-1} D_1 , C_0(M,H)) \T{ and } (\phi^{-1} D_1 ,
C_0(M,H))$ then represent the same class in $KK^1(\cc,C_0(M))$ under the
bounded transform.
\end{lemma}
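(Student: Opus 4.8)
The plan is to connect the two modules by a straight-line homotopy of the defining functions. Put $\psi_t := (1-t)\psi + t\phi$ for $t\in[0,1]$. Since $\psi$ and $\phi$ are strictly positive, lie in $C^1_0(M)$, and restrict to the constant function $1$ on $K$, the same holds for every $\psi_t$; moreover, because the interpolation is affine, the quantities $\sup_{t,x}\psi_t(x)$, $\sup_t\|d\psi_t\|_\infty$ and the estimate $\sup_{t,x}\|\psi_t(x)(D_1(x)-i\psi_t(x))^{-1}\|<\infty$ (which on $M\setminus K$ uses \Aref{Ass3} and on $K$ uses $\psi_t|_K=1$) are all uniform in $t$. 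No new estimate beyond those already used in the preceding Proposition is therefore required.

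First I would form the Hilbert $C^*$--module $C([0,1],C_0(M,H))$ over the $C^*$--algebra $C_0(M)\ot C([0,1])$ and equip it with the unbounded operator $\widetilde D$ which at parameter $t$ is given by $\psi_t^{-1}\cd D_1$, the domain consisting of those $f$ with $f(t)\in\sD(\psi_t^{-1}D_1)$ for all $t$ and $t\mapsto(\psi_t^{-1}D_1)(f(t))$ continuous. Selfadjointness and regularity of $\widetilde D$ follow verbatim from the argument in the preceding Proposition, now applying \cite[Theorem 4.2, 2. and Theorem 5.8]{KaaLes:LGP} to the family $\{\psi_t^{-1}(x)\cd D_1(x)\}_{(x,t)\in M\times[0,1]}$, each member of which is selfadjoint on $W$. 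Compactness of $(\widetilde D-i)^{-1}$ is seen exactly as before: it acts at $t$ as $\psi_t\cd(D_1-i\psi_t)^{-1}$, the operator $(D_1(x)-i\psi_t(x))^{-1}\in\cK(H)$ is compact, depends continuously on $(x,t)$ by Remark \ref{r:DS}, 2.\ and the resolvent identity, and is uniformly bounded by the estimate noted above; hence $(\widetilde D-i)^{-1}\in C_0(M\times[0,1],\cK(H))=\cK\big(C([0,1],C_0(M,H))\big)$. Boundedness of the commutators $[\widetilde D,a]$ for $a\in C_0(M)$ is inherited fiberwise. Thus $\big(\widetilde D,C([0,1],C_0(M,H))\big)$ is an odd unbounded Kasparov $\cc$--$\big(C_0(M)\ot C([0,1])\big)$ module.

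Applying the bounded transform we obtain a class in $KK^1\big(\cc,C_0(M)\ot C([0,1])\big)$. The two evaluation $*$--homomorphisms $\T{ev}_0,\T{ev}_1:C_0(M)\ot C([0,1])\to C_0(M)$ pull this class back to $F(C_0(M,H),\psi^{-1}D_1)$ and $F(C_0(M,H),\phi^{-1}D_1)$ respectively, since the bounded transform commutes with restriction and $\psi_0=\psi$, $\psi_1=\phi$. As $\T{ev}_0$ and $\T{ev}_1$ are homotopic through the $\T{ev}_t$ they induce the same map on $KK$ by homotopy invariance, \cf \cite[Sec.~18]{Bla:KTO2Ed} and \cite{BaaJul:TBK}, so the two classes coincide. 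The only point requiring genuine care is re-running the selfadjointness, regularity and compact-resolvent verification of the preceding Proposition with the extra parameter $t$ present; this is routine precisely because the affine interpolation keeps the relevant quantities uniformly controlled.
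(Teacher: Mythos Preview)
Your homotopy argument is correct but takes a genuinely different route from the paper. The paper works directly at the level of bounded transforms: it shows that the difference $D_1(\psi^2+D_1^2)^{-1/2}-D_1(\phi^2+D_1^2)^{-1/2}$ lies in $C_0(M,\cK(H))$ (pointwise compactness because $t\mapsto t(\psi^2(x)+t^2)^{-1/2}-t(\phi^2(x)+t^2)^{-1/2}\in C_0(\rr)$, continuity in $x$ via \cite[Prop.~2.2]{Les:USF} and Remark~\ref{r:DS}, vanishing at infinity from \Aref{Ass3}), so the two bounded Kasparov modules differ by a compact perturbation and hence coincide in $KK^1$. This is shorter and avoids constructing any auxiliary module. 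Your approach is more structural and would adapt to situations where a direct compact--perturbation estimate is not available, at the price of re-running the entire Kasparov--module verification with an extra $[0,1]$--parameter. Two minor remarks on your write-up: since the left algebra here is $\cc$, the commutator condition you mention is vacuous; and for the resolvent to land in $C_0(M\times[0,1],\cK(H))$ you need vanishing at infinity rather than merely the uniform bound you state --- but this follows from the very \Aref{Ass3} estimate you invoke, since off $K$ one has $\|\psi_t(x)(D_1(x)-i\psi_t(x))^{-1}\|\le \psi_t(x)/c\to 0$.
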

\begin{proof}
This follows by noting that the difference $D_1 (\psi^2 + D_1^2)^{-1/2} -
D_1(\phi^2 + D_1^2)^{-1/2} \in C_0(M,\cK(H))$ of bounded transforms is a
compact operator. Indeed, we have that the difference
\begin{equation}\label{eq:pointdiff}
D_1(x)(\psi^2(x) + D_1(x)^2)^{-1/2} - D_1(x)(\phi^2(x) +
D_1^2(x))^{-1/2} \in \cK(H)
\end{equation}
is compact for all $x \in M$ since the function 
$t \mapsto t(\psi^2(x) + t^2)^{-1/2} - t(\phi^2(x) + t^2)^{-1/2}$ lies in $C_0(\rr)$. Furthermore by
\cite[Prop.~2.2]{Les:USF} and Remark \ref{r:DS}, 2. we get that the quantity in
\eqref{eq:pointdiff} depends continuously on the parameter $x \in M$. Finally,
the vanishing at infinity follows from \Aref{Ass3} and the spectral theorem for
unbounded selfadjoint operators.
\end{proof}

We can now make a sensible definition of spectral flow.

\begin{dfn}
By the \emph{spectral flow} of the family $\{D_1(x)\}_{x \in M}$ with respect to the
differential operator $D_2 : \Ga_c^\infty(M,F) \to L^2(M,F)$ we will
understand the integer given by the interior $KK$--product $[D_1]
\hot_{C_0(M)} [D_2] \in KK(\cc,\cc)$ under the identification $KK(\cc,\cc)
\cong \zz$. We will apply the notation $\SF(D_1,D_2) \in \zz$ for the
spectral flow.
\end{dfn}

We remark that the interior Kasparov product between odd $K$--theory and odd
$K$--homology can be identified with the index pairing \cite[Sec.~18.10]{Bla:KTO2Ed}.

In order to describe the above spectral flow as the index of an unbounded
Fredholm operator we need to construct a correspondence between the odd
unbounded Kasparov modules $(\psi^{-1} \cd D_1,C_0(M,H))$ and
$(D_2,L^2(M,F))$. To be able to deduce the commutator condition 
Def. \ref{def:Correspondence}, (4) for the pair $\psi\ii D_1,D_2$ from that for
the pair $D_1, D_2$ we have to assume additionally that $d \psi\ii$ 
is globally bounded. Let us first show that there are sufficiently many such functions.

\begin{lemma}\label{l:AppDistFunc}
Let $M$ be a complete Riemannian manifold, $K\subset M$ a compact subset.
Then there exists $\psi\in C^\infty(M)$ strictly positive and vanishing
at infinity such that
\begin{enumerate}
\item $\psi(x)=1$ for all $x\in K$,
\item $\sup_{x\in M} | d \psi\ii(x) |<\infty$.
\end{enumerate}
\end{lemma}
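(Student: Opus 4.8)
The plan is to construct $\psi$ in the form $\psi = (1+w)^{-1}$, where $w\in C^\infty(M)$ is a smoothed, globally Lipschitz version of the distance to $K$, arranged so that $w$ vanishes near $K$, tends to $\infty$ at infinity, and has globally bounded differential. First I would record the elementary properties of $u(x):=\dist(x,K)$: it is $1$--Lipschitz, nonnegative, satisfies $u^{-1}(0)=K$, and --- because $K$ is compact and $M$ is complete --- is proper, since each sublevel set $\{u\le R\}$ is closed and bounded (the latter because $K$ is compact), hence compact by Hopf--Rinow; in particular $u(x)\to\infty$ as $x\to\infty$. (If $K=\emptyset$ the statement is vacuous after replacing $K$ by a single point.)

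The heart of the argument, and the only step that is not a routine computation, is to pass from the merely Lipschitz function $u$ to a smooth function with a \emph{global} bound on its differential. Here I would invoke the standard mollification of Lipschitz functions on a complete Riemannian manifold --- a locally finite geodesically convex cover together with a subordinate partition of unity, in the same spirit as the construction of the cut-off functions $\chi_k$ in Note~\ref{n:AppUniCom}; cf.\ \cite{Wol:ESA}, \cite[p.~117]{LawMic:SG} --- to obtain $\tilde u\in C^\infty(M)$ with $\sup_M|\tilde u-u|\le\tfrac12$ and $\sup_M|d\tilde u|\le 2$. Then $\tilde u$ is bounded below and still proper, so $\tilde u(x)\to\infty$ as $x\to\infty$. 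To make the function identically constant near $K$ I would compose with a fixed nondecreasing $\phi\in C^\infty(\R)$ satisfying $\phi(t)=0$ for $t\le 2$, $\phi(t)=t$ for $t\ge 3$ and $0\le\phi'\le 2$, and set $w:=\phi(\tilde u)$. On the open neighbourhood $\{u<1\}$ of $K$ one has $\tilde u\le u+\tfrac12<\tfrac32\le 2$, so $w\equiv 0$ there; far out $\tilde u\ge 3$, so $w=\tilde u\to\infty$; and $|dw|=|\phi'(\tilde u)|\,|d\tilde u|\le 4$ everywhere.

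Finally I would put $\psi:=(1+w)^{-1}$ and verify the claims directly: since $1+w\ge 1$ everywhere, $\psi\in C^\infty(M)$ takes values in $(0,1]$ and is strictly positive; $\psi\equiv 1$ on the neighbourhood of $K$ where $w$ vanishes, which gives (1); $w(x)\to\infty$ forces $\psi(x)\to 0$ at infinity (and moreover $|d\psi|=(1+w)^{-2}|dw|\le 4(1+w)^{-2}\to 0$, so $\psi$ in fact lies in $C^1_0(M)$, which is what the application following the lemma actually needs); and $\psi^{-1}=1+w$ yields $d(\psi^{-1})=dw$, hence $\sup_{x\in M}|d\psi^{-1}(x)|=\sup_M|dw|\le 4<\infty$, which is (2). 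I expect the main obstacle to be the smoothing input itself: one must be a little careful that the gradient bound on $\tilde u$ is \emph{uniform over all of $M$} rather than merely locally controlled, which is exactly the point at which the global Lipschitz-ness of $u$ is used; everything else is bookkeeping.
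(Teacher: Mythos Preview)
Your proof is correct and follows essentially the same strategy as the paper: both construct $\psi^{-1}$ from a smooth approximation of a distance function with uniformly bounded differential (the paper uses $\dist(\cdot,x_0)$ and cites Gaffney \cite{Gaf:CPH}, you use $\dist(\cdot,K)$), then modify it to be identically $1$ near $K$ (the paper glues via a compactly supported cutoff $\varphi$, you compose with a smooth $\phi$ that vanishes on $(-\infty,2]$). Your additional observation that $\psi\in C^1_0(M)$ is a useful remark for the application that immediately follows the lemma.
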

\begin{proof} Fix $x_0\in M$. Let $\phi\in C^\infty(M)$ be a smooth
approximation of the distance function $\dist(\cd,x_0)$ \cite[Sec. 3]{Gaf:CPH}
in the sense that $|\phi(x)-dist(x,x_0)|\le 1$, $|d\phi(x)|\le 2$ for
all $x\in M$. Furthermore, let $\varphi\in C^\infty_c(M)$ be a compactly
supported cut--off function with $\varphi(x)=1$ for $x$ in a neighborhood of $K$.
Then $\psi\ii(x):= \varrho(x)+(1-\varrho(x))(2+ \phi(x))$ does the job.
\end{proof}

%
% We start by recalling that the operator $*$--module $C^1_0(M,H)$ of
% $H$--valued $C^1$--functions which vanish at infinity together with the
% isomorphism
% \[
% \arr{ccc}{
% C^1_0(M,H) \cong H_{C^1_0(M)} & & f \mapsto \{\inn{e_i,f}\}
% }
% \]
% define a $C^1$--structure on the $C^*$--module $C_0(M,H)$. Furthermore, we
% note that the composition $\si_{D_2} \ci d$ of the exterior derivative and the
% symbol of $D_2$ can be identified with the Gra{\ss}mann $\de$--connection
% associated with the above $C^1$--structure and the derivation $\de : C^1_0(M)
% \to \B(L^2(M,F))$, $\de = [D_2,\cd]$. See Section \ref{exdirschr}.

\begin{prop} Let $\psi\in C^1_0(M)$ be a strictly positive $C^1$--function
vanishing at infinity such that $\psi\ii$ satisfies conditions
$\textup{(1)}$ and $\textup{(2)}$ of Lemma \plref{l:AppDistFunc}. Then
the pair $(C^1_0(M,H),\Gc_{D_2})$ given by the operator $*$--module
$C^1_0(M,H)$ over the operator $*$--algebra $C^1_0(M)$ and the Gra{\ss}mann
$D_2$--connection is a correspondence from $(\psi^{-1}D_1,C_0(M,H))$ to
$(D_2,L^2(M,F))$.
\end{prop}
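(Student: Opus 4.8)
The plan is to verify the four conditions of Definition~\ref{def:Correspondence} with $B_1=C^1_0(M)$, $X_1=C^1_0(M,H)$, $B=C_0(M)$, $X=C_0(M,H)$, $Y=L^2(M,F)$ and $\Na^0=\Gc_{D_2}$, the first Kasparov module being $(\psi^{-1}D_1,C_0(M,H))$ over $\cc$--$C_0(M)$ and the second $(D_2,L^2(M,F))$ over $C_0(M)$--$\cc$; note that $(D_2,L^2(M,F))$ is essential, since the action of $C_0(M)$ on $L^2(M,F)$ is nondegenerate and the left action on $\gO^1_{D_2}$ is essential as recorded in Subsection~\ref{geomdelta}. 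Conditions (1)--(3) are soft. For (1): $C^1_0(M)\su C_0(M)$ is a dense $*$--subalgebra with completely bounded inclusion (Subsection~\ref{ss:GESA}), the analogue for $C^1_0(M,H)\su C_0(M,H)$ is immediate, $C^1_0(M)$ is $\si$--unital by completeness of $M$ (Note~\ref{n:AppUniCom}), $C^1_0(M,H)$ is an operator $*$--module over $C^1_0(M)$ by Proposition~\ref{conemodu}, and $\Gc_{D_2}$ in \eqref{eq:GEGraCon} is a completely bounded hermitian $D_2$--connection by Subsection~\ref{geomdelta}. For (2): multiplication by $f\in C^1_0(M)$ preserves $\sD(D_2)$ with $[D_2,f]=\si_{D_2}(df)$, and $[D_2,\cd]\colon C^1_0(M)\to\B(L^2(M,F))$ is completely bounded by Corollary~\ref{cor:D2CB}. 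Condition (3) is vacuous, since the coefficient algebra of the first module is $\cc$, so its dense $*$--subalgebra $\sA$ consists of scalars and $[1\ot_{\Gc}D_2,a]=0$.

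The real content is condition (4), i.e.\ conditions (1) and (2) of Theorem~\ref{t:KaaLes1} for $S:=\psi^{-1}D_1\ot 1$ and $T:=1\ot_{\Gc}D_2=D_2$ on $L^2(M,H\ot F)$ (via the identifications \eqref{eq:Ident}); by the remark after Definition~\ref{def:Correspondence} it suffices to test these on the core $\sE=\Ga_c^\infty(M,H\ot F)$. First I would establish the domain inclusions $(S-i\mu)^{-1}\sE\su\sD(S)\cap\sD(D_2)$ and $D_2(S-i\mu)^{-1}\sE\su\sD(S)$ by repeating, essentially verbatim, the local integration--by--parts computation of Lemma~\ref{domlocal} for the parametrized family $\{\psi^{-1}(x)D_1(x)\}_{x\in M}$ in place of $\{D_1(x)\}$: this family is again weakly differentiable with weak derivative $d(\psi^{-1}D_1)(x)=d\psi^{-1}(x)\ot D_1(x)+\psi^{-1}(x)\,d(D_1)(x)$, and since $S$ is the (fibrewise) multiplication operator associated to the family, $(S-i\mu)^{-1}$ preserves compact supports and maps into $\sD(S)\su L^2(M,W\ot F)$. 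The same computation produces, for $s\in\sE$, the formula
\[
[S,D_2](S-i\mu)^{-1}=-\,\si_{D_2}\bigl(d(\psi^{-1}D_1)(\cd)\bigr)(S-i\mu)^{-1},
\]
whose right--hand side, by the product rule for the weak derivative, is the sum of $-\si_{D_2}\bigl(d\psi^{-1}\ot D_1(\cd)\bigr)(S-i\mu)^{-1}$ and $-\si_{D_2}\bigl(\psi^{-1}\,d(D_1)(\cd)\bigr)(S-i\mu)^{-1}$. It then remains to bound these two terms uniformly on $\sE$.

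The first term is harmless: fibrewise it equals $\si_{D_2}(d\psi^{-1}(x))\bigl(D_1(x)(\psi^{-1}(x)D_1(x)-i\mu)^{-1}s(x)\bigr)$, and since $D_1(x)(\psi^{-1}(x)D_1(x)-i\mu)^{-1}=\psi(x)\bigl(1+i\mu(\psi^{-1}(x)D_1(x)-i\mu)^{-1}\bigr)$ has operator norm $\le 2\psi(x)\le 2\|\psi\|_\infty$, the assumed global bound on $d\psi^{-1}$ (condition~(2) of Lemma~\ref{l:AppDistFunc}) together with \eqref{eq:BPS} bounds this term. The \textbf{main obstacle} is the second term, where the \emph{unbounded} function $\psi^{-1}$ sits on the left---this is exactly where the naive analogue of Theorem~\ref{t:DSSA} fails, since the family $\psi^{-1}D_1$ does not have uniformly equivalent graph norms. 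Commuting the scalar $\psi^{-1}$ past $\si_{D_2}$ and $d(D_1)(\cd)$ and using the fibrewise identity $\psi^{-1}(x)(\psi^{-1}(x)D_1(x)-i\mu)^{-1}=(D_1(x)-i\mu\psi(x))^{-1}$, the second term becomes fibrewise $\si_{D_2}(d(D_1)(x))(D_1(x)-i\mu\psi(x))^{-1}s(x)$; by \eqref{eq:BPS}, \AssA\ and \eqref{eq:Ass2} it suffices to bound $\sup_{x\in M}\|(D_1(x)-i\mu\psi(x))^{-1}\|_{\B(H,W)}$, and this is precisely where \Aref{Ass3} enters. For $x\notin K$ the real spectrum of $D_1(x)$ lies at distance $\ge c$ from $i\mu\psi(x)$, while for $x$ in the compact set $K$ one has $\psi(x)\ge\min_{x\in K}\psi(x)>0$; in both cases $\|(D_1(x)-i\mu\psi(x))^{-1}\|_{\B(H)}$ is uniformly bounded, and combined with the elementary bound $\|D_1(x)(D_1(x)-i\mu\psi(x))^{-1}\|\le 2$ and \eqref{eq:Ass2} this yields the required uniform bound in $\B(H,W)$. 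Hence $[S,D_2](S-i\mu)^{-1}$ extends to a bounded operator for every $\mu\in\Rstar$, which establishes condition (4) and therefore that $(C^1_0(M,H),\Gc_{D_2})$ is a correspondence from $(\psi^{-1}D_1,C_0(M,H))$ to $(D_2,L^2(M,F))$.
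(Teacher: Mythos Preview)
Your proof is correct and follows the same route as the paper: split the commutator $[\psi^{-1}D_1,D_2]$ via the product rule into a $d\psi^{-1}$--term and a $\psi^{-1}[D_1,D_2]$--term, then bound each piece against the resolvent using bounded propagation speed, the global bound on $d\psi^{-1}$, and the computation behind Theorem~\ref{t:DSSA}. Your write-up is in fact more careful than the paper's sketch on two points: you work with the correct resolvent $(\psi^{-1}D_1-i\mu)^{-1}$ throughout (the paper's displayed formula has $(D_1-i\mu)^{-1}$, which looks like a slip), and you make explicit that the invertibility assumption \Aref{Ass3} is exactly what supplies the uniform bound on $\|(D_1(x)-i\mu\psi(x))^{-1}\|_{\B(H,W)}$ when $\psi(x)\to 0$ outside $K$---a step the paper leaves to the reader.
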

\begin{proof}
Lemma \ref{l:AppDistFunc} ensures that $\psi$ with the desired properties exists.
Then the result is mainly a consequence of Theorem \ref{t:extright}, Lemma
\ref{domlocal} and the proof of Theorem \ref{t:DSSA}.

Note that
\begin{equation}
   [\psi\ii D_1, D_2] = -\sigma_{D_2}(d \psi\ii) D_1 + \psi\ii [D_1, D_2].
\end{equation}
Bounded propagation speed \Eqref{eq:BPS}, the global boundedness
of $d\psi\ii$ and the proof of Theorem \ref{t:DSSA} now show that
$[\psi\ii D_1(\cd), D_2]( D_1(\cd) - i \cd\mu )\ii$ 
extends to a bounded operator.
\end{proof}

The main result of this section now follows from Theorem
\ref{t:prodcoincide}. See also Theorem \ref{t:DSSA}.\mpar{CHECK crossrefs}

\begin{cor}
Under the assumptions of the previous Proposition 
the Dirac--Schr\"odinger operator
$\psi^{-1}D_1(\cd) + i D_2 : \sD(\psi^{-1}D_1(\cd)) \cap \sD(D_2) \to L^2(M,H \ot F)$
is an unbounded Fredholm operator and the index
\[
\ind(\psi^{-1} D_1(\cd) + i D_2) = \SF(D_1,D_2)
\]
coincides with the spectral flow of the family $\{D_1(x)\}$ with respect to
the differential operator $D_2$.
\end{cor}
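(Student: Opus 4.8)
The plan is to deduce the corollary from Theorem~\ref{t:prodcoincide}, applied to the correspondence supplied by the Proposition above. Concretely, take $A = \cc$, $B = C_0(M)$, $C = \cc$; let $(X,D_1)$ be the odd unbounded Kasparov $\cc$--$C_0(M)$ module $(C_0(M,H),\psi^{-1}D_1)$, let $(Y,D_2)$ be the odd unbounded Kasparov $C_0(M)$--$\cc$ module $(L^2(M,F),D_2)$, and let $(X_1,\Na^0) = (C^1_0(M,H),\Gc_{D_2})$ be the correspondence from the preceding Proposition. Choosing the hermitian $D_2$--connection $\Na := \Gc_{D_2}$, Theorem~\ref{t:prodcoincide} then says that the even unbounded Kasparov $\cc$--$\cc$ module
\[
\bigl((C_0(M,H)\hot_{C_0(M)}L^2(M,F))^2,\ \psi^{-1}D_1 \ti_{\Gc} D_2\bigr)
\]
represents the Kasparov product $[D_1]\hot_{C_0(M)}[D_2]$.

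First I would identify this abstract module with the Dirac--Schr\"odinger data of the statement. By the chain of identifications in \eqref{eq:Ident} one has $C_0(M,H)\hot_{C_0(M)}L^2(M,F)\cong L^2(M,H\ot F)$, and the explicit formula of Theorem~\ref{t:extright} shows that under this identification $1\ot_{\Gc}D_2 = D_2$ (acting on the $F$--factor), while $\psi^{-1}D_1\ot 1$ becomes the pointwise operator $\psi^{-1}D_1(\cd)$. Hence
\[
\psi^{-1}D_1 \ti_{\Gc} D_2 = \pmat{ 0 & \psi^{-1}D_1(\cd) - iD_2 \\ \psi^{-1}D_1(\cd) + iD_2 & 0 }
\]
on $L^2(M,H\ot F)^2$ with domain $\bigl(\sD(\psi^{-1}D_1(\cd))\cap\sD(D_2)\bigr)^2$; this is the computation of Theorem~\ref{t:DSSA} verbatim, with $D_1(\cd)$ replaced by $\psi^{-1}D_1(\cd)$, the commutator hypothesis needed to invoke Theorem~\ref{t:regself} being exactly condition~(4) of Definition~\ref{def:Correspondence}, which the preceding Proposition supplied.

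Finally I would unwind the $KK$--theory. Since $\bigl(L^2(M,H\ot F)^2,\psi^{-1}D_1\ti_{\Gc}D_2\bigr)$ is an \emph{even} unbounded Kasparov $\cc$--$\cc$ module, the anti--diagonal operator is selfadjoint with compact resolvent, hence its off--diagonal block $\psi^{-1}D_1(\cd) + iD_2$ is a closed, densely defined Fredholm operator; its bounded transform has the same kernel, so under the canonical isomorphism $KK(\cc,\cc)\cong\zz$ the class of our even Kasparov module is $\dim\ker(\psi^{-1}D_1(\cd)+iD_2)-\dim\ker(\psi^{-1}D_1(\cd)-iD_2) = \ind(\psi^{-1}D_1(\cd)+iD_2)$. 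On the other hand, by Theorem~\ref{t:prodcoincide} that class is $[D_1]\hot_{C_0(M)}[D_2]$, which is by definition $\SF(D_1,D_2)$. Comparing the two descriptions gives $\ind(\psi^{-1}D_1(\cd)+iD_2) = \SF(D_1,D_2)$. Essentially all the substance is already contained in Theorem~\ref{t:prodcoincide} and in the preceding Proposition; the only new points are the bookkeeping identifications of the second paragraph together with the elementary fact that an even unbounded $\cc$--$\cc$ Kasparov module \emph{is} a Fredholm operator whose index is the integer attached to its $KK$--class, so I do not anticipate a real obstacle --- only a modicum of care with the routine polar--decomposition argument that passing to the bounded transform does not change the index.
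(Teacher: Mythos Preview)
Your proposal is correct and follows essentially the same approach as the paper, which merely states that the result follows from Theorem~\ref{t:prodcoincide} together with Theorem~\ref{t:DSSA}. You have simply spelled out the identifications and the standard fact that an even unbounded $\cc$--$\cc$ Kasparov module yields a Fredholm operator whose index is the associated class in $KK(\cc,\cc)\cong\zz$; there is nothing to add.
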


\subsubsection{Proof of Theorem \ref{I:ThmB}} Finally, we make the link
to the index of Dirac--Schr\"odinger operators as in \cite[Prop. 1.4]{Ang:ICO}.
In loc.~cit. operators of the form $D+i\gl A$, with $D$ being a Dirac-type 
operator and $A$ being a selfadjoint bundle homomorphism, are considered.
$D$ corresponds to our $D_2$ and $A(x)$ corresponds to our $D_1(x)$ in
the special case of a finite-dimensional Hilbert space $H$. Then under assumptions
which are similar but slightly stronger than
our \Aref{Ass1}, \Aref{Ass2}, \Aref{Ass3} it is proved that $D+i \gl A$
is Fredholm for $\gl$ large enough.

In fact, it is not hard to see that under our assumptions
\Aref{Ass1}, \Aref{Ass2}, \Aref{Ass3} it follows from \cite[Lemma 7.6]{KaaLes:LGP}
that given $C>0$ there exists a $\gl_0=\gl_0(C)>0$ large enough such that
the operator $\phi D_1(\cd)+i D_2$ is Fredholm for any $C^1$-function
$\phi\in C^1(M)$ which satisfies $\sup_{x\in M} |d\phi(x)|\le C$ and
$\phi(x)\ge \gl_0$ for $x\in M\setminus K$.

It then follows from the stability of the Fredholm index under
deformations in the graph topology, \textit{cf.,~e.g.,}~\cite{CorLab:IIM},
that for any such $\phi$ the index of $\phi D_1(\cd) + i D_2$ coincides
with the spectral flow $\SF(D_1,D_2)$. This argument in particular
applies to the function $\phi\equiv\gl$ for $\gl\ge \gl_0(1)$.
% End of Sec 8 %}}}
%% End of Mainmatter %%}}}
%% BACKMATTER %%
\bibliography{mlbib.bib,localbib.bib} %%%%%%%%%%%%%%%%%%%%%%%%%%%%{{{
\bibliographystyle{amsalpha-lmp}
\end{document}